\tikzset{
  symbol/.style={
    draw=none,
    every to/.append style={
      edge node={node [sloped, allow upside down,
      auto=false]{$#1$}}}
  }
}
\newtheorem{thm}{Theorem}[section]
\newtheorem{prop}[thm]{Proposition}
\newtheorem{defn-thm}[thm]{Theorem-Definition}
\newtheorem{defn-lem}[thm]{Lemma-Definition}
\newtheorem{cor}[thm]{Corollary}
\newtheorem{lem}[thm]{Lemma}
\newtheorem*{thm*}{Theorem}
\newtheorem*{prop*}{Proposition}
\newtheorem*{cor*}{Corollary}
\newtheorem*{lem*}{Lemma}
\theoremstyle{definition}
\newtheorem{rem}[thm]{Remark}
\newtheorem*{claim*}{Claim}
\newtheorem*{rem*}{Remark}
\newcommand{\su}{\subset}
\newcommand{\cx}{{\breve{x}}}
\newcommand{\cy}{{\breve{y}}}
\newcommand{\cz}{{\breve{z}}}
\newcommand{\ttto}{\dashrightarrow} 
\newcommand{\tto}{\longrightarrow}
\newcommand{\wt}{\widetilde}
\newcommand{\ov}{\overline}
\newcommand{\Z}{\mathbb{Z}}
\newcommand{\PP}{\mathbb{P}}
\newcommand{\AAA}{\mathbb{A}}
\newcommand{\OO}{\mathcal{O}}
\newcommand{\pp}{\mathfrak{p}}
\newcommand{\fq}{\mathfrak{q}}
\newcommand{\al}{\alpha}
\newcommand{\be}{\beta}
\newcommand{\te}{\theta}
\newcommand{\ga}{\gamma}
\newcommand{\de}{\delta}
\newcommand{\sig}{\sigma}
\newcommand{\ka}{\kappa}
\newcommand{\lam}{\lambda}
\newcommand{\eps}{\varepsilon}
\DeclareMathOperator{\Spec}{Spec}
\DeclareMathOperator{\divv}{div}
\begin{document}

\title[Fibrations by non-smooth projective curves]{Fibrations by plane projective rational quartic \\ curves in characteristic two}

\author{Cesar Hilario}
\address{Mathematisches Institut, Heinrich-Heine-Universit\"at, 40204 D\"usseldorf, Germany}
\email{cesar.hilario.pm@gmail.com}

\author{Karl-Otto Stöhr}
\address{IMPA, Estrada Dona Castorina 110, 22460-320 Rio de Janeiro, Brazil}
\email{stohr@impa.br}

\subjclass[2010]{14G17, 14G05, 14H05, 14H45, 14D06, 14E05}


\dedicatory{October 10, 2025}


\begin{abstract}
We give a complete classification, up to birational equivalence, of all fibrations by plane projective rational quartic curves in characteristic two.
\end{abstract}

\maketitle

\section{Introduction}

\setcounter{tocdepth}{1}

In this paper we investigate in characteristic $p=2$ the birational geometry of fibrations by rational curves of degree $d=4$ in the projective plane.

The case of degree $d=2$ corresponds to \emph{conic bundles}, which have a long and rich history that goes back to work by the Italian school (e.g., the theory of ruled surfaces), and to more recent work related to the birational geometry of complex threefolds 
(see the expository article \cite{Pro18}).
Over the past few years, conic bundles have also been studied in positive characteristic
(see \cite{JVV24} and the references therein). 

If the fibres are of degree $d>2$, then their arithmetic genus $g = (d-1)(d-2)/2$ is larger than their geometric genus $\ov g = 0$, and so they admit singularities.
By Bertini's theorem this can only happen in characteristic $p>0$.
More precisely, 
as follows from Tate's genus change formula \cite{Tate52},
the prime $p$ must be equal to $m+1$ where $m$ is a divisor of the integer $2 (g - \ov g) = (d-1)(d-2)$.

If $d=3$ then $p\in\{2,3\}$, and we obtain the so-called \emph{quasi-elliptic fibrations}, which are fibrations whose general fibres are plane cubic curves with a cusp.
Equivalently, the generic fibre $C=f^{-1}(\eta) = T_\eta$ of a quasi-elliptic fibration $f:T\to B$ is a \emph{quasi-elliptic curve} over the function field $K=k(B)$ of the base $B$, i.e., $C$ is a regular proper geometrically integral curve of arithmetic genus $g=1$ over $K$.
Quasi-elliptic fibrations play a key role in the extension of the Enriques classification of complex algebraic surfaces to positive characteristics, accomplished by Bombieri and Mumford \cite{BM76,BM77}.

If $d=4$, then $p\in\{2,3,7\}$.
The cases $p=3$ and $p=7$ were investigated by Salomão \cite{Sal11,Sal14} and the second author \cite{St04}.
In the present paper we give a birational classification of the case $p=2$.
In other words, we classify, up to birational equivalence, the fibrations by plane projective rational quartic curves in characteristic $p=2$.
Our main result asserts that the generic fibre $C=T_\eta$ of such a fibration $T \to B$, which is a curve over the function field $K=k(B)$, falls into one of five disjoint classes of curves.




\begin{thm} \label{2024_06_01_23:20}
Let $C$ be a regular proper non-hyperelliptic geometrically rational curve over a field $K$ of characteristic $p=2$.
Assume that $C$ has arithmetic genus $h^1(\OO_C)=3$.
Then $C$ is isomorphic to a plane projective quartic curve over $K$ defined by one of the following equations
\begin{enumerate}[\upshape (i)]
    \item \label{2024_06_01_23:21}
    $y^4 + a z^4 + x z^3 + b x^2 z^2 + c x^4 = 0$, \\
    where $a,b,c \in K$ are constants satisfying $c \notin K^2$;
    
    \item \label{2024_06_01_23:22}
    $ y^4 + a z^4 + b x^2 y^2 + c x^2 z^2 + b x^3 z + d x^4 = 0$, \\
    where $a,b,c,d \in K$ are constants satisfying $a\notin K^2$ and $b\neq 0$;
    
    \item \label{2024_06_01_23:23}
    $b y^4 + d z^4 + y^2 z^2 + x z^3 + (b + b^2 c^3) x^2 z^2 + a x^2 y^2 + a x^3 z + (a b^2 c^3 + a^2 d) x^4 = 0$, \\
    where $a,b,c,d\in K$ are constants satisfying $a\notin K^2$ and $b,c\neq0$;
    
    \item \label{2024_06_01_23:24}
    $y^4 + a z^4 + x z^3 + b x^3 z + c x^4 = 0$, \\
    where $a,b,c \in K$ are constants satisfying $b \notin K^2$;

    \item \label{2024_06_01_23:25}
    $ y^4 + d z^2 y^2 + (c+a) z^4 + d x z^3 + bd\, x^2 y^2 + x^2 z^2 + bd\, x^3 z + b^2 c x^4 = 0 $, \\
    where $a,b,c,d\in K$ are constants satisfying $a,b\notin K^2$ and $d\neq 0$.
\end{enumerate}
Conversely, each of these equations defines a curve of the above type.
\end{thm}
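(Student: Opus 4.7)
Since $C$ is non-hyperelliptic of arithmetic genus $3$, the canonical sheaf $\omega_C$ has degree $4$ and $h^0(\omega_C)=3$, and the standard argument (the canonical map is very ample on a non-hyperelliptic curve) realises $C$ as a regular plane quartic in $\PP^2_K$. So the problem reduces to writing down, up to $PGL_3(K)$-equivalence, a normal form for every ternary quartic form over $K$ whose zero locus is regular, geometrically integral, and geometrically rational. My plan is to fix such an equation, analyse the singular points of $C_{\ov K}=C\otimes_K\ov K$ (which account for the drop from arithmetic genus $3$ to geometric genus $0$), and choose coordinates adapted to them.

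First I would observe that each closed singular point $Q$ of $C_{\ov K}$ lies over a closed point $P$ of $C$ whose residue field $k(P)$ is purely inseparable over $K$ (otherwise $\Gal(\ov K/K)$ would carry $Q$ to another singular point over the same $K$-regular point of $C$, contradicting regularity). Writing $\delta_Q$ for the local $\delta$-invariant and grouping geometric points over $P$, the relation
\[
\sum_{P}[k(P):K]_s\cdot\delta_Q=3
\]
with $[k(P):K]_s=1$ forces the singular scheme to have $K$-length $3$ in total, distributed over one, two, or three closed points of $C$; moreover, since the residue extensions are purely inseparable of degree a power of $2$, each such point is unibranch. A short enumeration of the partitions of $3$, combined with the fact that a plane quartic containing three collinear unibranch singular points or a singular point of multiplicity $\geq 3$ is not integral, narrows the list of admissible singular configurations. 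This enumeration is where the five cases will appear.

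Next, for each admissible configuration I would choose projective coordinates $x,y,z$ so that the singular locus sits in a distinguished position (typically the point $(1{:}0{:}0)$, or $(1{:}0{:}0)$ together with $(0{:}1{:}0)$, or an infinitely-near configuration supported at $(1{:}0{:}0)$), and then exploit char $p=2$: the relevant derivatives $\partial/\partial y$ and $\partial/\partial z$ must vanish at the singular $\ov K$-points, so only terms of even degree in $y$ and $z$ can produce an obstruction to smoothness, which explains the shape of each normal form (monomials $y^4,z^4,y^2z^2,x^2y^2,x^2z^2$, together with the ``odd'' perturbations $xz^3$, $x^3z$ needed to cut down the tangent direction). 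Using the remaining freedom in $PGL_3(K)$ (affine changes fixing the singular locus plus scalings) I would clear inessential coefficients to arrive exactly at the five displayed equations, and read off the genericity conditions ($c\notin K^2$, $a\notin K^2$, $b\notin K^2$, $b\neq 0$, etc.) as the precise conditions that prevent the defining polynomial from being a $p$-th power in each subfield and hence keep $C$ geometrically integral.

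The main obstacle, in my view, is the coordinate-normalisation step: there are many potential redundancies among the five families, and ensuring that the classes are pairwise disjoint and that the genericity hypotheses are sharp requires tracking every residual automorphism carefully, particularly in case (\ref{2024_06_01_23:23}) and case (\ref{2024_06_01_23:25}) where multiple non-square parameters interact. Conversely, to finish, one must verify for each of the five equations that the stated conditions do force $C$ to be regular, geometrically integral and geometrically rational of arithmetic genus $3$; this is a direct (if lengthy) Jacobian-and-normalisation computation in characteristic $2$, using that a plane quartic with singularities of total $\delta$-invariant $3$ and geometric genus $0$ is automatically geometrically rational.
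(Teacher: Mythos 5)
Your proposal diverges fundamentally from the paper's method, and the divergence exposes two genuine gaps. First, the uniqueness and structure of the non-smooth locus cannot be obtained by ``a short enumeration of the partitions of $3$'': the paper obtains it from the Frobenius-pullback filtration $F \supset F_1 \supset F_2 \supset \cdots$ of the function field. Since $C$ is non-hyperelliptic, the genus of $F_1 = K^2F$ must be $1$ (if it were $0$, then $F_1$ would be a genus-zero quadratic subfield and $C$ hyperelliptic), and Rosenlicht's genus drop formula together with the structure results of \cite{HiSt22} then forces a \emph{single} non-decomposed singular prime $\pp$ with $\de(\pp)=3$, $\de(\pp_1)=1$ and $\de(\pp_n)=0$ for $n\geq 2$. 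Your collinearity/multiplicity argument does not rule out, say, two non-smooth closed points with geometric singularity degrees $2$ and $1$, and nothing in your setup sees the constraint coming from $g_1\in\{0,1\}$.

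Second, and more seriously, the five families are \emph{not} distinguished by the singular configuration of $C_{\ov K}$ in $\PP^2(\ov K)$: in all five cases the geometric picture is essentially the same --- a unique unibranch strange singular point of multiplicity $2$ or $3$ --- so the case division you propose cannot produce the theorem's five classes. The actual dichotomies are arithmetic: whether the divisor $\pp$ is canonical, whether $\pp_2$ is a rational point of $C_2$, and whether the pseudocanonical field equals $K(C_2)$. Likewise, the coefficient conditions ($c\notin K^2$, $a\notin K^2$, $b\notin K^2$, \dots) are not conditions preventing the defining form from being a $p$-th power; they are the conditions that make $C$ \emph{regular} at its non-smooth point (the residue field of $\pp$ must be a nontrivial purely inseparable extension of $K$) while keeping the whole singularity degree $3$ concentrated at one prime. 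Deriving the exact identities among the coefficients --- e.g.\ the terms $(b+b^2c^3)x^2z^2$ and $(ab^2c^3+a^2d)x^4$ in case~\ref{2024_06_01_23:23} --- requires building $F$ up from the rational third Frobenius pullback $F_3$ by Riemann--Roch and the Bedoya--St\"ohr computation of singularity degrees at each stage (or an equivalent local analysis); it will not fall out of clearing coefficients by $PGL_3(K)$ after placing the singular point in a standard position.
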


The first two classes of curves were studied in our previous article \cite{HiSt23}, and in this paper we determine the remaining three (see Theorem~\ref{2024_05_27_00:10}).
To complete the classification we decide when two curves in the same class are isomorphic
(see \cite[Propositions~3.5 and~4.5]{HiSt23} and Proposition~\ref{2024_07_22_21:20}), 
and as a by-product obtain that in the cases~\ref{2024_06_01_23:22}, \ref{2024_06_01_23:23} and~\ref{2024_06_01_23:25} the polynomial expressions $a b^2 + c^2$, $b c^3$ and $a b^2 d^2$ are invariants of the curve $C$, respectively.

Furthermore, we show that each family of curves is distinguished by three intrinsic properties, as documented in Table~\ref{2025_04_03_02:45}.
In this table $\pp$ denotes the only non-smooth point on the regular curve $C$ (see Section~\ref{2024_04_05_18:25} for details), which, viewed as a Weil divisor on $C$, can be canonical (\ref{2024_06_01_23:21} and~\ref{2024_06_01_23:22}) or non-canonical (\ref{2024_06_01_23:23}, \ref{2024_06_01_23:24} and~\ref{2024_06_01_23:25}).
For each $n\geq 0$ we denote by $\pp_n$ the image of $\pp$ in the regular curve $C_n|K$, which is defined as the normalization of the $n$-th iterated Frobenius pullback $C^{(p^n)}|K$ of $C|K$.
Note that there is an infinite chain of relative Frobenius morphisms over $K$
\[ C_0 = C \to C_1 \to C_2 \to C_3 \to \cdots. \]
In all five cases the image point $\pp_n$ is non-smooth for $n=1$ and smooth for $n \geq 2$. By the main theorem in \cite{HiSt22}, the smooth point $\pp_n$ is actually rational if $n \geq 3$.
But $\pp_2$ is rational only in cases~\ref{2024_06_01_23:21} and~\ref{2024_06_01_23:23}.

\addtolength{\tabcolsep}{6pt}
\begin{table}[h]
\begin{center}
\begin{tabular}{c c c c}
    \hline
    & the divisor $\pp$ is canonical & the point $\pp_2$ is $K$-rational & $E=K(C_2)$ \\ \hline
    (i) & Yes & Yes & Yes \\
    (ii) & Yes & No & No \\
    (iii) & No & Yes & No \\ 
    (iv) & No & No & Yes \\
    (v) & No & No & No \\
    \hline
\end{tabular}
\vspace{\abovecaptionskip}
\caption{Comparison of intrinsic properties of the five classes of curves.}
\label{2025_04_03_02:45}
\end{center}
\end{table}

The first two columns in Table~\ref{2025_04_03_02:45} do not provide a distinction between the last two classes of curves.
This motivates us to introduce and study a second canonical field of the regular curve $C|K$.
Recall that the \emph{canonical field} of $C|K$ is the subfield of $F=K(C)$ generated over $K$ by the quotients of all non-zero holomorphic differentials of $C$.
Since $C$ is non-hyperelliptic, this field coincides with $F$ in all five cases.
We define the \emph{pseudocanonical field} of $C|K$ as the subfield $E$ of $F$ generated over $K$ by the quotients of all non-zero \emph{exact} holomorphic differentials of $C$.
We show that the field extension $E\su F$ has degree $4=p^2$ in all cases, and that it is purely inseparable, i.e., $E=K(C_2)$, only in cases~\ref{2024_06_01_23:21} and~\ref{2024_06_01_23:24}.

The curves $C|K$ in the theorem exhibit the following interesting properties: the normalized Frobenius pullback $C_n|K$ of $C|K$ is a rational curve for $n \geq 3$, a smooth curve of genus zero for $n=2$, and a quasi-elliptic curve for $n=1$. 
In light of the purely inseparable Frobenius map $C \to C_1$, the latter implies that in characteristic $p=2$ every fibration by plane projective rational quartic curves arises as a degree $p$ inseparable cover of a quasi-elliptic fibration (see also Corollary~\ref{2025_10_10_16:30}).
This is a unique feature of geometry in characteristic $p=2$, for in characteristic $p > 2$ the normalized Frobenius pullback $X_1$ of any regular curve $X$ of genus $h^1(\OO_X) = 3$ is smooth (see \cite[Corollary~2.7]{HiSt22}), and therefore not quasi-elliptic.

To prove our results we work in the arithmetic setting of function field theory. 
The proof of Theorem~\ref{2024_06_01_23:20} goes as follows:
first we determine a presentation of the function field $F|K = K(C)|K$ of the regular curve $C|K$, and secondly we find a realization of $C$ as a plane curve of degree $2g-2=4$ in $\PP^{g-1}(K) = \PP^2(K)$, 
where $g=3$ is the arithmetic genus of $C$, 
through the sections of a canonical divisor.
The determination of a presentation of $F|K$ is based on the Riemann--Roch theorem, the Bedoya--Stöhr algorithm \cite{BedSt87} and the main theorem in \cite{HiSt22}.
If the only non-smooth point $\pp$ on $C$ is a canonical divisor, then its sections provide the presentation of $F|K$ and the realization of $C$ as a plane quartic curve over $K$ (see \cite{HiSt23}).


In this paper we focus on the much harder case where the divisor $\pp$ is not canonical.
Here the presentation of $F|K$ has to be obtained by looking at the Riemann--Roch spaces $H^0(\pp^r)$ of the powers $\pp^r$ of $\pp$, since $\pp$ itself does not have enough sections (see the proofs of Theorems~\ref{2024_02_18_00:35} and~\ref{2021_05_21_18:40}).
In addition, since $\pp$ is not canonical the realization of $C$ as a plane quartic curve requires the prior determination of a canonical divisor, whose sections fullfil such a realization. We perform this task by using differentials (see Section~\ref{2024_04_05_18:30}).

If $C|K$ is the generic fibre of a fibration $T\to B$, then the behaviour of most special fibres is governed by the \emph{geometric generic fibre} $C_{\ov K} = C \otimes_K \ov K$.
This is a rational plane quartic curve over the algebraic closure $\ov K = \ov {k(B)}$, with a unique singular point that is unibranch and lies over the non-smooth point $\pp \in C$. The quartic curve $C_{\ov K}$ is strange, i.e., all its tangent lines meet in a common intersection point, and it has the remarkable property that its tangent lines  are either all bitangents (\ref{2024_06_01_23:22}, \ref{2024_06_01_23:23} and~\ref{2024_06_01_23:25}) or all non-ordinary inflection tangents (\ref{2024_06_01_23:21} and~\ref{2024_06_01_23:24}).

The explicit description in Theorem~\ref{2024_06_01_23:20} allows us to construct five fibrations that are universal in the sense that any fibration $T\to B$ by plane projective rational quartic curves is obtained, up to birational equivalence, from one of them by a base extension (see \cite[Theorems~5.1 and~5.2]{HiSt23} and Theorem~\ref{2024_05_29_21:05}). 
We prove that the total spaces of these fibrations are uniruled, and more generally, that the total space of any fibration by (possibly singular) rational curves is uniruled (see Proposition~\ref{2024_09_26_19:35}).

If the base of the fibration is one-dimensional then we obtain a smooth surface $S$ together with a proper surjective morphism $S\to B$ to a curve $B$, such that almost every fibre is a plane rational quartic curve with a unique singular point.
This is reminiscent of the theory of elliptic surfaces \cite{SchSh10}, where almost all fibres are elliptic curves, or of the theory of quasi-elliptic surfaces \cite{BM76,Lan79}, where almost all fibres are plane cuspidal cubic curves.
The fibres of $S\to B$ that are integral are finite in number, and we call them the \emph{bad fibres} of the fibration.
\footnote{Following Kodaira's classification of singular fibres on elliptic surfaces, one may be tempted to call them \emph{singular fibres}, but this may be misleading because in this paper each fibre has singularities.}
Mirroring the theory of elliptic surfaces, it is natural to restrict to surfaces $S$ that are \emph{relatively minimal} over $B$, in the sense that no bad fibre contains smooth rational curves of self-intersection $-1$.

Since the genus $h^1(C)=3$ of the generic fibre $C=S_\eta$ is positive,
a theorem of Lichtenbaum and Shafarevich (see \cite[Theorem~4.4]{Lic68}, \cite[p.\,155]{Sha66}, or \cite[p.\,422]{Liu02}) guarantees that the fibration $S\to B$, and therefore the fibres, are uniquely determined by the generic fibre $C|K$.
However, this provides no information on the types of bad fibres that may arise.
For elliptic and quasi-elliptic surfaces the bad fibres were classified by Kodaira \cite{Kod63} and Néron \cite{Ner64} (see also \cite[Chapter~4]{CDL23}).
For a fibration $S\to B$ by plane rational quartic curves we determined in \cite[Section~3]{HiSt22} and \cite[Section~6]{HiSt23} the bad fibres in specific situations, namely for two pencils coming from items~\ref{2024_06_01_23:22} and \ref{2024_06_01_23:21} in Theorem~\ref{2024_06_01_23:20} respectively.
In Section~\ref{2024_06_02_00:30} of the present paper we analyze the bad fibres of a fibration coming from item~\ref{2024_06_01_23:24}, whose configurations are slightly more involved.
The general picture, however, remains largely unexplored, and we expect that our explicit description of all possible generic fibres will shed further light on this question.

\section{Geometrically rational function fields of genus three in characteristic two}
\label{2024_04_05_18:25}

Given a regular proper geometrically integral curve $C$ over a field $K$ of characteristic $p\geq0$, its function field $F|K = K(C)|K$ is a \emph{one-dimensional separable function field}, that is, $F|K$ is a separably generated field extension of transcendence degree $1$, with $K$ algebraically closed in $F$.
Conversely, every one-dimensional separable function field $F|K$ is the function field of some curve $C|K$ of the above type.

Let $F|K$ be a one-dimensional separable function field of genus $g=3$.
We assume that $F|K$ is geometrically rational, that is, the extended one-dimensional separable function field $\ov K F | \ov K = \ov K \otimes_K F | \ov K$ has genus $\ov g = 0$. 
The strict inequality $\ov g < g$ can only occur in characteristic $p>0$, in which case the genus drop $g - \ov g$ is a multiple of $(p-1)/2$ by Tate's genus change formula \cite{Tate52}, so we conclude $p\in \{ 2,3,7 \}$.
The cases $p=3$ and $p=7$ were studied by Salomão \cite{Sal11,Sal14} and the second author \cite{St04}.
In this section we assume that $p=2$.

For each $n \geq 0$ let $g_n$ be the genus of the $n$-th \emph{Frobenius pullback} $F_n|K := F^{p^n}{\cdot}K|K$ of $F|K$, where $F_n = F^{p^n}{\cdot}K$ is the only intermediate field of $F|K$ such that $F|F_n$ is purely inseparable of degree $p^n$.
Note that $F_n|K$ is the function field of the $n$-th normalized Frobenius pullback $C_n|K = \wt{C^{(p^n)}}|K$ of $C|K$.


According to \cite[Corollary~2.7~(iii)]{HiSt22} we have $g_1 \leq 1$ and $g_n = \ov g = 0$ for $n\geq 2$.
If $g_1=0$, then $F_1|K$ will be a quadratic subfield of genus zero of $F|K$, hence $F|K$ will be hyperelliptic.
Therefore, as in this paper we are interested in non-hyperelliptic function fields, we assume throughout that the Frobenius pullback $F_1|K$ has genus $g_1=1$, i.e., $F_1|K$ is a quasi-elliptic function field (see \cite[Section~2]{HiSt23}).

In view of \cite[Proposition~2.4]{HiSt22} and Rosenlicht's genus drop formula \cite[Formula~2.3]{HiSt22}, the assumption $g_1=1$ means that there exists a unique singular prime $\pp$ in $F|K$,
whose restrictions $\pp_n$ to the Frobenius pullbacks $F_n|K$ have geometric singularity degrees $\de(\pp) = 3$, $\de(\pp_1)=1$ and $\de(\pp_n)=0$ for $n\geq2$.
In particular $\pp_1$ is the only singular prime of the quasi-elliptic Frobenius pullback $F_1|K$.

Moreover, by \cite[Corollary~2.19]{HiSt22} the singular prime $\pp$ is non-decomposed, i.e., there is a unique prime in $\ov K F|\ov K$ that lies over $\pp$, and so by \cite[Theorem~2.24]{HiSt22} the restricted prime $\pp_n$ is rational for $n\geq3$.
In particular, for each $n \geq 3$ the genus zero function field $F_n|K$ is rational.

In this section we study the function fields $F|K$ that satisfy the above properties plus the additional condition that the divisor $\pp$ is not canonical.
The case where the divisor $\pp$ is canonical was analyzed in \cite[Section~3]{HiSt23}.
We divide the discussion into two major parts, treating first the case where the non-singular restricted prime $\pp_2$ is rational.

\begin{thm}\label{2024_02_18_00:35}
A one-dimensional separable function field $F|K$ of characteristic $p=2$ and genus $g = 3$ is geometrically rational and admits a prime $\pp$ such that $\de(\pp) = 3$, $\de(\pp_1) = 1$, $\pp_2$ is rational, and such that $\pp$ is not a canonical divisor, if and only if 
$F=K(x,z,y)$ is generated by three functions $x$, $z$, and $y$ that can be put into the following normal form
\begin{align*}
z^2 &= (c_0 + c_1 x + x^2) (c_0 A_2 + c_1^{-1} + c_1 A_2 x + A_2 x^2), \\
y^2 &= (c_0 + c_1 x + x^2) (B_0 + B_1 x + z),
\end{align*}
where $c_0,c_1,A_2,B_0,B_1\in K$ are constants satisfying the conditions $c_1\neq 0$ and $A_2\notin K^2$.
The singular prime $\pp$ is the only pole of the function $x$.
It has 
degrees $\deg(\pp)=4$, $\deg(\pp_1)=2$, $\deg(\pp_2)=1$, and
residue fields $\ka(\pp)=\ka(A_2^{1/4})$, $\ka(\pp_1)=K(A_2^{1/2})$, $\ka(\pp_2)=K$.
\end{thm}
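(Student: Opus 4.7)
The plan is to construct a presentation of $F|K$ by descending and then working back up the Frobenius tower $F\supset F_1\supset F_2\supset F_3$, following the strategy outlined in the introduction. Since $\de(\pp_n)=0$ for $n\geq 2$, Rosenlicht's drop formula combined with $g_1=1$ and $\de(\pp_1)=1$ yields $g_2=0$, so together with the hypothesis that $\pp_2$ is $K$-rational we conclude that $F_2|K$ is the rational function field in one variable. I would then choose a generator $x$ of $F_2$ whose polar divisor is exactly $\pp_2$, giving $F_2=K(x)$.

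For $F_1$, the extension $F_1|F_2$ is purely inseparable of degree two, so $F_1=K(x,z)$ with $z^2\in K(x)$. Using the Bedoya--Stöhr algorithm, normalize $z$ modulo transformations of the form $z\mapsto z+g(x)$ and $z\mapsto \lambda z$ for $g(x)\in K(x)$ and $\lambda\in K^\times$, which amounts to altering $z^2$ by squares in $K(x)$. The constraints to be enforced are that $F_1|K$ is quasi-elliptic of genus $1$ with a unique singular prime $\pp_1$ over $\pp_2$, of degree $\de(\pp_1)=1$ and residue field $K(A_2^{1/2})$. Computing the residue field at the pole of $x$ forces the leading coefficient of $z^2$ as a polynomial in $x$ to be a non-square $A_2\notin K^2$; reading off the pole order of $z$ at $\pp_1$ pins down the degree of $z^2$ as $4$; and the uniqueness and delta-invariant of $\pp_1$ should force $z^2$ into the Artin--Schreier-like shape $z^2=A_2u^2+c_1^{-1}u$ with $u=c_0+c_1x+x^2$ and $c_1\neq 0$, which is exactly the factored form given in the statement.

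For $F$, write $F=F_1(y)$ with $y^2\in F_1$. I would apply Riemann--Roch on $F|K$ with $g=3$, $\de(\pp)=3$ and $\ka(\pp)=K(A_2^{1/4})$, and use the non-canonicity of the divisor $\pp$ to determine the required pole order of $y$ at $\pp$. The residue field datum forces $y$ to satisfy $y^2/x^4\to A_2^{1/2}$ at $\pp$, and a further Bedoya--Stöhr normalization of $y$ modulo squares of elements of $F_1$ will yield $y^2=(c_0+c_1x+x^2)(B_0+B_1x+z)$. The common quadratic factor $c_0+c_1x+x^2$ appears in both equations because $y$ and $z$ must simultaneously resolve the singularity over the place $\pp_1$; the linearity in $x$ of the second factor $B_0+B_1x+z$, as opposed to a full cubic or quartic expression in $x$, is precisely the manifestation of the non-canonicity hypothesis on $\pp$, and is what distinguishes this case from the canonical-divisor case treated in \cite{HiSt23}.

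The converse direction and the residue field computation are routine: starting from the two equations with constants satisfying the stated non-square and non-vanishing conditions, verify that $F=K(x,z,y)$ has genus $3$, locate its unique singular prime as the polar divisor of $x$, and extract the chain $\ka(\pp)=K(A_2^{1/4})$, $\ka(\pp_1)=K(A_2^{1/2})$, $\ka(\pp_2)=K$ by reading off the leading behaviour of $y$, $z$ and $x$ as one descends the Frobenius tower. The main technical obstacle I expect is the Bedoya--Stöhr normalization producing the exact symmetric form of the second quadratic factor in the $z^2$-equation together with the specific coupling of the two equations via the shared factor $c_0+c_1x+x^2$, which requires simultaneously tracking the normalizations of $z$ modulo squares in $K(x)$ and of $y$ modulo squares in $F_1$, and coordinating them with a final Möbius adjustment of the rational generator $x$ of $F_2$.
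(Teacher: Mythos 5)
Your overall strategy --- climbing the Frobenius tower $F_3 \subset F_2 \subset F_1 \subset F$ and adjoining generators via Riemann--Roch and the Bedoya--St\"ohr algorithm --- is exactly the paper's, but your sketch mislocates where the normal form comes from. The factored shape $z^2 = c(x)\,(A_2 c(x) + c_1^{-1})$ with $c(x) = c_0 + c_1 x + x^2$ is \emph{not} a consequence of $F_1|K$ being quasi-elliptic with a unique singular prime of singularity degree one and rational $\pp_2$: that data only yields the normal form $z^2 = a_0 + x + a_2 x^2 + a_4 x^4$ with $a_4 \notin K^2$, quoted from the earlier paper. The polynomial $c(x)$ first enters as the coefficient of $z$ in the relation $y^2 = b(x) + (c_0 + c_1 x + c_2 x^2)\,z$ presenting $F$ over $F_1$, with $c_2$ normalized to $1$ using $\de(\pp)=3$; the divisibility of both $a(x)$ and $b(x)$ by $c(x)$ is then forced by the genus-$3$ hypothesis on $F$ (non-singularity of the zeros of $dy^4/dx = c(x)^2$, via the Jacobian criterion), and is extracted over the separable closure by a symmetric-function computation in the two roots of $c(x)$, coordinated with normalizations of $z$ and $y$. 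Deriving the $z^2$-equation from ``the uniqueness and delta-invariant of $\pp_1$'' alone, as you propose, cannot work, and likewise the linearity of the factor $B_0 + B_1 x$ is a consequence of this divisibility together with the degree bound coming from $y \in H^0(\pp^{2e})$, not directly of the non-canonicity of $\pp$.

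Beyond this, several steps your sketch treats as automatic require real arguments and are where the hypotheses actually get used. First, one must show that the divisor $\pp^e$ (not merely $\pp$) is non-canonical, since otherwise $y$ could be taken in $H^0(\pp^e)$ with $y^2 = b_0 + b_1 x + b_2 x^2 + b_3 z$; the paper rules this out by showing such a $y$ would force $\ka(\pp) = K(a_4^{1/4})$, $e=1$, and $\pp$ canonical. Second, one must prove $\cy(\pp) \notin \ka(\pp_1)$ --- hence $e=1$ and $\ka(\pp) = K(A_2^{1/4})$ --- which again invokes non-canonicity together with a Bedoya--St\"ohr computation of $\de(\pp)$. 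Third, and most substantially, one must prove $c_1 \neq 0$, without which the constant $c_1^{-1}$ in the normal form is meaningless; this takes a page of case analysis on whether the double root of $c(x)=c_0+x^2$ lies in $K$ and whether the primes $\fq_1$, $\fq$ above it are rational. Finally, the converse direction is not entirely routine: verifying that the normal form makes $\pp$ non-canonical requires exhibiting the sixth generator $u = yz/c(x)$ of $H^0(\pp^2)$ and then showing $H^0(\pp) = K \oplus Kx$ by a linear-independence argument over $K^2$ that uses $A_2 \notin K^2$.
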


The theorem complements \cite[Theorem~3.1~(i)]{HiSt23}, which characterizes the function fields $F|K$ such that $\pp_2$ is rational and $\pp$ is a canonical divisor.
The proof will rely on \cite[Section~2]{HiSt23}, which provides a normal form for the quasi-elliptic Frobenius pullback $F_1|K$.
Another key ingredient will be the Bedoya--Stöhr algorithm \cite{BedSt87}, which enables us to compute several local invariants of the primes of $F|K$.

Note that in order to apply this algorithm for a given prime $\fq$, all we need is that its restriction $\fq_n$ to $F_n|K$ be rational for some $n$, i.e., that $\fq$ be non-decomposed, a condition that is automatic if we assume, as it is assumed in \cite{BedSt87}, that the base field $K$ is separably closed (see \cite[Corollary~2.17]{HiSt22}).

\begin{proof}
Let $F|K$ be a function field of genus $g = 3$ and let $\pp$ be a prime such that $\de(\pp) = 3$, $\de(\pp_1) = 1$, $\pp_2$ is rational, and such that the divisor $\pp$ is non-canonical. 
(Note that by the genus drop formula \cite[Formula~2.3]{HiSt22} the existence of the singular prime $\pp$ ensures that $F|K$ is geometrically rational.)
As the Frobenius pullback $F_1|K$ is quasi-elliptic and the restriction $\pp_2$ of its only singular prime $\pp_1$ is rational, 
we deduce from \cite[Theorem~2.1~(i)]{HiSt23} that $F_1|K$ admits the following normal form
\[ F_1|K = K(x,z)|K, \, \text{ where } \, z^2 = a_0 + x + a_2 x^2 + a_4 x^4, \text{ and $a_0,a_2 \in K$, $a_4 \in K \setminus K^2$}. \]
The singular prime $\pp_1$ is the only pole of the function $x$, and it has residue fields
\[ \ka(\pp_1)=K(a_4^{1/2}), \quad \ka(\pp_2)=K . \]
In particular 
$\deg(\pp_1) = p \, \deg(\pp_2) = 2$,
or in other words,
the prime $\pp_1$ is inertial (or unramified) over $F_2=K(x)$.
The function $z$ is a separating variable 
of $F_1|K$, that is, the finite extension $F_1|K(z)$ is separable, i.e., $F_1=F_2(z)$, i.e., $z\notin F_2$.
These functions satisfy the incidence properties $x\in H^0(\pp_2) \setminus K$ and $z \in H^0(\pp_1^2) \setminus H^0(\pp_2^2)$, or more precisely
\begin{align*}
    H^0(\pp_1) &= H^0(\pp_2) = K \oplus Kx, \\
    H^0(\pp_2^2) &=  K \oplus Kx \oplus Kx^2, \\
    H^0(\pp_1^2) &= K \oplus Kx \oplus Kx^2 \oplus Kz;
\end{align*}
see \cite[Remark 2.3]{HiSt23}. Furthermore, from $\dim H^0(\pp_1^4)=8$ we obtain
\begin{equation}\label{2023_12_10_18:45}
H^0(\pp_1^4) = K \oplus Kx \oplus Kx^2 \oplus Kx^3 \oplus Kx^4 \oplus Kz \oplus Kxz \oplus Kx^2 z. 
\end{equation}

Let 
$e$ denote the ramification index of the extension $\pp|\pp_1$.
As
the divisor $\pp^e$ has degree 
\[ \deg(\pp^e) = [F:F_1] \cdot \deg(\pp_1) = 4 = 2g-2,  \] 
it follows from the Riemann--Roch theorem that the spaces of global sections $H^0(\pp^{ne})$ of the divisors $\pp^{ne}$ have dimension
\begin{align*}
    \dim H^0(\pp^{ne}) &= 4n-2 \quad \text{if $n \geq 2$},\\
    \dim H^0(\pp^e) &= 
    \begin{cases} 3 & \text{if $\pp^e$ is a canonical divisor},\\
    2 & \text{if $\pp^e$ is not a canonical divisor}. \end{cases}
\end{align*}
In particular, as the space $H^0(\pp^e)$ contains the 2-dimensional vector space $H^0(\pp_1)$, the divisor $\pp^e$ is non-canonical if and only if $H^0(\pp^e)=H^0(\pp_1)= K \oplus Kx$.

We claim that the divisor $\pp^e$ is non-canonical, i.e., $H^0(\pp^e)=K\oplus Kx$.
Indeed, assume for contradiction that there is a function $y\in F$ such that
\[ H^0(\pp^e) = H^0(\pp_1) \oplus Ky = K \oplus Kx \oplus Ky. \]
This function does not belong to $F_1=K(x,z)$ because $H^0(\pp^e) \cap F_1 = H^0(\pp_1)$,
hence $F=F_1(y)=K(x,z,y)$, or in other words, $y$ is a separating variable of $F|K$.
Since the square $y^2$ lies in $H^0(\pp^{2e}) \cap F_1 = H^0(\pp_1^2)$, but not in $F_2=K(x)$ as $y^2$ is a separating variable of $F_1|K$,
there are constants $b_i \in K$ with $b_3\neq 0$ such that $y^2 = b_0 + b_1 x + b_2 x^2 + b_3 z$.
As the residue class $\frac yx(\pp) \in \ka(\pp)$ lies outside $\ka(\pp_1)=K(a_4^{1/2})$, because $\frac yx(\pp)^2 = b_2 + b_3 a_4^{1/2} \notin K$, we conclude $\ka(\pp)=K(a_4^{1/2}, \frac yx (\pp))  \supsetneqq \ka(\pp_1)$, whence $e=1$ and the divisor $\pp=\pp^e$ would be canonical, in contradiction to the assumptions. This proves the claim.

We want to find a presentation of $F|K$.
Since $H^0(\pp^e)=H^0(\pp_1)$, to get a generator of the extension $F|F_1$ we must pass to $\pp^{2e}$ and $\pp_1^2$.
As $\dim H^0(\pp^{2e})=6 > \dim H^0(\pp_1^2)=4$ there is an element $y\in H^0(\pp^{2e})\setminus H^0(\pp_1^2)$, which does not belong to $F_1=K(x,z)$ because $H^0(\pp^{2e}) \cap F_1 = H^0(\pp_1^2)$.
Therefore $y$ is a separating variable of $F|K$, that is, \[ F=F_1(y) = K(x,z,y). \]
Since the square $y^2$ lies in $H^0(\pp^{4e})\cap F_1 = H^0(\pp_1^4)$, but not in $F_2=K(x)$ as $y^2$ is a separating variable of $F_1|K$, 
there exist constants $b_i,c_i\in K$ with $(c_0,c_1,c_2)\neq (0,0,0)$ such that 
\[ 
y^2 
= b_0 + b_1 x + b_2 x^2 + b_3 x^3 + b_4 x^4 + (c_0 + c_1 x + c_2 x^2) z. 
\]
In order to study the singular prime $\pp$ 
we introduce the functions $\cx:=x^{-1}\in F_2 = K(x)$, $\cz:=zx^{-2} \in F_1=K(x,z)$ and $\cy:= yx^{-2} \in F$.
Note that $\cx$ is a local parameter at both $\pp_1$ and $\pp_2$, and that $\cz$ and $\cy$ satisfy the relations
\begin{align*}
    \cz^2 &= a_4 + a_2 \cx^2 + \cx^3 + a_0 \cx^4, \\
    \cy^2 &= b_4 + b_3 \cx + b_2 \cx^2 + b_1 \cx^3 + b_0 \cx^4 + (c_2 + c_1 \cx + c_0 \cx^2) \cz. 
\end{align*}
In particular, for the residue classes $\cz(\pp),\cy(\pp) \in \ka(\pp)$ we have
\[ \cz(\pp)^2 = a_4 \notin K^2, \quad \cy(\pp)^2 = b_4 + c_2 \cz(\pp),\quad \ka(\pp_1) = K(\cz(\pp)) . \]

We claim that $\cy(\pp)$ does not belong to $\ka(\pp_1)$.
Indeed, assume the contrary $\breve y(\pp) \in \ka(\pp_1)$.
Then $c_2=0$ since $\cz(\pp)\notin K$, 
and $\cy(\pp) = \al + \be \cz(\pp)$ for some $\al,\be \in K$. 
Substituting $y$ with $y + \al x^2 + \be z$ we may assume $\cy(\pp)=0$, i.e., $b_4=0$.
If $b_3 + c_1 \cz(\pp) \neq 0$, then $v_{\pp_1}(\cy^2)=1$ and $\pp$ is ramified over $F_1$ (i.e., $e=2$) with local parameter $\cy$, so that $\de(\pp) = 2 \de(\pp_1) + \frac 12 v_{\pp_2} (d\cy^4) = 2 + \frac 12 v_{\pp_2}((c_1^2 \cx^4 + c_0^2 \cx^6)d\cx) > 3$ by \cite[Theorem~2.3]{BedSt87}, a contradiction.
In the opposite case $b_3 + c_1 \cz(\pp) = 0$ we have $b_3=c_1=0$ (and therefore $c_0\neq 0$) since $\cz(\pp)\notin K$, hence the function $y^2 = b_0 + b_1 x + b_2 x^2 + c_0 z$ belongs to $H^0(\pp_1^2) = H^0(\pp^{2e}) \cap F_1$ and thus $y\in H^0(\pp^e) \setminus H^0(\pp_1)$, which contradicts the fact that the divisor $\pp^e$ is non-canonical.
This proves the claim.

It follows from the claim that $e=1$, or more precisely, the prime $\pp$ is inertial over $F_1$ with residue field $\ka(\pp)=K(\cz(\pp),\cy(\pp))$.
Now, by \cite[Theorem~2.3]{BedSt87} the hypothesis $\de(\pp)=3$ means that $2\de(\pp_1) + \frac 12 v_{\pp_2}(d\cy^4)=3$, i.e.,
the differential $d\cy^4 = (c_2^2 + c_1^2 \cx^2 + c_0^2 \cx^4) \cx^2 d\cx$ of $F_2|K$ has order $2$ at $\pp_2$, i.e., $c_2\neq 0$, in which case we may normalize $c_2=1$ by replacing $x,y,z$ with $c_2^2 x,c_2^3 y,c_2 z$ respectively.

We have thus integrated the assumption $\de(\pp)=3$ into our normal form.
To complete the proof it remains to translate the two conditions that $g=3$ and that the divisor $\pp$ is non-canonical into relations between the coefficients $a_i,b_i,c_i$.
Since $F|K=K(x,y)|K$ with $y^4 = f(x)$, it follows from
the Jacobian criterion \cite[Corollaries~4.5 and~4.6]{Sal11} and the genus drop formula \cite[Formula~2.3]{HiSt22}
that the assumption $g=3$ is satisfied if and only if the zeroes of the function $\frac{dy^4}{dx}=f'(x)=(c_0 + c_1 x + x^2)^2$ are non-singular primes, that is, for every zero $\fq$ of the function
\[ c(x) := c_0 + c_1 x + x^2 \]
we have $\de(\fq)=0$.
Note that our normal form already ensures that the restricted primes $\fq_1$ have $\de(\fq_1)=0$, because $F_1|K=K(x,z)|K$ is quasi-elliptic.

We claim that $c_1\neq0$.
Seeking a contradiction we suppose $c_1=0$.
Assume first that the root $c_0^{1/2}$ of the polynomial $c(x) = c_0 + x^2$ belongs to $K$.
By assumption, the zero $\fq$ of the function $x+c_0^{1/2}$ is non-singular, i.e., $\de(\fq)=0$.
Substituting $x$ with $x+c_0^{1/2}$ we can normalize $c_0=0$, i.e., $x(\fq)=0$, i.e., $x$ is a local parameter at the rational prime $\fq_2$ of $F_2|K=K(x)|K$.
Using \cite[Proposition~4.1]{BedSt87} we deduce that the non-singular prime $\fq_1$ is rational (and ramified over $F_2$) if and only if $z(\fq)=a_0^{1/2}$ lies in $K$.
In particular $\ka(\fq_1)=K(z(\fq))$.
Moreover $y(\fq)\in\ka(\fq_1)$, since otherwise $\fq$ is inertial over $F_1$ and $\de(\fq) = \frac 12 v_{\fq_2}(dy^4) = \frac12 v_{\fq_2}(x^4 dx) = 2 > 0$ by \cite[Theorem~2.3]{BedSt87}, 
and thus by subtracting from $y$ an element of $K+Kz$ we can normalize $y(\fq)=0$, i.e., $b_0=0$.

When $\fq_1$ is not rational we have $b_1=0$, because otherwise $\fq$ is ramified over $F_1$ with local parameter $y$ and $\de(\fq)= \frac12 v_{\fq_2}(dy^4)=2$ by \cite[Theorem~2.3]{BedSt87};
then the function $\big( \frac yx \big)^2 = b_2 + b_3 x + b_4 x^2 + z$ belongs to $H^0(\pp_1^2) = H^0(\pp^2) \cap F_1$ and so $\frac {y}{x} \in H^0(\pp)\setminus H^0(\pp_1)$, a contradiction because the divisor $\pp=\pp^e$ is non-canonical.
When $\fq_1$ is rational, i.e., $z(\fq)=a_0^{1/2} \in K$, we may normalize $a_0=0$ by subtracting $a_0^{1/2}$ from $z$. Then $z$ is a local parameter at $\fq_1$ and $v_{\fq_1}(y^2 + b_1 z^2) \geq 4$.
Since $v_{\fq_1}(dy^2) = v_{\fq_1}(x^2 dz)=4$ as the differential $dx$ of $F_1|K=K(x,z)|K$ vanishes, it follows from \cite[Proposition~4.1]{BedSt87} and $\de(\fq)=0$ that $b_1\in K^2$, hence we can normalize $b_1=0$ by replacing $y$ with $y + b_1^{1/2}z$.
As before, this yields the contradiction $\frac yx \in H^0(\pp)\setminus H^0(\pp_1)$.

Thus in the proof of the claim we can suppose that the root
$c_0^{1/2}$ of the polynomial $c(x)=c_0 + x^2$ does not belong to $K$.
By our hypothesis, the zero $\fq$ of the function $\tau:=c_0 + x^2 \in F$ is non-singular, i.e., $\de(\fq)=0$.
Moreover, it is clear that $\tau$ is a local parameter at the rational prime $\fq_3$ of $F_3|K=K(\tau)|K$, and that $\fq_2$ is unramified over $F_3$ with $\ka(\fq_2)=K(x(\fq)) = K(c_0^{1/2})$.
Since $z(\fq)\notin \ka(\fq_2)$ as 
$z(\fq)^2 \notin K$,
the prime $\fq_1$ is unramified over $F_2$ with $\ka(\fq_1) = K(x(\fq),z(\fq))$.
Now, if $y(\fq)\notin \ka(\fq_1)$ then $\fq$ is inertial over $F_1$ and $\de({\fq})=\frac12 v_{\fq_3}(dy^8)=\frac12 v_{\fq_3}(\tau^4 d\tau) = 2$ by \cite[Theorem~2.3]{BedSt87}, a contradiction.
In the opposite case $y(\fq) \in K(x(\fq),z(\fq))$, say $t(\fq)=0$ for some $t$ in $y + K + K x + K z + K xz$, the prime $\fq$ is ramified over $F_1$ with local parameter $t$ because 
\[ v_{\fq_3}(dt^8) = v_{\fq_3}(dy^8) =4<8, \]
and therefore $\de(\fq)=\frac12 v_{\fq_3}(dt^8)=2$, a contradiction.
This completes the proof of the claim that $c_1 \neq 0$.


We next normalize $b_4=0$ by replacing $z$ with $z+ b_4 x^2$. 
Now we claim that
\[ g=3 \ \text{ if and only if } \ a(r),a(s),b(r),b(s)\in L^2,\]
where $a(x) := a_0 + x + a_2 x^2 + a_4 x^4$, $b(x) := b_0 + b_1 x + b_2 x^2 + b_3 x^3$, and $r,s \in L$ are the two roots of the polynomial $c(x)$ in the separable closure $L$ of $K$.
To see this we may assume that $K$ is separably closed, i.e., $K=L$, by passing from $K$ to $L$ if necessary.
Then the function $c(x)$ has exactly two zeros (one for each root $r,s \in K$), and we must show that for every such zero $\fq$ the following holds
\[ \de(\fq)=0 \ \text{ if and only if } \ z(\fq),y(\fq)\in K. \]
Let $r$ be the root of $c(x)$ that corresponds to the zero $\fq$.
As $x(\fq) = r \in K$, by subtracting $r$ from $x$ we can suppose $x(\fq)=0$,
that is, $c_0 = 0$ and $x$ is a local parameter at the rational prime $\fq_2$ of $F_2|K=K(x)|K$.
Since 
\[ v_{\fq_2}(d y^4) = v_{\fq_2}\big( (c_1^2 x^2 + x^4)dx \big) = 2 > 0, \]
we deduce from \cite[Theorem~2.3]{BedSt87} that $y(\fq)\in \ka(\fq_1)$ whenever $\de(\fq)=0$.
Assuming that $z(\fq)\notin K$, we see that the prime $\fq_1$ is unramified over $F_2$ with $\ka({\fq_1}) = K(z(\fq))$, and if we suppose $\de(\fq)=0$ then $y(\fq)\in \ka({\fq_1})$ means that 
$t(\fq)=0$ for some $t$ in $y + K + Kz$,
so that $\fq$ is ramified over $F_1$ with local parameter $t$ because
\[ v_{\fq_2}(dt^4) = v_{\fq_2}(dy^4) = 2 < 4, \]
and therefore $\de(\fq)=\frac12 v_{\fq_2}(dt^4)=1$, a contradiction.
Thus the condition $\de(\fq)=0$ implies that $z(\fq)\in K$.
So in order to prove the claim we may assume $z(\fq)=0$, i.e., $a_0 = 0$,
in which case $\fq_1$ is ramified (and therefore rational) over $F_2$ with local parameter $z$.
Since $v_{\fq_1}(dy^2)=v_{\fq_1}((c_1 x + x^2)dz)=2$ as the differential $dx$ of $F_1|K = K(x,z)|K$ vanishes, we conclude from \cite[Proposition~4.1]{BedSt87} that $\de(\fq) = 0$ if and only if $y(\fq)\in K$, thereby proving the claim.

We next rewrite the conditions $a(r),a(s),b(r),b(s)\in L^2$ on the roots $r,s \in L$ of the polynomial $c(x)$ in terms of the constants $a_i,b_i,c_i \in K$. 
To this end we apply the theory of symmetric polynomials.
Define
\[ q:= r+s = c_1 \in K, \qquad t := rs = c_0 \in K. \]
Clearly, the four symmetric polynomial expressions
\begin{align*}
    a(r)+a(s) &=q+a_2 q^2 + a_4 q^4, \\
    r^2 a(r) + s^2 a(s) &= a_0 q^2 + (q^3 + qt) + a_2 q^4 + a_4 (q^6 + q^2 t^2), \\
    b(r) + b(s) &= b_1 q + b_2 q^2 + b_3 (q^3 + qt), \\ 
    r^2 b(r) + s^2 b(s) &= b_0 q^2 + b_1 (q^3 + qt) + b_2 q^4 + b_3 (q^5 + t (q^3 + qt)), 
\end{align*}
belong to $L^2\cap K =K^2$, say they can be written as $\al^2$, $\be^2$, $\te^2$, $\ga^2$ respectively. Since $q\neq 0$ we can perform four normalizations along the following steps: 
substitute $z$ with $z+ \frac \al q x$, so that $a(r)+a(s)=0$; 
replace $z$ with $z+\frac\be q$, so that $r^2 a(r) + s^2 a(s) = 0$; 
substitute $y$ with $y + \frac \te q x$, so that $b(r)+b(s)=0$; 
replace $y$ with $y + \frac \ga q$, so that $r^2 b(r) + s^2 b(s) =0$.
Thus
\[ a(r) + a(s) = r^2 a(r) + s^2 a(s) = b(r) + b(s) = r^2 b(r) + s^2 b(s) = 0, \]
i.e., $a(r)=a(s)=b(r)=b(s)=0$, which means that $c(x)$ divides both $a(x)$ and $b(x)$. 
We have therefore obtained a normal form for $F|K$ as in the statement of the theorem.

To complete the proof of the theorem we must verify that the normal form ensures that the divisor $\pp$ is non-canonical, i.e., $H^0(\pp) = K\oplus Kx$.
To do this we first find the space of global sections $H^0(\pp^2)$ of the divisor $\pp^2$.
Since the $6$-dimensional vector space $H^0(\pp^2)$ contains the $4$-dimensional vector space $H^0(\pp_1^2)$ and the function $y$, we must find a sixth element $u\in H^0(\pp^2)$ such that
\begin{equation*}
    H^0(\pp^2)=K\oplus Kx\oplus Kx^2 \oplus Kz \oplus Ky \oplus Ku.
\end{equation*}
Write $c(x):= c_0 + c_1 x + x^2$, $A(x):= c_0 A_2 + c_1^{-1} + c_1 A_2 x + A_2 x^2$ and $B(x) := B_0 + B_1 x$, so that $z^2 = c(x) A(x)$ and $y^2 = c(x) (B(x) + z)$.
We claim that $u:=\frac{yz}{c(x)}$ satisfies the desired property. Indeed, since $u^2=A(x) (B(x)+z)$ lies in $H^0(\pp_1^4)$ (see~\eqref{2023_12_10_18:45}),
and hence in $H^0(\pp^4)$, it is clear that $u\in H^0(\pp^2)$. Moreover, the functions $1,x,x^2,z,y,u$ are linearly independent over $K$ because their squares $1,x^2,x^4,c(x) A(x),c(x)(B(x)+z), A(x)(B(x)+z)$ are linearly independent over $K^2$ (recall that $A_2\notin K^2$).
    
We finally show that $H^0(\pp) = K\oplus Kx$. We must prove that each element $h$ of $H^0(\pp)$ lies in $K\oplus Kx$. 
Since $H^0(\pp)$ is contained in $H^0(\pp^2)$, we may write $h=d_1 + d_2 x + d_3 x^2 + d_4 z + d_5 y + d_6 u$, so that
\[ h^2 = d_1^2 + d_2^2 x^2 + d_3^2 x^4 + d_4^2 c(x) A(x) + d_5^2 c(x) (B(x) + z) + d_6^2 A(x) (B(x) + z)\]
lies in $H^0(\pp^2) \cap F_1 = H^0(\pp_1^2)=K\oplus Kx \oplus Kx^2 \oplus Kz$. 
Using the fact that $A_2\notin K^2$ we conclude $d_3=d_4=d_5=d_6=0$, that is, $h\in K\oplus Kx$, as desired.
\end{proof}

\begin{rem}\label{2024_08_26_14:35}
We draw some consequences from the above proof.
Let $F|K=K(x,z,y)|K$ be a function field as in Theorem~\ref{2024_02_18_00:35}.
Then the first Frobenius pullback $F_1|K=K(x,z)|K$ is a quasi-elliptic function field,
given as in \cite[Theorem~2.1~(i)]{HiSt23}, and the second Frobenius pullback $F_2|K=K(x)|K$ is a rational function field.
Also $e=e_1=1$, where $e$ and $e_1$ denote the ramification indices of $\pp$ and $\pp_1$ over $F$ and $F_1$ respectively. Besides,
\begin{align*}
    H^0(\pp_1) &= K \oplus Kx, \\
    H^0(\pp_2^2) &=  K \oplus Kx \oplus Kx^2, \\
    H^0(\pp_1^2) &= K \oplus Kx \oplus Kx^2 \oplus Kz, \\
    H^0(\pp^2) &= K\oplus Kx\oplus Kx^2 \oplus Kz \oplus Ky \oplus Ku,
\end{align*}
where $u = yz (c_0 + c_1 x + x^2)^{-1}$.
\end{rem}

The above Riemann--Roch spaces allow us to determine the isomorphism classes of the function fields in the theorem.

\begin{prop}\label{2024_03_25_19:55}
Let $F|K$ and $F'|K$ be two function fields as in Theorem~\ref{2024_02_18_00:35}. 
Then $F|K$ and $F'|K$ are isomorphic if and only if there exist constants $\al,\mu_2,\mu_3,\mu_4,\mu_5 \in K$ satisfying $(\mu_4,\mu_5)\neq (0,0)$ and
\begin{align*}
    \eps^{-3} c_0' &= \al^2 \eps + \al \eps c_1 + \mu_5^2 c_1^{-1} + \eps c_0, \qquad c_1' = \eps^2 c_1, \qquad \eps^6 A_2' = A_2 + \ga^2, \\
    B_0' &= (\al B_1 + B_0) \eps + c_1^{-1} (\mu_4 \mu_5 + \mu_3^2), \qquad \eps B_1' = B_1, 
\end{align*}
where $\eps:= \mu_4^2 + \mu_5^2 A_2 \neq 0$ and $\ga := \eps^{-1}(\mu_2^2 + \mu_3^2 A_2)$.
The corresponding $K$-isomorphisms $F'\overset\sim\to F$ are given by the transformations
\[ (x',z',y') \mapsto (\eps^2(\al + x),\eps(\be + \ga c_1 x + \ga x^2 + z),\eps^2(\tau + \mu_2 c_1 x + \mu_2 x^2 + \mu_3 z + \mu_4 y + \mu_5 u)), \]
where $\be:= \ga c_0 + \eps^{-1} c_1^{-1} \mu_5 (\mu_4 + \mu_5 \ga)$ and $\tau := \mu_2 c_0 + \eps^{-1} c_1^{-1} \mu_5 (\mu_2 \mu_5 + \mu_3 \mu_4)$.
\end{prop}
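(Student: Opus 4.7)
The plan is to use the intrinsic characterization of the Riemann--Roch spaces attached to the singular primes to pin down the shape of any $K$-isomorphism $\varphi\colon F'\overset\sim\to F$, and then to extract the stated relations by substituting into the two defining equations and matching coefficients in the resulting polynomial identities.

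Since $\pp$ and $\pp'$ are the unique singular primes of $F$ and $F'$ respectively (by Theorem~\ref{2024_02_18_00:35}), $\varphi$ carries $\pp'$ to $\pp$ and hence induces $K$-linear bijections $H^0(\pp'^n) \overset\sim\to H^0(\pp^n)$ for each $n$. The explicit bases for $H^0(\pp_1)$, $H^0(\pp_1^2)$ and $H^0(\pp^2)$ recorded in Remark~\ref{2024_08_26_14:35} then yield the ansatz
\begin{align*}
\varphi(x') &= \eps^2(\al + x),\\
\varphi(z') &= \eps(\be_0 + \lam_1 x + \lam_2 x^2 + z),\\
\varphi(y') &= \eps^2(\tau_0 + \nu_1 x + \nu_2 x^2 + \mu_3 z + \mu_4 y + \mu_5 u),
\end{align*}
with coefficients in $K$, subject to $\eps\in K^*$ and $(\mu_4,\mu_5)\neq(0,0)$. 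The outer factors $\eps^2,\eps,\eps^2$ are convenient normalizations anticipating the identity $\eps = \mu_4^2 + \mu_5^2 A_2$ to be established below.

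Substituting the ansatz into $z'^2 = c'(x')A'(x')$ and exploiting the reformulation $c'(X)A'(X) = A_2'c'(X)^2 + c_1'^{-1}c'(X)$ (which follows from $A'(X) = A_2'c'(X) + c_1'^{-1}$), I extract from the $x^4$-coefficient the identity $\eps^6 A_2' = A_2 + \lam_2^2$ and from the $x^2$-coefficient the relation $\lam_1 = \lam_2 c_1$. Turning to $y'^2 = c'(x')(B'(x') + z')$, the crucial observation is that, thanks to the identity $A(x) = A_2 c(x) + c_1^{-1}$, the coefficient of $z$ on the left, computed from $\varphi(y')^2 = \eps^4(\tau_0^2 + \nu_1^2 x^2 + \nu_2^2 x^4 + \mu_3^2 z^2 + \mu_4^2 y^2 + \mu_5^2 u^2)$, simplifies to $\eps^4\bigl((\mu_4^2+\mu_5^2 A_2)c(x) + \mu_5^2 c_1^{-1}\bigr)$, while the coefficient of $z$ on the right equals $\eps\,c'(\varphi(x'))$. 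Matching the $x^2,x^1,x^0$ coefficients of these two polynomials yields respectively $\eps = \mu_4^2 + \mu_5^2 A_2$ (nonzero because $A_2\notin K^2$ and $(\mu_4,\mu_5)\neq(0,0)$), $c_1' = \eps^2 c_1$, and the stated formula for $c_0'$. As a byproduct, $c'(\varphi(x')) = \eps^4 c(x) + \eps^3\mu_5^2 c_1^{-1}$, a simplification that streamlines the remaining work.

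Turning to the $z$-free part of the $y'^2$ identity, I would compare the coefficients of $x^4,x^3,x^2,x^1,x^0$ and solve the resulting system. Writing $\ga := \lam_2$, the $x^4$-coefficient gives $\ga = \eps^{-1}(\nu_2^2 + \mu_3^2 A_2)$, so setting $\mu_2 := \nu_2$ reproduces the stated definition of $\ga$. The $x^3$-coefficient gives $\eps B_1' = B_1$. The remaining three coefficients, together with the $x^0$-coefficient of the $z'^2$ identity, constitute four equations in the four unknowns $B_0',\nu_1,\be := \be_0,\tau := \tau_0$, whose solution yields $\nu_1 = \mu_2 c_1$ (so $\varphi(y')$ assumes its stated form) together with the stated formulas for $B_0'$, $\be$ and $\tau$. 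Conversely, given parameters $\al,\mu_2,\mu_3,\mu_4,\mu_5\in K$ with $(\mu_4,\mu_5)\neq(0,0)$ satisfying the stated identities, one verifies by direct substitution (using $z^2=c(x)A(x)$, $y^2=c(x)(B(x)+z)$ and $u^2=A(x)(B(x)+z)$) that the prescribed formulas define a $K$-algebra homomorphism $\varphi\colon F'\to F$ satisfying both defining equations of $F'$, hence a $K$-isomorphism (an inverse of the same shape is obtained by symmetry). I expect the principal obstacle to be the coefficient bookkeeping in the $z$-free part of the $y'^2$ identity; the key simplifying identities $A(x) = A_2 c(x) + c_1^{-1}$ and $c'(\varphi(x')) = \eps^4 c(x) + \eps^3\mu_5^2 c_1^{-1}$ absorb $A(x)$ into a multiple of $c(x)$ plus a constant and thereby keep the parameter dependence tractable.
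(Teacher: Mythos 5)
Your proposal is correct and follows essentially the same route as the paper: use the fact that any $K$-isomorphism preserves the singular prime and hence the filtration $H^0(\pp_1)\subset H^0(\pp_1^2)\subset H^0(\pp^2)$ to write $\varphi(x'),\varphi(z'),\varphi(y')$ as linear combinations of $1,x,x^2,z,y,u$, then substitute into the two defining equations and compare coefficients against the basis $1,x,x^2,x^3,x^4,z,xz,x^2z$ (the paper's proof is exactly this, stated even more tersely). The one point you should make explicit is that the outer factors $\eps^2,\eps,\eps^2$ are not free ``normalizations'' but forced relations — e.g.\ the $x$-coefficient of the $z'^2$-identity forces the $x$-coefficient of $\varphi(x')$ to equal the square of the $z$-coefficient of $\varphi(z')$ — which is a one-line check; your remaining coefficient computations are consistent with the stated formulas.
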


\begin{proof}
Every $K$-isomorphism $\sig:F' \overset\sim\to F$ preserves the only singular primes $\pp'$ and $\pp$ of $F'|K$ and $F|K$ respectively, hence it induces an isomorphism $H^0(\pp_m'^n) \overset\sim\to H^0(\pp_m^n)$ for each $m,n \geq 0$.
Thus the incidence properties of $x'$, $z'$ and $y'$ inherit as follows
\begin{align*}
    \sig(x') \in H^0(\pp_1) \setminus K , \quad
    \sig(z') \in H^0(\pp_1^2) \setminus H^0(\pp_2^2), \quad
    \sig(y') \in H^0(\pp^2) \setminus H^0(\pp_1^2).
\end{align*}
Moreover, the functions $\sig(x')$, $\sig(z')$, $\sig(y')$ also satisfy the two polynomial equations with the coefficients $c_i',B_i',A_2'$.
In these equations we substitute $\sig(x')$, $\sig(z')$, $\sig(y')$ by the corresponding $K$-linear combinations of $1$, $x$, $x^2$, $z$, $y$, $u$, and we replace $z^2$ and $y^2$ with the right-hand sides of the equations in the announcement of Theorem~\ref{2024_02_18_00:35}.
As the eighth functions $1$, $x$, $x^2$, $x^3$, $x^4$, $z$, $xz$, $x^2 z$ are $K$-linearly independent, we obtain $2\cdot 8 = 16$ polynomial equations between $A_2$, $b_i$, $c_i$, $A_2'$, $b_i'$, $c_i'$, and the $2 + 4 + 6 = 12$ coefficients of the expansions of $\sig(x')$, $\sig(z')$, $\sig(y')$.
Some of these equations are identically zero.
\end{proof}

\begin{cor}\label{2024_03_25_11:50}
Let $F|K$ be a function field as in Theorem~\ref{2024_02_18_00:35}.
Then the product $\iota := c_1 B_1^2$ is an invariant of $F|K$.
If $\iota \neq 0$ then the group $\mathrm{Aut}(F|K)$ of automorphisms of $F|K$ is trivial.
If $\iota = 0$ then $\mathrm{Aut}(F|K)$ is isomorphic to $\Z/2\Z$, and it is generated by the transformation
\[ (x,z,y) \mapsto (x + c_1, z, y). \]
\end{cor}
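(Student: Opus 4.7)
The plan is to specialize Proposition~\ref{2024_03_25_19:55} to the case $F' = F$ and extract the automorphism group from the collapsed system of equations, while separately verifying invariance of $\iota$ from the transformation formulas. The argument is essentially a routine elimination, so the main effort is in carrying out the cascading constraints imposed by the hypothesis $A_2 \notin K^2$.

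For the invariance of $\iota = c_1 B_1^2$, I would apply Proposition~\ref{2024_03_25_19:55} to an arbitrary isomorphism between two such function fields: the formulas give $c_1' = \eps^2 c_1$ and $\eps B_1' = B_1$, so multiplying yields $c_1'(B_1')^2 = c_1 B_1^2$ at once. For the automorphism group I would set all primed coefficients equal to the unprimed ones in Proposition~\ref{2024_03_25_19:55}. The equation $c_1 = \eps^2 c_1$, combined with $c_1 \neq 0$ and characteristic two, forces $\eps = 1$. Then $\eps = \mu_4^2 + \mu_5^2 A_2 = 1$ together with $A_2 \notin K^2$ forces $(\mu_4,\mu_5) = (1,0)$. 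The equation $\eps^6 A_2 = A_2 + \ga^2$ gives $\ga = 0$, and expanding $\ga = \eps^{-1}(\mu_2^2 + \mu_3^2 A_2) = 0$ again with $A_2 \notin K^2$ forces $(\mu_2,\mu_3) = (0,0)$. The only free parameter left is $\al$, subject to the $c_0$-equation $\al(\al + c_1) = 0$ (so $\al \in \{0,c_1\}$) and the $B_0$-equation $\al B_1 = 0$.

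The case split on $\iota$ is now immediate: if $\iota \neq 0$ then $B_1 \neq 0$, forcing $\al = 0$ and the identity automorphism; if $\iota = 0$ then $B_1 = 0$ and both values of $\al$ are admissible, yielding $\mathrm{Aut}(F|K) \isom \Z/2\Z$. To identify the non-trivial automorphism explicitly I would substitute $\al = c_1$, $\mu_4 = 1$, and $\mu_2 = \mu_3 = \mu_5 = \ga = 0$ into the transformation formula of Proposition~\ref{2024_03_25_19:55}; since $\be$ and $\tau$ are both multiples of $\mu_5$ they vanish, and the transformation reduces to $(x,z,y) \mapsto (x+c_1,z,y)$ as claimed. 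The only possible obstacle is the bookkeeping of the cascading constraints; there is no conceptual difficulty, since the repeated use of $A_2 \notin K^2$ kills the potentially problematic parameters immediately.
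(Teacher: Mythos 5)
Your proposal is correct and is exactly the argument the paper intends: the corollary is stated as an immediate consequence of Proposition~\ref{2024_03_25_19:55}, obtained by multiplying the relations $c_1'=\eps^2 c_1$ and $\eps B_1'=B_1$ for invariance, and by specializing $F'=F$ and cascading the constraints ($\eps=1$, then $\mu_5=0$, $\mu_4=1$, $\ga=0$, $\mu_2=\mu_3=0$, $\al\in\{0,c_1\}$ with $\al B_1=0$) for the automorphism group. All your eliminations using $c_1\neq0$ and $A_2\notin K^2$ check out.
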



We now turn to the function fields $F|K$ whose only singular primes $\pp$ have the property that their restrictions $\pp_2$ are non-rational.
The theorem below complements \cite[Theorem~3.1~(ii)]{HiSt23}.

\begin{thm}\label{2021_05_21_18:40}
A one-dimensional separable function field $F|K$ of characteristic $p=2$ and genus $g = 3$ is geometrically rational and admits a prime $\pp$ such that $\de(\pp) = 3$, $\de(\pp_1) = 1$, $\pp_2$ is non-rational, and such that $\pp$ is not a canonical divisor, if and only if 
$F|K$ can be put into one of the following normal forms
\begin{enumerate}[\upshape (a)]
    \setcounter{enumi}{1}
    \item \label{2021_05_21_18:42}
    $F|K = K(x,w,z,y)|K$, where
    \[ w^2 = x + a_2 x^2, \quad z^2 = b_0 + b_2 x^2 + w, \quad y^2 = xz, \]
    and $a_2,b_0,b_2 \in K$ are constants satisfying $a_2 \notin K^2$.
    
    \item \label{2021_05_21_18:43}
    $F|K = K(x,w,z,y)|K$, where
    \[ w^2 = a_ 0 + x + a_2 x^2, \quad z^2 = b_1 w^2 + w, \quad y^2 = (c_3 + c_4 x + z) w, \]
    and $a_0,a_2,b_1,c_3,c_4 \in K$ are constants satisfying $a_2,b_1 \notin K^2$.

    \item \label{2021_05_21_18:45}
    $F|K = K(x,z,y)|K$, where    
    \[ z^4 = a_0 + x + a_2 x^2, \quad y^2 = c_0 + z + c_2 z^2, \]
    and $a_0,a_2,c_0,c_2 \in K$ are constants satisfying $a_2\notin K^2$ and $c_2 \in K^2 a_2$.
\end{enumerate}
In each case the singular prime $\pp$ is the only pole of the function $x$.
It has residue fields $\ka(\pp_3) = K$, $\ka(\pp_2) = K(a_2^{1/2})$,
and
\begin{enumerate}[\upshape (a)]
    \setcounter{enumi}{1}
    \item $\ka({\pp_1}) = K(a_2^{1/2}, b_2^{1/2})$, $\ka(\pp) = K(a_2^{1/2}, b_2^{1/4})$,
    \item $\ka({\pp_1}) = K(a_2^{1/2}, b_1^{1/2})$, $\ka(\pp) = K(a_2^{1/2}, b_1^{1/2}, (c_4 a_2^{1/2} + a_2 b_1^{1/2})^{1/2})$,
    \item $\ka(\pp_1) = \ka(\pp) = K(a_2^{1/2})$.
\end{enumerate}
\end{thm}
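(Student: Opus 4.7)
The plan is to follow the strategy employed in the proof of Theorem~\ref{2024_02_18_00:35}, but with the modifications forced by the assumption that $\pp_2$ is non-rational. First, I would invoke the companion classification \cite[Theorem~3.2~(ii)]{HiSt23}, which gives the possible normal forms for a quasi-elliptic function field $F_1|K$ whose singular prime $\pp_1$ satisfies $\de(\pp_1)=1$ and whose restriction $\pp_2$ is non-rational. This produces a trichotomy, and the three subcases map onto the items \ref{2021_05_21_18:42}, \ref{2021_05_21_18:43} and \ref{2021_05_21_18:45} of the present theorem. For each such presentation of $F_1|K$, I would use Riemann--Roch together with the residue-degree hierarchy $\ka(\pp_3)\su\ka(\pp_2)\su\ka(\pp_1)$ to compute explicit bases of $H^0(\pp_1^n)$ and $H^0(\pp_2^n)$ for small $n$, and derive the dimensions of $H^0(\pp^{ne})$, where $e$ is the ramification index of $\pp|\pp_1$.

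Having fixed the structure of $F_1|K$, I would next produce the missing generator $y$ of the quadratic extension $F|F_1$. As in the proof of Theorem~\ref{2024_02_18_00:35}, the non-canonical condition on $\pp$ forces $H^0(\pp^e)=H^0(\pp_1)$, while $\dim H^0(\pp^{2e})=6>4=\dim H^0(\pp_1^2)$ provides an element $y\in H^0(\pp^{2e})\setminus H^0(\pp_1^2)$ with $F=F_1(y)$. The square $y^2$ lies in $H^0(\pp_1^4)$, and expanding it in the basis of $H^0(\pp_1^4)$ dictated by the $F_1$-normal form yields a polynomial expression involving the unknown coefficients. The main obstacle is the local analysis at $\pp$ that follows: using \cite[Theorem~2.3]{BedSt87} and \cite[Proposition~4.1]{BedSt87}, I would compute $e$ and track the residues of the generators through the tower $\ka(\pp_3)\su\ka(\pp_2)\su\ka(\pp_1)\su\ka(\pp)$, imposing $\de(\pp)=3$ to cut out the admissible coefficients. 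The case analysis is delicate because the degree $[\ka(\pp_1):K]$ is $4$ in cases~\ref{2021_05_21_18:42} and~\ref{2021_05_21_18:43} but only $2$ in case~\ref{2021_05_21_18:45}, which alters both the dimension counts and the admissible shapes of $y^2$, and demands a careful separation of the three subcases.

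In parallel, I would translate the condition $g=3$ via Rosenlicht's genus drop formula and the Jacobian criterion \cite[Corollaries~4.5 and~4.6]{Sal11} into the non-singularity of the auxiliary primes where $dy^4$ (or its suitable analogue in each subcase) vanishes, yielding further relations between the coefficients that I would exploit by the symmetric-function substitutions employed in the proof of Theorem~\ref{2024_02_18_00:35}. Coordinate changes of the form $z\mapsto z+\al w+\be x+\ga$ and $y\mapsto y+(\text{lower-order polynomial})$ then absorb the remaining inessential parameters and bring the equations to the stated normal forms. Finally, I would verify the converse by showing, directly from each presentation, that $\de(\pp_1)=1$, $\de(\pp)=3$, that $\pp_2$ is non-rational with $\ka(\pp_2)=K(a_2^{1/2})$, and that $H^0(\pp)=K\oplus Kx$ so that $\pp$ is non-canonical; the residue-field formulas in the statement then follow from inspecting the residues of $w$, $z$ and $y$ at $\pp$ in each subcase.
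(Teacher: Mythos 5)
Your overall toolkit (Riemann--Roch to build $H^0(\pp_1^{ne_1})$, adjoining $y\in H^0(\pp^{ee_1})\setminus H^0(\pp_1^{e_1})$, the Bedoya--St\"ohr computations at $\pp$, the Jacobian criterion plus symmetric functions for $g=3$, and normalizing coordinate changes) is exactly what the paper uses. But the two structural decisions on which your plan hinges are wrong, and they would derail the argument.

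First, the trichotomy \ref{2021_05_21_18:42}/\ref{2021_05_21_18:43}/\ref{2021_05_21_18:45} does \emph{not} come from a trichotomy of normal forms for the quasi-elliptic pullback $F_1|K$. When $\pp_2$ is non-rational, $F_1|K$ admits a \emph{single} presentation $w^2=a_0+x+a_2x^2$, $z^2=b_0+b_1x+b_2x^2+w$ (it can be of type (ii) or (iii) of \cite[Theorem~2.1]{HiSt23}, but both types occur inside each of items \ref{2021_05_21_18:42} and \ref{2021_05_21_18:43}, depending on whether $e_1=1$ or $2$). The actual case division is: item \ref{2021_05_21_18:45} is the case where the divisor $\pp^e$ \emph{is} canonical, which is dispatched by Remark~\ref{2024_05_10_00:15} via a reduction to \cite[Theorem~3.1~(ii)]{HiSt23}; for $\pp^e$ non-canonical one writes $y^2=c_0+c_1x+c_2x^2+(c_3+c_4x)w+(c_5+c_6x+c_7w)z$ over the single normal form of $F_1$, and items \ref{2021_05_21_18:42} and \ref{2021_05_21_18:43} correspond to $c_7=0$ and $c_7\neq0$, i.e.\ to whether the polynomial $a(x)=(c_5^2+c_7^2a_0)+c_7^2x+(c_6^2+c_7^2a_2)x^2$ whose zeros must be non-singular is inseparable or separable. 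Your plan gives no mechanism for producing this split.

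Second, you assert that non-canonicity of $\pp$ forces $H^0(\pp^e)=H^0(\pp_1)$. In the non-rational-$\pp_2$ setting this is false: by \cite[Lemma~3.2]{HiSt23} the hypothesis only yields ``$e=2$ \emph{or} $\pp^e$ is non-canonical,'' and the case where $\pp^e$ is canonical with $e=2$ is precisely item \ref{2021_05_21_18:45}. Carrying your claim through would silently exclude that whole family (which, notably, is generated by only three functions and has $[\ka(\pp_1):K]=2$, in contrast to your dimension bookkeeping). Relatedly, the element $y$ must be taken in $H^0(\pp^{ee_1})\setminus H^0(\pp_1^{e_1})$ with $\dim H^0(\pp^{ee_1})=6>\dim H^0(\pp_1^{e_1})=4$; the divisor $\pp^{ee_1}$ has degree $8$ here, not $4$, so the numerology of the rational-$\pp_2$ proof does not transfer verbatim. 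Finally, the converse direction also requires producing the sixth section $u=yw/x$ (resp.\ $yz/w$) of $H^0(\pp^{ee_1})$ and checking $\dim H^0(\pp^e)<3$ when $e_1=2$, a step your outline does not anticipate.
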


For the sake of clarity we avoid using the label ``(a)'', since in the next section this will stand for the function fields in Theorem~\ref{2024_02_18_00:35}.

\begin{rem}\label{2024_05_10_00:15}
Let $e\in\{1,2\}$ be the ramification index of the extension $\pp|\pp_1$. 
By \cite[Lemma~3.2]{HiSt23},
the assumption that $\pp$ is not a canonical divisor means that
\[ e=2 \quad \text{or} \quad \text{$\pp^e$ is not a canonical divisor}. \]
Suppose that $\pp^e$ is a canonical divisor. In other words, let $F|K$ be a geometrically rational function field of genus $g=3$ with a prime $\pp$ such that $\de(\pp)=3$, $\de(\pp_1)=1$, $\pp_2$ is non-rational, and such that the divisor $\pp^e$ is canonical.
Arguing as in the proof of \cite[Theorem~3.1~(ii)]{HiSt23}, by replacing $\pp$ with $\pp^e$ at each instance where $\pp$ is referred to as a divisor, and by removing the proof that ``$\pp|\pp_1$ is unramified'', 
one verifies that these assumptions mean that $F|K$ admits the following normal form
\begin{equation*}
    F|K=K(x,y)|K, \quad \text{where }z^4 = a_0 + x + a_2 x^2, \, y^2 = c_0 + c_1 x + z + c_2 z^2,
\end{equation*}
and $a_0,a_2,c_0,c_1,c_2 \in K$ are constants satisfying $a_2 \notin K^2$.
It also follows that $e=2$, i.e., $\pp$ is not a canonical divisor, if and only if
\begin{equation*}
    c_1=0 \quad \text{and} \quad c_2 \in K^2 + K^2 a_2,
\end{equation*}
in which case
we may normalize the coefficients in such a way that $c_2\in K^2 a_2$, thereby obtaining item~\ref{2021_05_21_18:45} in the theorem.
Consequently, item~\ref{2021_05_21_18:45} in Theorem~\ref{2021_05_21_18:40} occurs if and only if the power $\pp^e$ of the singular prime $\pp$ is a canonical divisor.
\end{rem}

\begin{proof}
In view of Remark~\ref{2024_05_10_00:15}, it suffices to treat the case where the divisor $\pp^e$ is non-canonical, where $e$ is the ramification index of $\pp|\pp_1$.
So let $F|K$ be a function field of genus $g = 3$ and let $\pp$ be a prime such that $\de(\pp) = 3$, $\de(\pp_1) = 1$, $\pp_2$ is non-rational, and such that the divisor $\pp^e$ is non-canonical. 
Since the restricted prime $\pp_3$ is rational, the prime $\pp_2$ is unramified over $F_3$ and has degree $\deg(\pp_2)=2$ .
We denote by $e_1$ the ramification index of $\pp_1|\pp_2$.

As explained in \cite[Remark~2.2]{HiSt23}, since the restricted prime $\pp_2$ is non-rational
the quasi-elliptic Frobenius pullback $F_1|K$ admits the following normal form
\begin{equation*}
    F_1|K=K(x,w,z)|K, \quad\text{where} \quad w^2 = a_0 + x + a_2 x^2, \, z^2 = b_0 + b_1 x +b_2 x^2 + w,
\end{equation*}
and $a_i,b_i \in K$ are constants satisfying $a_2\notin K^2$. 
The singular prime $\pp_1$ of $F_1|K$ is the only pole of the function $x$, and it has residue fields
\begin{equation}\label{2023_12_18_15:00}
    \ka(\pp_1)=K(a_2^{1/2},b_2^{1/2}), \quad \ka(\pp_2)=K(a_2^{1/2}), \quad \ka(\pp_3)=K.
\end{equation}
The functions $x$, $w$, $z$ are separating variables of the Frobenius pullbacks $F_3|K=K(x)|K$, $F_2|K=K(x,w)|K$, $F_1|K$ respectively,
and according to \cite[Remark~2.3]{HiSt23} they satisfy the incidence properties 
$x\in H^0(\pp_3) \setminus K$, 
$w\in H^0(\pp_2) \setminus H^0(\pp_3)$, 
$z\in H^0(\pp_1^{e_1}) \setminus H^0(\pp_2)$,
or more precisely
\begin{align*}
    H^0(\pp_3) &= K \oplus Kx, \quad
    H^0(\pp_2) = K \oplus Kx \oplus K w, \quad
    H^0(\pp_1^{e_1}) = K \oplus Kx \oplus K w \oplus K z.
\end{align*}
Moreover, since $\dim H^0(\pp_1^{2 e_1}) = \deg(\pp_1^{2 e_1}) = 8$ we also get
\begin{align}\label{2023_06_29_18:55}
    H^0(\pp_1^{2 e_1}) &= K \oplus Kx \oplus Kx^2 \oplus K w \oplus K x w \oplus K z \oplus K x z \oplus K w z.
\end{align}

By the Riemann--Roch theorem, since $F|K$ has genus $g = 3$ and the divisor $\pp^{ee_1}$ has degree $8$ one has
\[ \dim H^0 (\pp^{n e e_1}) = 8n - 2 \ \text{ for all $n\geq 1$.} \]
As is clear from $\dim H^0(\pp^{ee_1})=6 > \dim H^0(\pp_1^{e_1})=4$ and $H^0(\pp^{ee_1}) \cap F_1 = H^0(\pp_1^{e_1})$, there is a function $y$ in $H^0(\pp^{ee_1}) \setminus H^0(\pp_1^{e_1})$, which is a separating variable of $F|K$, that is, \[ F = F_1(y)=K(x,w,z,y). \]
Since its square $y^2$ lies in $H^0(\pp^{2ee_1}) \cap F_1 = H^0(\pp_1^{2e_1})$, but not in $F_2=K(x,w)$ as $y^2$ is a separating variable of $F_1|K$, there exist constants $c_i\in K$ with $(c_5, c_6, c_7)\neq(0,0,0)$ such that
\[ y^2 = c_0 + c_1 x + c_2 x^2 + (c_3 + c_4 x)w + (c_5 + c_6 x + c_7 w)z. \]

To study the singular prime $\pp$ of $F|K$ we introduce the functions $\breve{x} := x^{-1} \in F_3$, $\breve{w} := w x^{-1} \in F_2$, $\breve{z} := z x^{-1} \in F_1$ and $\cy := yx^{-1} \in F$, which satisfy the equations
\begin{align*}
    \breve{w}^2 &= a_2 + \breve{x} + a_0 \breve{x}^2, \quad \breve{z}^2 = b_2 + b_1 \breve{x} +b_0 \breve{x}^2 + \breve{x} \breve{w}, \\
    \cy^2 &= c_2 + c_1 \cx + c_0 \cx^2 + (c_4 + c_3 \cx)\breve{w} + (c_6 + c_5 \cx + c_7 \breve{w})\breve{z}.
\end{align*}
Note that $\cx$ is a local parameter at both $\pp_2$ and $\pp_3$, hence
\[ \breve{w}(\pp)^2 = a_2 \notin K^2, \quad \breve{z}(\pp)^2 = b_2, \quad \cy(\pp)^2 = c_2 + c_4 \breve{w}(\pp) + (c_6 + c_7 \breve{w}(\pp)) \breve{z}(\pp), \]
and
\[ \ka(\pp_3) = K, \quad \ka(\pp_2) = K(\breve{w}(\pp)), \quad \ka(\pp_1) = K(\breve{w}(\pp),\breve{z}(\pp)). \]
Furthermore, the differential $d\cy^8$ of $F_3|K=K(\cx)|K$ takes the form
\[ d\cy^8 = \big( ( c_6^4 + c_7^4 a_2^2) \cx^2 + c_7^4 \cx^4 + (c_5^4 + c_7^4 a_0^2)\cx^6 \big) d \cx.  \]

We claim that $c_6^2 + c_7^2 a_2 \neq 0$. 
Indeed, assume the contrary $c_6^2 + c_7^2 a_2 = 0$.
Since $a_2 \notin K^2$, this means that $c_6 = c_7 = 0$.
Then $c_5 \neq 0$ and so we may normalize $c_5 = 1$ by substituting $x$, $w$, $z$ with $c_5^{-4} x$, $c_5^{-2} w$, $c_5^{-1} z$ respectively. 
Since $v_{\pp_3}(d\cy^8) = v_{\pp_3}(\cx^6 d\breve{x}) = 6$, the value $\cy(\pp)$ of $\cy$ lies in $\ka({\pp_1}) = K(\breve{w}(\pp),\breve{z}(\pp))$, for otherwise $\pp$ is inertial over $F_1$ and $\de(\pp) = 2 \de(\pp_1) + \frac12 v_{\pp_3}(d\cy^8) = 5$ by \cite[Theorem~2.3]{BedSt87}, contradicting the assumption $\de(\pp)=3$. 
Thus $t(\pp) = 0$ for some $t$ in $\cy + K + K\breve{w} + K\breve{z} + K\breve{w}\breve{z}$, and it follows from
\[ v_{\pp_3}(dt^8) = v_{\pp_3}(d\cy^8) = 6 < 8 \]
that the prime $\pp_1$ is ramified over $F_2$, i.e., $e_1=2$, 
because otherwise $\pp$ is ramified over $F_1$ with local parameter $t$ and $\de(\pp) = 2 \de(\pp_1) + \frac12 v_{\pp_3}(dt^8) = 5$ by \cite[Theorem~2.3]{BedSt87}, a contradiction.
We infer that both $\breve{y}(\pp)$ and $\breve{z}(\pp)$ lie in $\ka(\pp_2)=K(\breve{w}(\pp))$,
and in turn $c_4=0$ as $\breve w(\pp)\notin K$.
Therefore $\breve{y}(\pp) = \al + \be \breve{w}(\pp)$ and $\breve{z}(\pp) = \te + \ga \breve{w}(\pp)$ for some $\al,\be,\te,\ga \in K$, 
hence by replacing $y$ and $z$ with $y + \al x + \be w$ and $z + \te x + \ga w$ respectively 
we may assume $\breve{z}(\pp)=\breve{y}(\pp)=0$, i.e., $b_2=c_2=0$.
Subtracting $b_0 + b_1 x$ from $w$ we can further normalize $b_0 = b_1 = 0$, i.e., $w = z^2$,
and so the divisor $\pp^e$ is canonical by Remark~\ref{2024_05_10_00:15}, a contradiction. This proves the claim.

It follows that $v_{\pp_3}(d\cy^8)=2$,
which by \cite[Theorem~2.3]{BedSt87} implies that $\de(\pp) = 2 \cdot 1 + \frac12 \cdot 2 = 3$ whenever $\cy(\pp)$ lies outside $\ka({\pp_1}) = K(\breve{w}(\pp),\breve{z}(\pp))$.
If this does not happen, say $t(\pp) = 0$ for some $t$ in $\cy + K + K\breve{w} + K\breve{z} + K\breve{w}\breve{z}$, then the prime $\pp$ is ramified over $F_1$ with local parameter $t$ since
\[ v_{\pp_3}(dt^8) = v_{\pp_3}(d\cy^8) = 2 < 4, \]
and hence $\de(\pp) = 2 \cdot 1 + \frac12 v_{\pp_3}(dt^8) = 3$ by \cite[Theorem~2.3]{BedSt87}. 
We have thus verified that our normal form already ensures that the hypothesis $\de(\pp)=3$ is satisfied. 
So it remains to study the assumptions that $F|K$ has genus $g = 3$ and that the divisor $\pp^e$ is non-canonical.
Note that the above also shows that $\ka(\pp) = K(\breve w(\pp),\breve z(\pp), \breve y(\pp))$.

By the Jacobian criterion and the genus drop formula, since $F|K=K(x,y)|K$
the assumption $g=3$ means that the zeros of the function
$\frac{dy^8}{dx} = (c_5 + c_6 x + c_7 w)^4$
are non-singular primes, that is, for every zero $\fq$ of the function
\[ 
a(x) := (c_5 + c_6 x)^2 + c_7^2 (a_0 + x + a_2 x^2 ) 
= (c_5^2 + c_7^2 a_0) + c_7^2 x + (c_6^2 + c_7^2 a_2)x^2 
\]
we have $\de(\fq)=0$. 
Note that $\de(\fq_1)=0$ holds already, since $F_1|K=K(x,w,z)|K$ is quasi-elliptic.
Two major cases are to be considered: $c_7 = 0$ and $c_7\neq 0$.
The former will correspond to item~\ref{2021_05_21_18:42} in the theorem, and the latter to item~\ref{2021_05_21_18:43}.

Assume first that $c_7 = 0$, so that $c_6 \neq 0$. 
One can then normalize $c_6 = 1$ and $c_5 = 0$ by replacing $x$, $w$, $z$, $y$ with 
$c_6^{-1} (c_6^{-3} x + c_5)$, $c_6^{-2}w$, 
$c_6^{-1} z$, $c_6^{-2} y$ respectively. 
Let $\fq$ be the only zero of $a(x) = x^2$, so that the function $x$ is a local parameter at the rational prime $\fq_3$ of $F_3|K = K(x)|K$. 
We want to see when $\de(\fq) = 0$ occurs. 
Since
\[ v_{\fq_3}(dy^8) = v_{\fq_3}(x^4 dx) = 4 > 0, \]
it is clear from \cite[Theorem~2.3]{BedSt87} that $\de(\fq)=0$ implies $y(\fq) \in \ka({\fq_1})$. 
Note that $\de(\fq)=0$ also implies $w(\fq) \in K$. 
Indeed, if $w(\fq) \notin K$
then the prime $\fq_1$ is unramified over $F_3$ with $\ka(\fq_1)=K(z(\fq))$ because $z(\fq)^4 = b_0^2 + w(\fq)^2 \notin K^2$, and in turn $y(\fq) \in \ka(\fq_1)$ yields $t(\fq) = 0$ for some $t$ in $y + K + Kz + Kz^2 + Kz^3$, whence $\fq$ is ramified over $F_1$ with local parameter $t$ as
\[ v_{\fq_3}(dt^8) = v_{\fq_3}(dy^8) = 4 < 8, \]
and therefore $\de(\fq) = \frac12 v_{\fq_3}(dt^8) = 2 > 0$ by \cite[Theorem~2.3]{BedSt87}.
Accordingly, we may assume $y(\fq) \in \ka(\fq_1)$ and $w(\fq) \in K$.
Subtracting from $w$ an element of $K$ we may further assume $w(\fq)=0$, that is, $a_0=0$ and $\fq_2$ is ramified (and therefore rational) over $F_3$ with local parameter $w$.
As the differential $dx$ of $F_2|K=K(x,w)|K$ vanishes, so that $v_{\fq_2} (d z^2) = v_{\fq_2}(d w) = 0$, applying \cite[Proposition~4.1]{BedSt87} we see that the prime $\fq_1$ is rational (and ramified over $F_2$) if and only if $z(\fq) = b_0^{1/2}$ belongs to $K$.
In particular $\ka({\fq_1}) = K(z(\fq))$, and so the condition $y(\fq) \in \ka({\fq_1})$ means that by replacing $y$ with an element of $y + K + Kz$ we can normalize $y(\fq) = 0$, i.e., $c_0 = 0$.

When $\fq_1$ is not rational, i.e., $z(\fq) = b_0^{1/2} \notin K$, 
we may assume $c_3 = 0$, for otherwise $\fq$ is ramified over $F_1$ with local parameter $y$ and $\de(\fq) = \frac12 v_{\fq_3}(dy^8) = 2 > 0$ by \cite[Theorem~2.3]{BedSt87}.
Now $v_{\fq_2}(y^4 + (c_1^2 + b_0) w^4)>4$ and $v_{\fq_2}(dy^4) = v_{\fq_2}(x^2 d w) = 4$, the latter due to the vanishing of the differential $dx$ of $F_2|K = K(x,w)$.
Since $(\frac{y}{w})(\fq)\notin \ka(\fq_1)=K(b_0^{1/2})$ as $(\frac{y}{w})(\fq)^4 = c_1^2 + b_0 \notin K^2$, 
this implies $\de(\fq) = \frac12 v_{\fq_2} \big( d(\frac yw)^4 \big) = 0$
by \cite[Theorem~2.3]{BedSt87}.

When $\fq_1$ is rational, i.e., $z(\fq)=b_0^{1/2} \in K$, we normalize $b_0 = 0$ by subtracting $b_0^{1/2}$ from $z$, and so $z$ is a local parameter at $\fq_1$. 
Since $v_{\fq_1}(y^2 + c_3 z^2) \geq 4$ and $v_{\fq_1}(dy^2) = v_{\fq_1}(x \, d z)=4$, the latter due to the vanishing of the differentials $dx$ and $dw$ of $F_1|K=K(x,w,z)|K$,
we conclude from \cite[Proposition~4.1]{BedSt87} that $\de(\fq)=0$ if and only if $c_3 \in K^2$, in which case we normalize $c_3=0$ by substituting $y$ with $y + c_3^{1/2} z$.

To sum up, in the first case $c_7=0$ the assumption $g=3$ has been translated into the normalizations $a_0=c_0=c_3=c_5=c_7=0$, $c_6=1$.
Replacing $z$ with $z + c_1 + c_2 x + c_4 w$ we normalize $c_1=c_2=c_4=0$, and by substituting $w$ with $w + b_1 x$ we normalize as well $b_1=0$. This yields the normal form in item~\ref{2021_05_21_18:42}.

 \medskip

Now we treat the second case $c_7 \neq 0$, where the polynomial $a(x)$ is separable. 
Replacing $x$, $w$, $z$, $y$ with $c_7^{-4} x$, 
$c_7^{-1} (c_7^{-1} w + c_5 + c_6 c_7^{-4} x)$, 
$c_7^{-1} z$, $c_7^{-1} y$ respectively we may normalize $c_7 = 1$, $c_5 = c_6 = 0$.
We claim that
\[ g = 3 \ \text{ if and only if } \  b(r),b(s),c(r),c(s) \in L^2, \]
where $b(x):= b_0 + b_1 x + b_2 x^2$, $c(x) := c_0 + c_1 x + c_2 x^2$, and $r,s \in L$ are the two roots of the polynomial 
$ a(x) = a_0 + x + a_2 x^2 $
in the separable closure $L$ of $K$.
By passing from $K$ to $L$ we may assume that $K$ is separably closed, i.e., $L=K$.
Then the function $a(x)$ has precisely two zeros (one for each root $r,s\in K$), and we must prove that for every such zero $\fq$ we have
\[ \de(\fq)=0 \ \text{ if and only if } \ z(\fq),y(\fq) \in K. \]
Let $r$ be the root of $a(x)$ corresponding to the zero $\fq$.
Since $x(\fq) = r \in K$, to see the claim one may assume $x(\fq) = 0$, that is, $a_0 = 0$ and $x$ is a local parameter at the rational prime $\fq_3$ of $F_3|K = K(x)|K$.
Then the prime $\fq_2$ is ramified (and therefore rational) over $F_3$ with local parameter $w$. 
As the differential $dx$ of $F_2|K = K(x,w)|K$ vanishes, hence
\[ v_{\fq_2}(dy^4) = v_{\fq_2}(w^2 dw) = 2 > 0, \]
it follows from \cite[Theorem~2.3]{BedSt87} that $y(\fq) \in \ka({\fq_1})$ whenever $\de(\fq)= 0$. 
Assuming that $z(\fq) \notin K$, the prime $\fq_1$ is unramified over $F_2$ with $\ka({\fq_1}) = K(z(\fq))$, and if we suppose $\de(\fq) = 0$ then $y(\fq) \in \ka({\fq_1})$ implies that $t(\fq) = 0$ for some $t$ in $K + Kz$, 
so that $\fq$ is ramified over $F_1$ with local parameter $t$ as
\[ v_{\fq_2}(dt^4) = v_{\fq_2}(dy^4) = 2 < 4, \]
and therefore $\de({\fq}) = \frac12 v_{\fq_2}(dt^4) = 1$, a contradiction. Thus $\de(\fq) = 0$ implies $z(\fq) \in K$. 
So in order to prove the claim we may suppose $z(\fq) = 0$, that is, $b_0 = 0$ and
$\fq_1$ is ramified (and hence rational) over $F_2$ with local parameter $z$.
Since $v_{\fq_1}(dy^2) = v_{\fq_1}(w dz) = 2$ as both differentials $dx$ and $dw$ of $F_1|K = K(x,w,z)|K$ vanish, we see from
\cite[Proposition~4.1]{BedSt87} that $\de(\fq) = 0$ if and only if $y(\fq) \in K$, thus proving the claim.

We next reformulate the above conditions $b(r),b(s),c(r),c(s) \in L^2$ as relations between the coefficients $b_i,c_i \in K$, by using symmetric polynomials.
Write
\[ q := r + s = a_2^{-1} \in K, \qquad t := rs = a_0 a_2^{-1} \in K, \]
and observe that the four symmetric polynomial expressions
\begin{align*}
	b(r) + b(s) &= b_1 q + b_2 q^2, \\
	r^2 b(r) + s^2 b(s) &= b_2 q^2 + b_1(q^3 + qt) + b_2 q^4, \\
	c(r) + c(s) &= c_1 q + c_2 q^2, \\
	r^2 c(r) + s^2 c(s) &= c_0 q^2 + c_1(q^3 + qt) + c_2 q^4,
\end{align*}
lie in $L^2 \cap K = K^2$, say they are equal to $\al^2$, $\be^2$, $\te^2$, $\ga^2$ respectively. Since $q \neq 0$ we can perform four normalizations as follows: 
replace $z$ with $z + \frac\al q x$, so that $b(r) + b(s) = 0$; 
substitute $z$ with $z + \frac\be q$, so that $r^2 b(r) + s^2 b(s) = 0$; 
replace $y$ with $y + \frac\te q x$, so that $c(r) + c(s) = 0$; 
substitute $y$ with $y + \frac\ga q$, so that $r^2 c(r) + s^2 c(s) = 0$. 
Now the four polynomial expressions vanish, 
i.e., $b(r) = b(s) = c(r) = c(s) = 0$, which means that $a(x)= a_0 + x + a_2 x^2$ divides both $b(x)$ and $c(x)$,
i.e.,
\[ z^2 = b_1 w^2 + w, \quad y^2 = c_1 w^2 + (c_3 + c_4 x) w + w z. \]
Replacing $z$ with $z + c_1 w$ we normalize $c_1 = 0$, thus obtaining a normal form as in item~\ref{2021_05_21_18:43} but without the condition $b_1 \notin K^2$.
To see that this requirement must indeed be part of our normal form we use the assumption that the divisor $\pp^e$ is non-canonical.
In view of Remark~\ref{2024_05_10_00:15} it suffices to observe that if $b_1\in K^2$ then by subtracting $b_1^{1/2} w$ from $z$ we get $w=z^2$ and $\big(\frac{y}{z}\big)^2 = c_3 + c_4 x + z + b_1^{1/2} z^2$.

To complete the proof 
it remains to verify that the normal forms in~\ref{2021_05_21_18:42} and~\ref{2021_05_21_18:43} guarantee that the divisor $\pp^e$ is non-canonical.
To this end we first find the space of global sections of $\pp^{ee_1}$. 
Since $\dim H^0(\pp^{ee_1})=6$ and $H^0(\pp_1^{e_1}) \oplus Ky \su H^0(\pp^{ee_1})$,
we must determine an element $u \in F$ such that
\begin{equation*}
    H^0(\pp^{ee_1}) = K \oplus Kx \oplus Kw \oplus Kz \oplus Ky \oplus Ku.
\end{equation*}
The function
\[ u:= 
\begin{cases}
	\frac{yw}x & \text{if item~\ref{2021_05_21_18:42},} \\
	\frac{yz}w & \text{if item~\ref{2021_05_21_18:43},}
\end{cases}
\]
fulfills this requirement, because the square
\[ u^2 = 
\begin{cases}
	(1 + a_2 x) z & \text{if item~\ref{2021_05_21_18:42},} \\
	(1 + b_1 w) ( c_3 + c_4 x + z) & \text{if item~\ref{2021_05_21_18:43},}
\end{cases}
\]
belongs to $H^0(\pp_1^{2 e_1}) = H^0(\pp^{2 e e_1}) \cap F_1$ (see~\eqref{2023_06_29_18:55}), i.e., $u \in H^0(\pp^{e e_1})$, while on the other hand the squares $1$, $x^2$, $w^2$, $z^2$, $y^2$, $u^2$ are linearly independent over $K^2$, i.e., $1$, $x$, $w$, $z$, $y$, $u$ are linearly independent over $K$.

We finally show that the divisor $\pp^e$ is non-canonical for both normal forms.
Seeking a contradiction let us assume that it is canonical, i.e.,
\[ \deg(\pp^e) = 2g-2 = 4 \quad \text{and} \quad \dim H^0(\pp^e) = g = 3. \]
As the divisor $\pp^{ee_1}$ has degree $8$, the condition $\deg(\pp^e) = 4$ rephrases as $e_1=2$, which means that the coefficient $b_2$ in our normal forms lies in $K^2(a_2)$  (see \eqref{2023_12_18_15:00}), say $b_2 = r_0^2 + r_1^2 a_2$.
(Note that for~\ref{2021_05_21_18:43} we have $b_2=a_2 b_1$.)
We check that in this case $\dim H^0(\pp^e) < 3$.
Since $H^0(\pp^e)$ is contained in $H^0(\pp^{ee_1})$, any element $h \in H^0(\pp^e)$ can be written as $h = d_1  + d_2 x + d_3 w + d_4 z + d_5 y + d_6 u $.
Moreover its square $h^2$ belongs to $H^0({\pp^{2e}})\cap F_1 = H^0(\pp_1^2) = K \oplus Kx \oplus Kw \oplus Kz$.
Using the condition $a_2\notin K^2$ (and $b_1\notin K^2$ for~\ref{2021_05_21_18:43}), we obtain $d_5=d_6 = 0$, $d_2 = d_4 r_0$, $d_3=d_4 r_1$, and therefore $h\in K \oplus K (r_0 x + r_1 w + z)$, as desired.
\end{proof}

\begin{rem}\label{2024_05_10_01:35}
Let $F|K=K(x,w,z,y)|K$ be a function field as in Theorem~\ref{2021_05_21_18:40}, item~\ref{2021_05_21_18:42} or~\ref{2021_05_21_18:43}.
As the proof of the theorem shows,
the first Frobenius pullback $F_1|K=K(x,w,z)|K$ is quasi-elliptic, the second Frobenius pullback $F_2|K=K(x,w)|K$ has genus $g_2=0$, and the third Frobenius pullback $F_3|K=K(x)|K$ is rational.
Moreover $e_2=1$, $\divv_\infty(x) = \pp^{e e_1}$, $\deg(\pp_1)=4/e_1$, and $\deg(\pp)=8/ee_1$, where $e$, $e_1$ and $e_2$ are the ramification indices of $\pp$, $\pp_1$ and $\pp_2$ over $F$, $F_1$ and $F_2$ respectively.
Furthermore,
\begin{align*}
    H^0(\pp_3) &= K \oplus Kx, \\
    H^0(\pp_2) &= K \oplus Kx \oplus K w, \\
    H^0(\pp_1^{e_1}) &= K \oplus Kx \oplus K w \oplus K z, \\
    H^0(\pp^{ee_1}) &= K \oplus Kx \oplus Kw \oplus Kz \oplus Ky \oplus Ku,
\end{align*}
where 
\[ u:= 
\begin{cases}
	\frac{yw}x & \text{if item~\ref{2021_05_21_18:42},} \\
	\frac{yz}w & \text{if item~\ref{2021_05_21_18:43}.}
\end{cases}
\]
Depending on the value of $e_1 \in \{1,2\}$, which in turn depends on whether 
$b_2 \in K^2(a_2)$ (if item~\ref{2021_05_21_18:42}) or $b_1 a_2 \in K^2(a_2)$ (if item~\ref{2021_05_21_18:43}),
the quasi-elliptic Frobenius pullback $F_1|K$ can be of type (ii) or (iii) in \cite[Theorem~2.1]{HiSt23}.
\end{rem}


\begin{prop}\label{2024_05_10_01:40}
    Let $F|K$ and $F'|K$ be two function fields as in Theorem~\ref{2021_05_21_18:40}.
    
    \textnormal{(b)}
    Assume that $F|K$ and $F'|K$ are of type~\ref{2021_05_21_18:42}.
    Then $F|K$ and $F'|K$ are isomorphic if and only if there exist constants $\mu_1,\mu_2,\mu_4,\mu_5 \in K$ satisfying $(\mu_4,\mu_5) \neq (0,0)$ and
    \begin{align*}
        \eps^4 a_2' &= a_2, \qquad
        \eps^6 b_2' = b_2 + \ga^2, \qquad
        b_0' = \eps^2 b_0 + \eps \mu_4 \mu_5 + \mu_2^4 + \mu_5^4 b_2,
    \end{align*}
    where $\eps := \mu_4^2 + \mu_5^2 a_2 \neq 0$ and $\ga := \eps^{-1} (\mu_1^2 + \mu_2^2 a_2)$.    
    The corresponding $K$-isomorphisms $F'\overset\sim\to F$ are given by the transformations
    \[ (x',w',z',y') \mapsto ( \eps^3 (\mu_5^2 + \eps x), \eps(\mu_4 \mu_5 + \eps w), \eps (\be + \ga x + z), \eps^2 (\tau + \mu_1 x + \mu_2 w + \mu_4 y + \mu_5 u)), \]
    where $\be := \eps^{-1} (\ga \mu_5^2 + \mu_2^2)$ and $\tau := \eps^{-1} (\mu_1 \mu_5^2 + \mu_2 \mu_4 \mu_5)$.
    In particular, the group $\mathrm{Aut}(F|K)$ of automorphism of $F|K$ is trivial.
    
    \textnormal{(c)}
    Assume that $F|K$ and $F'|K$ are of type~\ref{2021_05_21_18:43}.
    Then $F|K$ and $F'|K$ are isomorphic if and only if there exist constants $\al,\mu_2,\mu_3,\mu_4,\mu_5 \in K$ satisfying $(\mu_4,\mu_5) \neq (0,0)$ and
    \begin{align*}
       \eps^4 a_2' &= a_2, \qquad
       \eps^{-2} a_0' = \eps^2 (a_0 + \al^2 a_2 + \al) + \mu_5^4, \qquad
       \eps^3 c_4' = c_4, \\
       \eps^2 b_1' &= b_1 + \ga^2, \qquad
       c_3' = \eps (c_3 + \al c_4) + \mu_3^2 + \mu_4 \mu_5,
    \end{align*}
    where $\eps := \mu_4^2 + \mu_5^2 b_1 \neq 0$ and
    $\ga := \eps^{-1} (\mu_2^2 + \mu_3^2 b_1)$.
    The corresponding $K$-isomorphisms $F'\overset\sim\to F$ are given by the transformations
    \[ (x',w',z',y') \mapsto ( \eps^4 (\al + x), \eps(\mu_5^2 + \eps w) ,\eps (\beta + \ga w + z), \eps (\tau + \mu_2 w + \mu_3 z + \mu_4 y + \mu_5 u)), \]
    where $\beta :=\eps^{-1} (\ga \mu_5^2 + \mu_4 \mu_5)$ and
    $\tau :=\eps^{-1} (\mu_2 \mu_5^2 + \mu_3 \mu_4 \mu_5)$.
    In particular, the quotient $\iota := c_4^4/a_2^3$ is an invariant of $F|K$.
    If $\iota\neq0$ then the group $\mathrm{Aut}(F|K)$ of automorphisms of $F|K$ is trivial. If $\iota=0$ then the group $\mathrm{Aut}(F|K)$ is isomorphic to $\Z/2\Z$ and it is generated by the transformation
    \[ (x,w,z,y) \mapsto (x + a_2^{-1}, w, z, y). \]
\end{prop}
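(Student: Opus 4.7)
The plan is to mirror the strategy of Proposition~\ref{2024_03_25_19:55}. Since $\pp'$ and $\pp$ are the unique singular primes of $F'|K$ and $F|K$ respectively, any $K$-isomorphism $\sig\colon F'\overset\sim\to F$ must map $\pp'$ to $\pp$ and hence induce $K$-linear isomorphisms $H^0(\pp_m'^n)\overset\sim\to H^0(\pp_m^n)$ for all $m,n\geq 0$. Combined with the explicit bases listed in Remark~\ref{2024_05_10_01:35} and the incidence properties of the generators, this forces $\sig(x'),\sig(w'),\sig(z'),\sig(y')$ to be $K$-linear combinations of $\{1,x\}$, $\{1,x,w\}$, $\{1,x,w,z\}$ and $\{1,x,w,z,y,u\}$ respectively, with the leading coefficients in $x$, $w$, $z$ non-zero and the pair of coefficients in $(y,u)$ distinct from $(0,0)$.

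The next step is to substitute these $K$-linear expansions into the three defining polynomial equations of $F'|K$, use the defining equations of $F|K$ to reduce $w^2,z^2,y^2,u^2$ to elements of $H^0(\pp_1^{2e_1})$, and then equate coefficients in the $K$-basis of $H^0(\pp_1^{2e_1})$ (and of $H^0(\pp_1^{4 e_1})$ for the $y'$-equation) displayed in~\eqref{2023_06_29_18:55}. This produces a system of polynomial equations relating $a_i',b_i',c_i'$ to $a_i,b_i,c_i$ and to the $12$ coefficients of the expansions. In each case, the equation for $\sig(w')^2$ determines the transformation on $\sig(x')$ and yields the first relation $\eps^4 a_2' = a_2$; the equation for $\sig(z')^2$ yields the $b$-relation ($\eps^6 b_2' = b_2 + \ga^2$ in case~\ref{2021_05_21_18:42}, $\eps^2 b_1' = b_1 + \ga^2$ in case~\ref{2021_05_21_18:43}); and the equation for $\sig(y')^2$ produces the remaining relations on $b_0'$ (respectively $c_3',c_4'$) together with a compatibility condition that reduces the expansion to the announced $4$- or $5$-parameter family. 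Solving the auxiliary equations for the expansion coefficients in terms of the free parameters and the derived abbreviations $\eps,\ga,\be,\tau$ should recover the displayed transformation formulas, and the converse direction is then verified by direct substitution.

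For the consequences on invariants and $\mathrm{Aut}(F|K)$, I would specialize $F'=F$. In case~\ref{2021_05_21_18:42}, the condition $a_2\notin K^2$ together with $\eps^4 a_2 = a_2$ forces $\eps=1$; pushing this back through the remaining relations yields $\ga = 0$, $\mu_1=\mu_2=0$, $\mu_5=0$ and $\mu_4=1$, so only the identity automorphism survives. In case~\ref{2021_05_21_18:43}, the pair of relations $\eps^4 a_2' = a_2$ and $\eps^3 c_4' = c_4$ immediately implies $c_4'^4/a_2'^3 = c_4^4/a_2^3$, identifying $\iota := c_4^4/a_2^3$ as an invariant. If $\iota\neq 0$ the two relations force $\eps=1$ and the system collapses to the identity, whereas if $c_4 = 0$ the freedom $\al = a_2^{-1}$ opens up and produces the announced involution $x\mapsto x + a_2^{-1}$ (fixing $w,z,y$), and no further automorphism is possible because the nonresidue condition $b_1\notin K^2$ still forces $\ga = 0$ while $a_2\notin K^2$ pins $\eps=1$.

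The main obstacle is purely organizational: the coefficient-matching produces a sizeable system of polynomial identities, and eliminating the auxiliary expansion coefficients in favour of the displayed free parameters requires careful repeated use of $a_2\notin K^2$ (and $b_1\notin K^2$ in case~\ref{2021_05_21_18:43}) to separate the $K^2$- and $a_2 K^2$-components of the square identities. Once the elimination is carried out in the same order as in Proposition~\ref{2024_03_25_19:55}---first $\sig(x')$, then $\sig(w')$, then $\sig(z')$, then $\sig(y')$---the remaining computations are mechanical.
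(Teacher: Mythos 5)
Your proposal is correct and follows exactly the route the paper intends: the authors themselves only remark that the proposition ``can be proved by using the incidence properties of the functions $x,w,z,y$, as in the proof of Proposition~\ref{2024_03_25_19:55}'', and your substitution-and-coefficient-matching argument, together with the specialization $F'=F$ to extract the invariant $\iota$ and the automorphism groups, is precisely that. The only slight imprecision is attributing $\eps=1$ to the pair of relations in case~\ref{2021_05_21_18:43} (it already follows from $\eps^4 a_2'=a_2$ alone, while $c_4\neq0$ is what kills the freedom $\al=a_2^{-1}$), but this does not affect the argument.
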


The proposition can be proved by using the incidence properties of the functions $x,w,z,y$, as in the proof of Proposition~\ref{2024_03_25_19:55}. 
We will check in the next section (see Corollary~\ref{2024_08_25_19:45}) that function fields of type~\ref{2021_05_21_18:42} and~\ref{2021_05_21_18:43} are indeed non-isomorphic.

\begin{rem}\label{2024_05_10_20:05}
For the function fields in Theorem~\ref{2021_05_21_18:40}~\ref{2021_05_21_18:45} there are statements similar to those in Remark~\ref{2024_05_10_01:35} and Proposition~\ref{2024_05_10_01:40}, which come from analogous assertions in \cite[Remark~3.4 and Proposition~3.5]{HiSt23} (see Remark~\ref{2024_05_10_00:15}). 
The statement in \cite[Proposition~3.5~(ii)]{HiSt23} is valid here with no modifications.
In \cite[Remark~3.4]{HiSt23} the following alterations must be made in the discussion about the function fields in \cite[Theorem~3.1~(ii)]{HiSt23}: $e=2$, $\mathrm{div}_\infty(x) = \pp^{2e} = \pp^4$, and $H^0(\pp^e) = H^0(\pp^2) = K \oplus Kz \oplus Ky$.
\end{rem}

\begin{rem}\label{2024_08_25_19:50}
As follows from \cite[Section~3]{HiSt23} and the results in this section, the one-dimensional separable function fields $F|K$ of genera $g=3$, $g_1=1$, $g_2=\ov g = 0$ in characteristic $p=2$ can be grouped into five classes
\begin{enumerate}[\upshape (i)]
    \item \label{2024_08_25_19:51}
    function fields in \cite[Theorem~3.1~(i)]{HiSt23},
    \item \label{2024_08_25_19:52}
    function fields in \cite[Theorem~3.1~(ii)]{HiSt23} and Theorem~\ref{2021_05_21_18:40}~\ref{2021_05_21_18:45},
    \item \label{2024_08_25_19:53}
    function fields in Theorem~\ref{2024_02_18_00:35},
    \item \label{2024_08_25_19:54}
    function fields in Theorem~\ref{2021_05_21_18:40}~\ref{2021_05_21_18:42},
    \item \label{2024_08_25_19:55}
    function fields in Theorem~\ref{2021_05_21_18:40}~\ref{2021_05_21_18:43},
\end{enumerate}
that are distinguished by the properties
\begin{enumerate}[(i)]
    \item the prime $\pp_2$ is rational and the divisor $\pp$ is canonical,
    \item the prime $\pp_2$ is non-rational and the divisor $\pp^e$ is canonical,
    \item the prime $\pp_2$ is rational and the divisor $\pp$ is non-canonical,
    \item[(iv),(v)] the prime $\pp_2$ is non-rational and the divisor $\pp^e$ is non-canonical.
\end{enumerate}
Here $\pp$ is the only singular prime of $F|K$ and $e\in\{1,2\}$ is the ramification index of $\pp|\pp_1$.
Note that the divisor $\pp$ is canonical if and only if the divisor $\pp^e$ is canonical and $e=1$ (see \cite[Lemma~3.2]{HiSt23}).
Note also that $e=1$ in cases~\ref{2024_08_25_19:51} and~\ref{2024_08_25_19:53}, and that in case~\ref{2024_08_25_19:52} we have $e=1$ (resp. $e=2$) if $F|K$ satisfies \cite[Theorem~3.1~(ii)]{HiSt23} (resp. Theorem~\ref{2021_05_21_18:40}~\ref{2021_05_21_18:45}).
The quasi-elliptic Frobenius pullbacks $F_1|K$ of $F|K$ are of type
\begin{itemize}
    \item \cite[Theorem~2.1~(i)]{HiSt23} in cases~\ref{2024_08_25_19:51} and~\ref{2024_08_25_19:53}, 
    \item \cite[Theorem~2.1~(iii)]{HiSt23} in case~\ref{2024_08_25_19:52}, 
    \item \cite[Theorem~2.1, (ii) or (iii)]{HiSt23} in cases~\ref{2024_08_25_19:54} and~\ref{2024_08_25_19:55};
\end{itemize}
see \cite[Remark~3.4]{HiSt23} and Remarks~\ref{2024_08_26_14:35}, \ref{2024_05_10_01:35} and~\ref{2024_05_10_20:05}.

In the cases~\ref{2024_08_25_19:52}, \ref{2024_08_25_19:53} and~\ref{2024_08_25_19:55} the expressions $\iota = c_1^4 / a_2^3$, $\iota = c_1 B_1^2$ and $\iota = c_4^4/a_2^3$ are invariants of $F|K$ respectively; 
see \cite[Proposition~3.5]{HiSt23}, Corollary~\ref{2024_03_25_11:50}, Proposition~\ref{2024_05_10_01:40} and Remark~\ref{2024_05_10_20:05}.
As $\iota$ can take any value in $K$ (for~\ref{2024_08_25_19:53}) or $K \setminus K^2$ (for~\ref{2024_08_25_19:52} and~\ref{2024_08_25_19:55}) we deduce that there are infinitely many isomorphism classes of function fields in~\ref{2024_08_25_19:52}, \ref{2024_08_25_19:53} and~\ref{2024_08_25_19:55}.
Moreover, it will follow from the next section (see also \cite[Section~4]{HiSt23}) that in these three cases~\ref{2024_08_25_19:52}, \ref{2024_08_25_19:53} and~\ref{2024_08_25_19:55} the function field $F|K$ is non-hyperelliptic if and only if the invariant $\iota$ does not vanish, while in the remaining two cases~\ref{2024_08_25_19:51} and~\ref{2024_08_25_19:54}, where we do not have an invariant, the function field $F|K$ is always non-hyperelliptic.
\end{rem}

Let us finish this section by noting that the hypotheses $g=3$, $g_1=1$, $g_2=0$ already imply that the characteristic $p$ is equal to $2$ (see \cite[Corollary~2.7~(iii)]{HiSt22}).
Thus the condition $p=2$ in Remark~\ref{2024_08_25_19:50} can be removed. 
It also follows (see the beginning of this section) that we can remove the hypothesis $p=2$ in Theorems~\ref{2024_02_18_00:35} and~\ref{2021_05_21_18:40}.

\section{Regular but non-smooth plane quartic curves}
\label{2024_04_05_18:30}

In this section we study the function fields $F|K$ of the preceding section that are non-hyperelliptic. 
We realize their regular proper models as plane quartic curves in $\PP^2(K)$, via canonical embeddings.

Recall that the \emph{canonical field} of a one-dimensional function field $F|K$ is the subfield generated by the quotients of the non-zero holomorphic differentials of $F|K$. 
Equivalently, the canonical field of $F|K$ is the subfield generated over $K$ by the global sections of any canonical effective divisor.
A function field $F|K$ of genus $g\geq 2$ is called \emph{hyperelliptic} if it admits a quadratic subfield of genus zero;
this subfield is uniquely determined because it coincides with the canonical field of $F|K$.
The function field $F|K$ is non-hyperelliptic if and only if its canonical field coincides with the entire function field.

\begin{prop}
Every function field $F|K$ in Theorem~\ref{2021_05_21_18:40}~\ref{2021_05_21_18:45} is hyperelliptic.
\end{prop}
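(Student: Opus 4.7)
The approach is to exhibit $L := K(y, z) \subseteq F$ as a quadratic subfield of genus zero; by the definition recalled above this establishes that $F|K$ is hyperelliptic. To check $[F:L] = 2$, note that $F = K(x, z, y) = L(x)$ and that $x$ satisfies the polynomial $a_2 X^2 + X + (z^4 + a_0) \in K(z)[X] \subseteq L[X]$, which is separable of degree two (its $X$-derivative is $1$, and $a_2 \neq 0$ since $a_2 \notin K^2$). If $x$ lay in $L$, we would have $F = L$ and so $g(F) = g(L) = 0$, contradicting $g(F) = 3$; hence $[F:L] = 2$.

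The main task is to show $g(L) = 0$. My plan is to realize $L$ as the function field of the projective plane conic $C_0 \subseteq \PP^2_K$ obtained by homogenizing the relation $y^2 = c_0 + z + c_2 z^2$:
\[ C_0 : \ Y^2 + c_2 Z^2 + Z W + c_0 W^2 = 0. \]
This requires two verifications. First, the defining polynomial is $K$-irreducible: if $c_2 = 0$ it is linear in $Z$, while if $c_2 \neq 0$ then the hypothesis $c_2 = \lambda^2 a_2$ with $a_2 \notin K^2$ forces $c_2 \notin K^2$, so any $K$-factorization (which would require $\sqrt{c_2} \in K$) is impossible. Second, $C_0$ must be smooth: in characteristic two the gradient of the defining polynomial equals $(0, W, Z)$, which vanishes only at $[1:0:0]$, a point not on $C_0$ (the polynomial evaluates to $1$ there). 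The genus-degree formula for smooth plane curves then yields $g(C_0) = \binom{d-1}{2} = 0$ for $d = 2$, so $g(L) = 0$.

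The principal obstacle is the smoothness verification in characteristic two, where Jacobian criteria can degenerate for quadric-type polynomials due to inseparability. In our situation the cross term $Z W$ fortunately ensures that the gradient has nontrivial $Z$- and $W$-components, so the only candidate singular point $[1:0:0]$ lies off the conic, making the check routine.
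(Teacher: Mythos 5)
Your proof is correct and follows essentially the same route as the paper's: both exhibit $K(z,y)$ as a genus-zero quadratic subfield, using the relation $a_2x^2+x+(a_0+z^4)=0$ to bound $[F:K(z,y)]$ by $2$ and the hypothesis $g(F)=3$ to rule out equality. The only difference is that you spell out the characteristic-two smoothness and irreducibility checks for the conic $y^2=c_0+z+c_2z^2$, which the paper leaves implicit.
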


\begin{proof}
Since $z^4 = a_0 + x + a_2 x^2$ and $a_2 \neq 0$ the degree of the extension $K(z,y) \su F=K(z,y,x)$ is equal to $1$ or $2$. 
The subfield $K(z,y)$ has genus zero because its generators satisfy the quadratic relation $y^2 = c_0 + z + c_2 z^2$, and so $F \neq K(z,y)$.
\end{proof}

We wish to identify which function fields in Theorem~\ref{2024_02_18_00:35} and Theorem~\ref{2021_05_21_18:40}, \ref{2021_05_21_18:42} and~\ref{2021_05_21_18:43}, are non-hyperelliptic.
Unlike our analysis in \cite[Section~4]{HiSt23}, here we have no a priori knowledge of canonical divisors, nor of their global sections. 
However, we can still work with the spaces of holomorphic differentials.

\begin{lem}\label{2024_10_16_15:05}
Let $F|K$ be a function field as in Theorem~\ref{2024_02_18_00:35}, Theorem~\ref{2021_05_21_18:40}~\ref{2021_05_21_18:42} or Theorem~\ref{2021_05_21_18:40}~\ref{2021_05_21_18:43}.
Then the divisor of the differential $dy$ is given by
\begin{enumerate}[\upshape (a)]
    \item $\mathrm{div}(dy) = \mathrm{div}_0 (c_0 + c_1 x + x^2 )^{1/2}$ in Theorem~\ref{2024_02_18_00:35},
    \item $\mathrm{div}(dy) = \mathrm{div}_0(x)^{1/2}$ in Theorem~\ref{2021_05_21_18:40}~\ref{2021_05_21_18:42}, 
    \item $\mathrm{div}(dy) = \mathrm{div}_0(w)^{1/2}$ in Theorem~\ref{2021_05_21_18:40}~\ref{2021_05_21_18:43}.
\end{enumerate}
\end{lem}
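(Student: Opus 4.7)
The plan is to compute $\mathrm{div}(dy)$ in each of the three cases by descending to the rational function field $F_n = K(x)$ at the bottom of the purely inseparable tower $F \supset F_1 \supset F_2 \supset F_3$. The preliminary observation is that $x, w, z$ all lie in $F_1 = F^p K = \ker(d\colon F \to \Omega_{F/K})$, so $dx = dz = dw = 0$ in $\Omega_{F/K}$. Since $y$ is a separating variable of $F|K$, the differential $dy$ freely generates the one-dimensional $F$-vector space $\Omega_{F/K}$, and for $f = f_0 + f_1 y \in F_1 \oplus F_1 y = F$ one has the product-rule formula $df = f_1 \, dy$.

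I would next compute $y^{p^n}$ explicitly as an element of $F_n = K(x)$ in each case. In~(a), squaring $y^2 = c(x)(B(x)+z)$ with $c(x) := c_0 + c_1 x + x^2$ and invoking $z^2 = c(x) A(x)$ yields $y^4 = c(x)^2 B(x)^2 + c(x)^3 A(x) \in K(x)$; differentiating with respect to $x$ and using the cancellation $A(x) + c(x) A_2 = c_1^{-1}$ gives $(y^4)' = c(x)^2$, so $dy^4 = c(x)^2 \, dx$ in $\Omega_{F_2/K}$. In~(b) and~(c), analogous iterated squarings of $y^2 = xz$ and $y^2 = (c_3 + c_4 x + z) w$, together with the relations for $z^2$ and $w^2$, yield $y^8 \in K(x)$ with $(y^8)' = x^4$ and $(y^8)' = w^4$ respectively, giving $dy^8 = x^4 \, dx$ and $dy^8 = w^4 \, dx$ in $\Omega_{F_3/K}$.

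To translate these lower-level identities into $\mathrm{div}(dy)$ on $F$, I would apply the Bedoya--St\"ohr algorithm \cite{BedSt87} at each prime $\fq$ of $F$: the algorithm expresses $v_\fq(dy)$ in terms of $v_{\fq_n}(dy^{p^n})$ together with the ramification and residue-field data for the tower $\fq \mid \cdots \mid \fq_n$. Using the explicit description of the singular prime $\pp$ and the ramification data from Remarks~\ref{2024_08_26_14:35} and~\ref{2024_05_10_01:35}, a case-by-case local analysis would show that $dy$ vanishes exactly at the primes of $F$ lying over the zeros of $c(x)$, $x$, or $w$ respectively, each with multiplicity equal to half the valuation of the relevant function at that prime. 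A global degree check that $\deg \mathrm{div}(dy) = 2g - 2 = 4$ then confirms the assertion in all three cases.

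The main obstacle will be the third step: while $dy^{p^n}$ has an explicit form in $\Omega_{F_n/K}$, transferring this to $\mathrm{div}(dy)$ requires careful bookkeeping of the purely inseparable ramification across the tower, and the local analysis at the singular prime $\pp$---where $\Omega_{F/K}$ fails to be locally free---demands a delicate application of the Bedoya--St\"ohr formalism in order to conclude that $\pp$ contributes nothing to $\mathrm{div}(dy)$.
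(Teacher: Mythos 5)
Your proposal follows essentially the same route as the paper: compute $dy^{p^n}$ explicitly as a differential of the rational field $K(x)$ at the bottom of the tower (obtaining $c(x)^2\,dx$, $x^4\,dx$ and $w^4\,dx$ respectively, exactly as in the paper), then transfer to $\mathrm{div}(dy)$ prime by prime via \cite[Theorem~2.7]{BedSt87}, treating the singular prime $\pp$ separately, where the correction term $2\de(\pp)$ cancels the pole order of $dy^{p^n}$ and yields $v_\pp(dy)=0$. The computations you sketch are correct, and the concluding degree check $\deg\mathrm{div}(dy)=2g-2=4$ is a harmless extra confirmation not needed in the paper's argument.
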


\begin{proof}
By passing to the separable closure of $K$ we may assume that $K$ is separably closed.
For each prime $\fq $ of $F|K$ different from the only singular prime $\pp$ we have 
\[ \deg(\fq) \cdot v_\fq(dy) = \deg(\fq_2) \cdot v_{\fq_2} (dy^4) = \deg(\fq_3) \cdot v_{\fq_3} (dy^8) \] 
(see \cite[Theorem~2.7]{BedSt87}).
If $F|K$ is given as in Theorem~\ref{2024_02_18_00:35}, then
the differential $dy^4$ of $F_2|K=K(x)|K$ is equal to $(c_0 + c_1 x + x^2)^2 dx$, so it follows that at the primes $\fq$ different from the pole $\pp$ of the function $x$ the divisor of $dy$ is equal to the divisor $\mathrm{div}_0 ( c_0 + c_1 x + x^2 )^{1/2}$.
If $F|K$ satisfies Theorem~\ref{2021_05_21_18:40}~\ref{2021_05_21_18:42}
(resp. Theorem~\ref{2021_05_21_18:40}~\ref{2021_05_21_18:43}),
then the differential $dy^8$ of $F_3|K = K(x)|K$ is equal to $x^4 dx$ (resp. $w^4 dx = (a_0 + x + a_2 x^2)^2 dx$), hence we conclude that at the primes $\fq$ different from the pole $\pp$ of $x$ the divisor of $dy$ is equal to the divisor $\mathrm{div}_0(x)^{1/2}$ (resp. $\mathrm{div}_0(w)^{1/2}$).

It remains to note that by \cite[Theorem~2.7]{BedSt87},
at the singular prime $\pp$ the product $\deg(\pp) \cdot v_\pp(dy)$ is equal to $2 \de(\pp) + v_{\pp_2}(dy^4)$ if $F|K$ satisfies Theorem~\ref{2024_02_18_00:35}, or to $2 \de(\pp) + v_{\pp_3}(dy^8)$ if $F|K$ satisfies Theorem~\ref{2021_05_21_18:40}, \ref{2021_05_21_18:42} or~\ref{2021_05_21_18:43}, that is, $v_\pp(dy)=0$.
\end{proof}

\begin{prop}\label{2024_08_04_19:35}
Let $F|K$ be a function field as in Theorem~\ref{2024_02_18_00:35}, Theorem~\ref{2021_05_21_18:40}~\ref{2021_05_21_18:42} or Theorem~\ref{2021_05_21_18:40}~\ref{2021_05_21_18:43}.
Then a basis of the vector space of holomorphic differentials of $F|K$ is given by
\begin{enumerate}[\upshape (a)]
    \item $dy, \, \breve z \, dy, \, \breve y \, dy$, where $\breve z:= z \cdot c(x)^{-1}$, $\breve y := y \cdot c(x)^{-1}$, $c(x) := c_0 + c_1 x + x^2$, in Theorem~\ref{2024_02_18_00:35};
    \item $dy$, $\breve w \, dy$, $\breve y \, dy$, where $\breve w := wx^{-1}$, $\breve y := y x^{-1}$, in Theorem~\ref{2021_05_21_18:40}~\ref{2021_05_21_18:42};
    \item $dy$, $\breve z \, dy$, $\breve y \, dy$, where $\breve z := zw^{-1}$, $\breve y := yw^{-1}$, in Theorem~\ref{2021_05_21_18:40}~\ref{2021_05_21_18:43}.
\end{enumerate}
In each case the first two differentials form a basis of the vector space of exact holomorphic differentials of $F|K$.
\end{prop}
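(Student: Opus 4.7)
Since $y$ is a separating variable of $F|K$ in all three cases (established inside the proofs of Theorems~\ref{2024_02_18_00:35} and~\ref{2021_05_21_18:40}), the module $\Omega^1_{F/K}$ is free of rank one over $F$ with basis $dy$, and the kernel of the universal derivation $d \colon F \to \Omega^1_{F/K}$ coincides with the subfield $F_1 = K F^2$. In particular $dx = dz = dw = 0$, since these generators already lie in $F_1$.

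To show that the three listed differentials form a basis of holomorphic differentials, I would first verify holomorphicity by combining the divisor of $dy$ from the preceding lemma with valuation computations: at each prime where $c(x)$ (resp.\ $x$, $w$) vanishes, the poles of $\breve{z}$ and $\breve{y}$ (resp.\ $\breve{w}$, $\breve{y}$) exactly cancel the zeros of $dy$, using the defining equations of $z$ and $y$; at the singular prime $\pp$ the valuations are all zero, essentially because the leading terms in $z^2 = c(x)A(x)$ and $y^2 = c(x)(B(x)+z)$ (and their analogues in (b), (c)) match the order of $c(x)$ (resp.\ $x$, $w$) at $\pp$. Linear independence of $1, \breve{z}, \breve{y}$ (resp.\ $1, \breve{w}, \breve{y}$) in $F$ follows from the explicit bases of $H^0(\pp^{ee_1})$ in Remarks~\ref{2024_08_26_14:35} and~\ref{2024_05_10_01:35}, which exhibit $u = yz/c(x)$ (resp.\ $yw/x$, $yz/w$) as an additional independent generator beyond $1, x, \ldots, z, y$. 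Since $\dim_K H^0(\Omega^1_{F/K}) = g = 3$, three linearly independent holomorphic differentials automatically form a basis.

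Exactness of $dy = d(y)$ is immediate, and a short characteristic-$2$ Leibniz computation shows
\[ d(u) = d\!\left(\tfrac{yz}{c(x)}\right) = \tfrac{1}{c(x)}\bigl(y\, dz + z\, dy\bigr) = \breve{z}\, dy, \]
with the analogous primitives $yw/x$ and $yz/w$ handling cases (b) and (c); here $dz = 0$ and $d(c(x)^{-1}) = 0$ by the first paragraph. To identify the full space of exact holomorphic differentials, I would use that $y \notin F^2$ (since $y \notin F_1 \supseteq F^2$) and $[F:F^2] = 2$, so $\{1, y\}$ is an $F^2$-basis of $F$; expanding $f = f_0^2 + f_1^2 y$ yields $df = f_1^2\, dy$, hence a differential $h\, dy$ is exact if and only if $h \in F^2 \subseteq F_1$. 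Since $\breve{y}$ lies in $F \setminus F_1$ (being an $F_1^\times$-multiple of $y \notin F_1$), the differential $\breve{y}\, dy$ is not exact, and combined with the basis from the previous step this forces the exact holomorphic differentials to be the two-dimensional $K$-span of $dy$ and the second listed differential.

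The main technical obstacle is the valuation bookkeeping at the finite zeros of $c(x)$ (or $x$, $w$), which requires subcase analysis depending on the ramification of $\fq|\fq_1$ and on the possible vanishing of $A(\fq)$ or $B(\fq)+z(\fq)$; once the key identification $\ker d = F_1$ is in hand, the remaining steps are essentially conceptual.
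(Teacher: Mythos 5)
Your overall route is the paper's: bound the pole divisors of $\breve z$ and $\breve y$ by $\divv(dy)=\divv_0(c(x))^{1/2}$ (resp.\ $\divv_0(x)^{1/2}$, $\divv_0(w)^{1/2}$) to get holomorphicity, conclude by linear independence plus $\dim = g = 3$, and use the decomposition $F = F_1 \oplus F_1 y$ to handle exactness. In fact the paper reads the pole divisors directly off the relations $\breve z^2 = A_2 + c_1^{-1}c(x)^{-1}$ and $\breve y^2 = (B_0+B_1x)c(x)^{-1} + \breve z$ (and their analogues), which makes the ``valuation bookkeeping'' you worry about in your last paragraph essentially a one-line computation with no subcases on ramification or on the vanishing of $A(\fq)$ or $B(\fq)+z(\fq)$.

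There is, however, one genuine error in your exactness step. You claim $[F:F^2]=2$, take $\{1,y\}$ as an $F^2$-basis of $F$, and conclude that $h\,dy$ is exact if and only if $h\in F^2$. Over the imperfect base field $K$ this is false: $K\cap F^2 = K^2$ (as $K$ is algebraically closed in $F$), so $[F:F^2]=2\,[K:K^2]>2$, since $A_2\notin K^2$ forces $K\neq K^2$. The correct statement --- which you yourself record in your first paragraph --- is that $\ker d = F_1 = KF^2$, so that $F=F_1\oplus F_1 y$ and the exact differentials are precisely $F_1\,dy$, i.e.\ $h\,dy$ is exact if and only if $h\in F_1$. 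Your criterion ``$h\in F^2$'' contradicts your own computation $d(u)=\breve z\,dy$: the function $\breve z$ lies in $F_1$ but not in $F^2$ (its square $A_2+c_1^{-1}c(x)^{-1}$ is not a fourth power), so under your criterion $\breve z\,dy$ would not be exact. With the corrected criterion the argument closes exactly as in the paper: $dy$ and $\breve z\,dy$ (resp.\ $\breve w\,dy$) have coefficients in $F_1$ and are exact, while any holomorphic differential $(\al+\be\breve z+\ga\breve y)\,dy$ with $\ga\neq0$ has coefficient outside $F_1$ because $\breve y$ is an $F_1^\times$-multiple of $y\notin F_1$, so the exact holomorphic differentials are spanned by the first two basis elements.
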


\begin{proof}
Suppose $F|K$ is given as in Theorem~\ref{2024_02_18_00:35}.
Since the functions $\breve y$ and $\breve z$ fulfill the relations $\breve z^2 = A_2 + c_1^{-1} \cdot c(x)^{-1}$ and $\breve y^2 = (B_0 + B_1 x) \cdot c(x)^{-1} + \breve z$,
their pole divisors satisfy $\mathrm{div}_\infty(\breve z) = \divv_0(c(x))^{1/2}$ and $\mathrm{div}_\infty(\breve y) \leq \divv_0(c(x))^{1/2}$.
By the preceding lemma, this implies that the three differentials are holomorphic. As they are linearly independent and as their number is equal to the dimension $g=3$ of the vector space of holomorphic differentials, they form a basis of this vector space.

For $F|K$ as in Theorem~\ref{2021_05_21_18:40}, \ref{2021_05_21_18:42} or~\ref{2021_05_21_18:43}, we argue analogously.
If $F|K$ is of type~\ref{2021_05_21_18:42} (resp. type~\ref{2021_05_21_18:43}), then the relations
$\breve w^2 = a_2 + x^{-1}$ and $\breve y^4 = b_2 + b_0 x^{-2} + \breve w x^{-1}$
(resp. $\breve z^2 = b_1 + w^{-1}$, $w^2 = a_0 + x + a_2 x^2$ and $\breve y^2 = c_3 w^{-1} + c_4 x w^{-1} + \breve z$)
show that
$\divv_\infty(\breve w) = \divv_0(x)^{1/2}$ and $\divv_\infty(\breve y) \leq \divv_0(x)^{1/2}$
(resp. $\divv_\infty (\breve z) = \divv_0(w)^{1/2}$ and $\divv_\infty(\breve y)\leq \divv_0(w)^{1/2}$),
that is, the three differentials are holomorphic and form a basis of the vector space of holomorphic differentials of $F|K$.

It remains to note that as $F=F_1 \oplus F_1 y$, the space of exact differentials of $F|K$ is equal to $F_1 \, dy$.
Hence in each case the first two elements of the basis are exact, while the third element is not exact.
\end{proof}

We denote by $H$ the \textit{canonical field} of $F|K$, i.e., the subfield of $F|K$ generated by the quotients of the non-zero holomorphic differentials of $F|K$.
Furthermore, we denote by $E$ the subfield of $H$ generated by the quotients of the non-zero \textit{exact} holomorphic differentials of $F|K$, and call it the \textit{pseudo-canonical field} of $F|K$.
Due to our description of the bases in the preceding Proposition, we obtain

\begin{cor} $ $
\begin{enumerate}[\upshape (a)]
    \item $H = K(\breve z, \breve y)$, $E=K(\breve z)$.
    \item $H=K(\breve w,\breve y)$, $E=K(\breve w)$.
    \item $H=K(\breve z,\breve y)$, $E=K(\breve z)$.
\end{enumerate}
\end{cor}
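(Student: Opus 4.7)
The plan is to deduce the corollary as an immediate consequence of the preceding proposition together with the definitions of $H$ and $E$. By definition, the canonical field $H$ is generated over $K$ by the quotients $\om_1/\om_2$ of non-zero holomorphic differentials of $F|K$, and the pseudo-canonical field $E$ is generated analogously but restricting to non-zero \emph{exact} holomorphic differentials.

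For case (a), the proposition exhibits $\{dy,\,\breve z\,dy,\,\breve y\,dy\}$ as a basis of the space of holomorphic differentials. Dividing the last two basis elements by the first gives $\breve z$ and $\breve y$, so $K(\breve z,\breve y)\subseteq H$. Conversely, every holomorphic differential is a $K$-linear combination of the three basis elements, hence every quotient of two such differentials is a ratio of two $K$-linear combinations of $\{1,\breve z,\breve y\}$, which lies in $K(\breve z,\breve y)$; therefore $H\subseteq K(\breve z,\breve y)$. For $E$, the same argument applied to the two-dimensional subspace of exact differentials spanned by $dy$ and $\breve z\,dy$ yields $E=K(\breve z)$.

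Cases (b) and (c) follow by the identical bookkeeping with the bases supplied by the proposition: in (b) one uses $\{dy,\breve w\,dy,\breve y\,dy\}$ with exact differentials $\{dy,\breve w\,dy\}$, producing $H=K(\breve w,\breve y)$ and $E=K(\breve w)$; in (c) one uses $\{dy,\breve z\,dy,\breve y\,dy\}$ with exact differentials $\{dy,\breve z\,dy\}$, giving $H=K(\breve z,\breve y)$ and $E=K(\breve z)$. I anticipate no real obstacle, since the entire argument is a direct translation of the basis statements in the preceding proposition into statements about the subfields generated by the relevant quotients.
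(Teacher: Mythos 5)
Your proposal is correct and matches the paper's (implicit) argument: the paper states the corollary as an immediate consequence of the basis description in the preceding proposition, and your bookkeeping — dividing the basis differentials by $dy$ to get the generators, and observing that any quotient of holomorphic (resp.\ exact holomorphic) differentials is a ratio of $K$-linear combinations of $1,\breve z,\breve y$ (resp.\ $1,\breve z$) — is exactly that consequence spelled out.
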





\begin{cor}\label{2024_08_25_19:45}
The pseudo-canonical field $E$ of $F|K$ is a rational quadratic subfield of the Frobenius pullback $F_1$.
More precisely, depending on whether $F|K$ is given as in Theorem~\ref{2024_02_18_00:35}, Theorem~\ref{2021_05_21_18:40}~\ref{2021_05_21_18:42} or Theorem~\ref{2021_05_21_18:40}~\ref{2021_05_21_18:43}, the following holds
\begin{enumerate}[\upshape (a)]
    \item the quadratic extension $F_1|E$ is separable and the function $x$ is a generator,
    \item the quadratic extension $F_1|E$ is inseparable and $E$ is equal to the second Frobenius pullback $F_2$,
    \item the quadratic extension $F_1|E$ is separable and the function $x$ is a generator.
\end{enumerate}
\end{cor}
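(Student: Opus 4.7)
The plan is to build everything on the explicit generators furnished by the preceding corollary: $E = K(\breve{z})$ in cases~(a) and~(c), and $E = K(\breve{w})$ in case~(b). In each case the chosen generator lies in $F_1$, so once it is seen to be transcendental over $K$ we immediately obtain that $E$ is a rational subfield of $F_1$. Transcendence is read off the relations used in the proof of Proposition~\ref{2024_08_04_19:35}: namely $\breve{z}^2 = A_2 + c_1^{-1}c(x)^{-1}$, $\breve{w}^2 = a_2 + x^{-1}$, and $\breve{z}^2 = b_1 + w^{-1}$ in cases~(a), (b), (c) respectively; none of the right-hand sides is a constant in $K$.

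To pin down the quadratic degree and the (in)separability I would solve each relation for an auxiliary function. In case~(a), rearranging the relation gives $(x^2 + c_1 x + c_0)(\breve{z}^2 + A_2) = c_1^{-1}$, so $x$ is a root of a polynomial of degree $2$ over $E = K(\breve{z})$ whose derivative in $x$ is the nonzero constant $c_1$. Hence $F_1 = K(x,z) = K(x,\breve{z}) = E(x)$ is a separable extension of $E$ of degree at most $2$, generated by $x$. Case~(c) is parallel: $\breve{z}^2 = b_1 + w^{-1}$ yields $w = (\breve{z}^2 + b_1)^{-1} \in E$, whence $z = \breve{z}\,w \in E$, and the defining equation $w^2 = a_0 + x + a_2 x^2$ exhibits $x$ as a root of a polynomial of degree $2$ in $x$ over $E$ with $x$-derivative $1$, so again $F_1 = E(x)$ separably.

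Case~(b) is qualitatively different because the relation $\breve{w}^2 = a_2 + x^{-1}$ already gives $x = (\breve{w}^2 + a_2)^{-1} \in K(\breve{w}^2) \subset E$; then $w = x\breve{w} \in E$ as well, so $F_2 = K(x,w) \subseteq E$. The opposite inclusion is immediate since $\breve{w} = w/x \in F_2$, whence $E = F_2$. Consequently $F_1 = F_2(z)$ with $z^2 = b_0 + b_2 x^2 + w \in F_2 = E$, so $F_1|E$ is purely inseparable of degree $2$.

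The only step requiring any care is the verification that the quadratic extensions in (a) and (c) are genuinely of degree $2$, i.e., that $x \notin E$. This I would deduce from genera: $F_1|K$ is quasi-elliptic of genus $g_1 = 1$ while $E|K$ is rational, so $F_1 \neq E$ and the quadratic polynomials displayed above are the minimal polynomials of $x$ over $E$. Everything else reduces to direct manipulation of the defining equations of $F|K$.
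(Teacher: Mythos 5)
Your proposal is correct and follows essentially the same route as the paper: read $E=K(\breve z)$ (resp.\ $K(\breve w)$) off the preceding corollary, note $E\neq F_1$ because $E$ is rational while $g_1=1$, exhibit the explicit quadratic equation for $x$ over $E$ with nonzero $x$-derivative in cases (a) and (c), and show $x,w\in E$ hence $E=F_2$ in case (b). Your added details (the transcendence check, the explicit derivative computation, and $w=x\breve w\in E$ in case (b)) only make explicit what the paper leaves implicit.
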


In particular $E\neq H$, because $[F:E]=4$ and $[F:H] \leq 2$.

\begin{proof}
By the preceding corollary the pseudo-canonical field $E$ is a rational subfield of the Frobenius pullback $F_1$.
It is properly contained in $F_1$ because $F_1|K$ has genus $g_1=1\neq 0$.

If $F|K$ in given as in Theorem~\ref{2024_02_18_00:35} then $E(x) = K(\breve z,x) =  K(x,z)=F_1$, where $x$ satisfies over $E$ the quadratic separable equation $x^2 + c_1 x + c_2 + c_1^{-1}(A_2 + \breve z^2)^{-1} = 0$.
For $F|K$ as in Theorem~\ref{2021_05_21_18:40}~\ref{2021_05_21_18:42} we have $x = (a_2 + \breve w^2)^{-1}$, and so $E=K(\breve w) = K(x,w) = F_2$.
Finally, for $F|K$ as in Theorem~\ref{2021_05_21_18:40}~\ref{2021_05_21_18:43} we see that 
$ w = (\breve z^2 + b_1)^{-1}$, hence $E=K(\breve z) = K(w,z)$ and $E(x) = K(x,w,z) = F_1$, where $x$ satisfies over $E$ the quadratic separable equation $a_2 x^2 + x + a_0 + w^2 = 0$.
\end{proof}

\begin{cor}\label{2022_03_11_14:35}
The function field $F|K$ is given as in Theorem~\ref{2021_05_21_18:40}~\ref{2021_05_21_18:42}  
if and only if the pseudo-canonical field $E$ is equal to the second Frobenius pullback $F_2$. 
\end{cor}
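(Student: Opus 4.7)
The plan is to obtain both directions from the preceding Corollary~\ref{2024_08_25_19:45}, so essentially no new computation is needed. The forward implication---if $F|K$ is as in Theorem~\ref{2021_05_21_18:40}~\ref{2021_05_21_18:42} then $E=F_2$---is already recorded verbatim as part~(b) of that corollary, and I would just cite it.

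For the converse, I would argue by elimination among the three types under consideration in Section~\ref{2024_04_05_18:30}: those of Theorem~\ref{2024_02_18_00:35}, Theorem~\ref{2021_05_21_18:40}~\ref{2021_05_21_18:42}, and Theorem~\ref{2021_05_21_18:40}~\ref{2021_05_21_18:43}. Assuming $E=F_2$, I would aim to rule out types (a) and (c). The key point is that parts~(a) and~(c) of Corollary~\ref{2024_08_25_19:45} state that in those cases $F_1|E$ is a separable quadratic extension, with $x$ as a generator. On the other hand, $F_1|F_2$ is purely inseparable of degree $p=2$ by the very definition $F_2=F_1^p K$, the degree being exactly~$2$ since $F_1 \neq F_2$ (their genera being $g_1=1$ and $g_2=0$ respectively). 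Identifying $E$ with $F_2$ would thus force the nontrivial extension $F_1|E$ to be simultaneously separable and purely inseparable, an immediate contradiction. Hence $F|K$ must be of type~(b).

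No serious obstacle arises; the statement is essentially a repackaging of Corollary~\ref{2024_08_25_19:45} through the elementary incompatibility of separability and pure inseparability for a nontrivial extension. The real content is conceptual: it exhibits the equality ``$E=F_2$'' as an intrinsic, isomorphism-invariant property that singles out the normal form of Theorem~\ref{2021_05_21_18:40}~\ref{2021_05_21_18:42} among the three types, thereby confirming that the normal forms~\ref{2021_05_21_18:42} and~\ref{2021_05_21_18:43} produce non-isomorphic function fields, as was announced after Proposition~\ref{2024_05_10_01:40}.
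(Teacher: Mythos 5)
Your proposal is correct and matches the paper's (implicit) argument: the corollary is stated without proof precisely because it follows immediately from Corollary~\ref{2024_08_25_19:45}, with the forward direction being part~(b) and the converse coming from the fact that in cases~(a) and~(c) the extension $F_1|E$ is separable while $F_1|F_2$ is purely inseparable and nontrivial. Nothing further is needed.
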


We have thus obtained a conceptual characterization of the three types of function fields.
Indeed, the function fields in Theorem~\ref{2021_05_21_18:40}~\ref{2021_05_21_18:42} are distinguished by the above corollary, while the function fields in Theorem~\ref{2024_02_18_00:35} are characterized by the property that the restricted prime $\pp_2$ is rational.

In the corollary below we employ the notations of Propositions~\ref{2024_03_25_19:55} and~\ref{2024_05_10_01:40}.

\begin{cor}\label{2024_08_04_20:25}
If $F|K$ and $F'|K$ are two function fields as in Theorem~\ref{2024_02_18_00:35}, Theorem~\ref{2021_05_21_18:40}~\ref{2021_05_21_18:42}, or Theorem~\ref{2021_05_21_18:40}~\ref{2021_05_21_18:43},
then each $K$-isomorphism $F' \overset\sim\to F$ defined by the constants 
\begin{enumerate}[\upshape (a)]
    \item $\al,\mu_2,\mu_3,\mu_4,\mu_5 \in K$,
    \item $\mu_1,\mu_2,\mu_4,\mu_5 \in K$,
    \item $\al,\mu_2,\mu_3,\mu_4,\mu_5 \in K$,
\end{enumerate}
in Propositions~\ref{2024_03_25_19:55} and~\ref{2024_05_10_01:40} induces $K$-isomorphisms $E' \overset\sim\to E$ and $H' \overset\sim \to H$ according to the rules
\begin{enumerate}[\upshape (a)]
    \item $\displaystyle\breve z' \mapsto \frac 1 {\eps^3} \bigg( \ga + \frac{\mu_5 A_2 + \mu_4 \breve z}{\mu_4 + \mu_5 \breve z} \bigg), \quad 
    \breve y' \mapsto \frac 1 {\eps^2} \bigg( \mu_2 + \frac{\mu_3 (\mu_5 A_2 + \mu_4 \breve z) + \eps \breve y}{\mu_4 + \mu_5 \breve z} \bigg)$;
 
    \item $\displaystyle \breve w' \mapsto \frac 1 {\eps^2} \bigg( \frac{\mu_5 a_2 + \mu_4 \breve w}{\mu_4 + \mu_5 \breve w}\bigg), \quad
    \breve y' \mapsto \frac 1 {\eps^2} \bigg( \mu_1 + \frac{\mu_2 (\mu_5 a_2 + \mu_4 \breve w) + \eps \breve y}{\mu_4 + \mu_5 \breve w} \bigg)$;
    
    \item $\displaystyle  \breve z' \mapsto \frac 1 \eps \bigg( \ga + \frac{\mu_5 b_1 + \mu_4 \breve z}{\mu_4 + \mu_5 \breve z}\bigg), \quad
    \breve y' \mapsto \frac 1 \eps \bigg( \mu_2 + \frac{\mu_3 (\mu_5 b_1 + \mu_4 \breve z) + \eps \breve y}{\mu_4 + \mu_5 \breve z} \bigg)$.
\end{enumerate}
\end{cor}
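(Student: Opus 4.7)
The plan is to argue in two stages: first an intrinsic restriction argument, and then an explicit computation that reads off the formulas. For the intrinsic step I would exploit that the canonical field $H$ and the pseudo-canonical field $E$ are defined purely in terms of the holomorphic K\"ahler differentials of $F|K$ (respectively, only the exact ones), so they depend only on the $K$-isomorphism class of $F|K$. Any $K$-isomorphism $\sigma \colon F' \overset\sim\to F$ therefore sends holomorphic differentials to holomorphic differentials and exact to exact, and hence restricts to $K$-isomorphisms $E' \overset\sim\to E$ and $H' \overset\sim\to H$. By the corollary preceding the statement, these targets are generated respectively by $\breve z$ or $\breve w$, and by the pairs $\{\breve z, \breve y\}$ or $\{\breve w, \breve y\}$; hence it will suffice to compute $\sigma(\breve z')$ and $\sigma(\breve y')$ (respectively $\sigma(\breve w')$ and $\sigma(\breve y')$) in terms of $\breve z$ (or $\breve w$) and $\breve y$, by substituting the explicit formulas for $\sigma(x')$, $\sigma(z')$, $\sigma(y')$ and $\sigma(w')$ from Propositions~\ref{2024_03_25_19:55} and~\ref{2024_05_10_01:40}.

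For the computational step, the central identity in case~(a) will be the factorisation
\[ \sigma(c'(x')) \;=\; \eps^4 c(x) + \eps^3 \mu_5^2 c_1^{-1} \;=\; \frac{\eps^3 c_1^{-1}(\mu_4+\mu_5\breve z)^2}{A_2+\breve z^2}, \]
which I would derive by expanding $c'(x') = c_0' + c_1' x' + x'^2$, invoking the coefficient relations of Proposition~\ref{2024_03_25_19:55}, and then using $c(x)^{-1} = c_1(A_2 + \breve z^2)$ (read off from $\breve z^2 = A_2 + c_1^{-1} c(x)^{-1}$) together with the identity $\eps + \mu_5^2 A_2 = \mu_4^2$ that is particular to characteristic two. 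Dividing $\sigma(z')$ by this factorisation (and substituting the explicit form of $\be$) produces the announced M\"obius expression for $\sigma(\breve z')$, and $\sigma(\breve y') = \sigma(y')/\sigma(c'(x'))$ comes out by the same route, additionally using $u = yz\, c(x)^{-1}$ and $y^2 = c(x)(B_0 + B_1 x + z)$ to express the result in terms of $\breve z$ and $\breve y$. I would treat cases~(b) and~(c) in precisely the same spirit, once I establish
\[ \sigma(x') \;=\; \frac{\eps^3(\mu_4+\mu_5\breve w)^2}{a_2+\breve w^2}, \qquad \sigma(w') \;=\; \frac{\eps(\mu_4+\mu_5\breve z)^2}{b_1+\breve z^2}, \]
via the parallel identities $\eps + \mu_5^2 a_2 = \mu_4^2$ and $\eps + \mu_5^2 b_1 = \mu_4^2$.

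The hard part will be bookkeeping rather than anything conceptual: verifying the factorisations of $\sigma(c'(x'))$, $\sigma(x')$, and $\sigma(w')$ as squares in the relevant denominators requires invoking all the coefficient relations of the two propositions, and the characteristic-two cross-term cancellations must be tracked carefully. That $\mu_4+\mu_5\breve z$ (respectively $\mu_4+\mu_5\breve w$) appears as the recurring denominator in the announced formulas is itself the signpost that these squaring identities are the engine behind the simplification.
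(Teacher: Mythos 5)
Your proposal is correct and follows exactly the route the paper intends for this corollary (which it states without written proof, as a direct consequence of the preceding corollary and the transformation formulas): the intrinsic observation that a $K$-isomorphism preserves holomorphic and exact differentials gives the restrictions $E'\overset\sim\to E$ and $H'\overset\sim\to H$, and the explicit formulas then follow by substitution. I verified your key factorisations — e.g.\ $\sigma(c'(x'))=\eps^4 c(x)+\eps^3\mu_5^2c_1^{-1}=\eps^3c_1^{-1}(\mu_4+\mu_5\breve z)^2/(A_2+\breve z^2)$ in case (a), and the analogous squares for $\sigma(x')$ and $\sigma(w')$ in cases (b) and (c) — and they are exactly the identities that make the stated M\"obius expressions drop out.
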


\begin{prop}\label{2024_08_04_19:50}
$ $
\begin{enumerate}[\upshape (a)]
    \item A function field $F|K$ as in Theorem~\ref{2024_02_18_00:35} is non-hyperelliptic if and only if $B_1 \neq 0$, that is, if and only if the invariant $\iota = c_1 B_1^2$ does not vanish.
    
    \item Every function field $F|K$ in Theorem~\ref{2021_05_21_18:40}~\ref{2021_05_21_18:42} is non-hyperelliptic.
    
    \item A function field $F|K$ as in Theorem~\ref{2021_05_21_18:40}~\ref{2021_05_21_18:43} is non-hyperelliptic if and only if $c_4\neq 0$, that is, if and only if the invariant $\iota = c_4^4/a_2^3$
does not vanish.
\end{enumerate}
\end{prop}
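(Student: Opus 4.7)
The plan is to test non-hyperellipticity by checking whether the canonical field $H$ equals the full function field $F$, exploiting the explicit description $H=E(\breve y)$ from the preceding corollary together with the tower $E\subset F_1\subset F$. In all three cases we have $[F:F_1]=2$ (from the purely inseparable extension given by $y$), and the corollary on the structure of $E$ gives $[F_1:E]=2$, so $[F:E]=4$. Hence $F|K$ is non-hyperelliptic if and only if $[E(\breve y):E]=4$.

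For case (a), I would compute $\breve y^2-\breve z=(B_0+B_1 x)/c(x)$ using the defining relation $y^2=c(x)(B_0+B_1x+z)$ together with $\breve z=z/c(x)$; here $\breve z\in E$ by definition. Since $1/c(x)=c_1(A_2+\breve z^2)\in E$ (from $\breve z^2=A_2+c_1^{-1}c(x)^{-1}$), we obtain that $B_1\cdot x/c(x)$ lies in $H$. If $B_1\neq 0$ this forces $x\in H$, hence $F_1=E(x)\subset H$ and then $\breve y\in H$ gives $y\in H$, so $H=F$. If $B_1=0$ then $\breve y^2\in E$, so $[H:E]\le 2<4$ and $F$ is hyperelliptic. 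Since $c_1\neq 0$ is part of the normal form, this translates into $\iota=c_1B_1^2\neq 0$.

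For case (b), the key is that $E=F_2$ (by Corollary~\ref{2022_03_11_14:35}), so one has to show directly that $\breve y$ has degree exactly $4$ over $E$. From $\breve y^2=z/x$ and $z\notin F_2=E$ we get $\breve y^2\notin E$, so $E(\breve y^2)=E(z)=F_1$ has degree $2$ over $E$. Similarly $\breve y\notin F_1$ because squares in the Artin--Schreier-type extension $F_1=E(z)$ with $z^2\in E$ all lie in $E$, whereas $\breve y^2=z/x\notin E$. Therefore $[E(\breve y):E(\breve y^2)]=2$ and so $[H:E]=4=[F:E]$, proving $H=F$ unconditionally.

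For case (c), I use $\breve z=z/w$, $1/w=b_1+\breve z^2\in E$ and the defining relation $y^2=(c_3+c_4x+z)w$ to express
\[ \breve y^2=c_3(b_1+\breve z^2)+c_4\,x(b_1+\breve z^2)+\breve z. \]
If $c_4\neq 0$ then $x\in E(\breve y^2,\breve z)\subset H$, hence $F_1\subset H$ and $y=w\breve y\in H$, giving $H=F$. If $c_4=0$ then $\breve y^2\in E$, and one checks $\breve y\notin E$ (since $y\notin F_1\supset E$), so $[H:E]=2<4$ and $F$ is hyperelliptic. The translation into the invariant uses that $a_2\notin K^2$ (so $a_2\neq 0$) is part of the normal form, giving $\iota=c_4^4/a_2^3=0$ iff $c_4=0$.

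The routine but slightly delicate step in each case is verifying that $\breve y$ does not already lie in a smaller subfield; this is forced by the fact that $y$ is a separating variable of $F|F_1$, so $y\notin F_1$ and hence $\breve y\notin F_1$, and in particular $\breve y\notin E$. The main conceptual content, present only in cases (a) and (c), is showing that when the relevant coefficient ($B_1$ or $c_4$) is nonzero one recovers the rational function $x$ inside $H$; this is the decisive step that promotes $H$ from a degree-$2$ extension of $E$ to the full degree-$4$ field $F$.
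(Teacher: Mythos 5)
Your proposal is correct and follows essentially the same route as the paper: in each case you use the explicit generators $\breve z$ (resp.\ $\breve w$) and $\breve y$ of the canonical field $H$ and observe that the relation for $\breve y^2$ forces $x\in H$ (hence $H=F$) exactly when the relevant coefficient $B_1$ or $c_4$ is nonzero, while otherwise $\breve y^2\in E$ and $H$ is a proper subfield. The only cosmetic differences are your degree count $[F:E]=4$ (the paper instead notes directly that $x\in E\subset H$ in case (b), and in the hyperelliptic cases verifies explicitly that $H$ is a quadratic subfield of genus zero rather than invoking the recalled criterion that non-hyperellipticity is equivalent to $H=F$).
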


\begin{proof}
Let $F|K$ be given as in Theorem~\ref{2024_02_18_00:35}. 
Then $F=K(x,z,y)$ has degree $\leq 2$ over the canonical field $H=K(\breve z, \breve y)$, since $F=H(x)$ and $x^2 + c_1 x + c_2 + c_1^{-1}(A_2 + \breve z^2)^{-1} = 0$.
The equation $\breve y^2 = c_1 (B_0 + B_1 x)(A_2 + \breve z^2) + \breve z$ shows that $H$ contains the function $x$, provided that $B_1\neq0$.
It also shows that if $B_1=0$ then $H$ has genus zero and is purely inseparable of degree $\leq 2$ over the pseudo-canonical field $E=K(\breve z)$, 
that is, $H$ is a quadratic subfield of genus zero of $F|K$.

Suppose that $F|K$ is given as in Theorem~\ref{2021_05_21_18:40}~\ref{2021_05_21_18:42}. 
As $x\in F_2=E\subset H = K(\breve w,\breve y)$, the functions $x$, $w = x \breve{w}$, $y=x \cy$ and $z = y^2 x^{-1}$ belong to $H$ and therefore $F=H$.

Assume next that $F|K$ is given as in Theorem~\ref{2021_05_21_18:40}~\ref{2021_05_21_18:43}.
Then $w=(b_1 + \breve z^2)^{-1}$ and $\breve y^2 = (c_3 + c_4 x) (b_1 + \breve z^2) + \breve z$.
It follows that if $c_4\neq0$ then the canonical field $H=K(\breve z,\breve y)$ contains the functions $x$, $w$, $z=w \breve z$ and $y = w \breve y$, that is, $H=F$.
It also follows that if $c_4=0$ then $H$ has genus zero and is inseparable of degree $2$ over $E=K(\breve z)$, that is, $H$ is a quadratic subfield of genus zero of $F|K$.
\end{proof}

Now we use the sections of the canonical divisor $\mathrm{div}(dy)$ (see Lemma~\ref{2024_10_16_15:05}) to realize the non-hyperelliptic function fields $F|K$ in the above proposition as curves of degree $2g-2=4$ in $\PP^{g-1}(K) = \PP^2$, where $g=3$ is the genus of $F|K$.

\begin{thm}\label{2024_05_27_00:10}
The one-dimensional separable non-hyperelliptic geometrically rational function fields $F|K$ of characteristic $p=2$ and genus $g=3$ admitting a singular prime $\pp$ that is not a canonical divisor are classified as follows:
\begin{enumerate}[\upshape (a)]
    \item \label{2024_05_27_00:11}
    If $\pp_2$ is rational then $F|K$ is the function field of a regular plane projective integral quartic curve over $\Spec K$ with generic point $(1: y : z)$ that satisfies the quartic equation
    \[ b y^4 + d z^4 + y^2 z^2 + z^3 + (b + b^2 c^3) z^2 + a y^2 + a z + a b^2 c^3 + a^2 d = 0, \]
    where $a,b,c,d\in K$ are constants satisfying $a\notin K^2$ and $b,c\neq 0$.
    The singular prime $\pp$ is centered at the point $(1:a^{1/4}:a^{1/2})$ in $\PP^2(\ov K)$ and has residue fields $\ka(\pp)=K(a^{1/4})$, $\ka(\pp_1)=K(a^{1/2})$, $\ka(\pp_2) = K$.
    
    \item \label{2024_05_27_00:12}
    If $\pp_2$ is non-rational and the pseudocanonical field $E$ is equal to the second Frobenius pullback $F_2$, then $F|K$ is the function field of a regular plane projective integral quartic curve over $\Spec K$ with generic point $(1:y:z)$ that satisfies the quartic equation
    \[ y^4 + a z^4 + z^3 + b z + c = 0, \]
    where $a,b,c \in K$ are constants satisfying $b \notin K^2$.
    The singular prime $\pp$ is centered at the point $(1:(a b^2 + c)^{1/4} : b^{1/2})$ in $\PP^2(\ov K)$ and has residue fields $\ka(\pp)=K(b^{1/2}, (a b^2 + c)^{1/4})$, $\ka(\pp_1)=K(b^{1/2}, (a b^2 + c)^{1/2})$, $\ka(\pp_2) = K(b^{1/2})$, $\ka(\pp_3)=K$.
    
    \item \label{2024_05_27_00:13}
    If $\pp_2$ is non-rational and the pseudocanonical field $E$ is not equal to the second Frobenius pullback $F_2$,
    then $F|K$ is the function field of the regular regular plane projective geometrically integral curve over $\Spec K$ with the generic point $(1:y:z)$ which satisfies the homogeneous quartic equation
    \[ y^4 + d z^2 y^2 + (c+a) z^4 + d z^3 + bd\, y^2 + z^2 + bd\, z + b^2 c = 0, \]
    where $a,b,c,d\in K$ satisfy $a,b\notin K^2$ and $d\neq 0$.
    The prime $\pp$ is centered at $(1:(a b^2 + b)^{1/4}:b^{1/2})$, and has residue fields $\ka(\pp) = K(a^{1/2}, b^{1/2}, (a b^2 + b)^{1/4})$, $\ka(\pp_1)=K(a^{1/2}, b^{1/2})$, $\ka(\pp_2) = K(a^{1/2})$, $\ka(\pp_3)=K$.
\end{enumerate}
\end{thm}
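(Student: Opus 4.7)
The strategy is to treat the three cases in parallel by constructing the canonical embedding $C\hookrightarrow \PP^2_K$ explicitly and reading off the defining quartic and the centre of $\pp$ in those coordinates. The first step is to match each case with a family from Section~\ref{2024_04_05_18:25}: case~\ref{2024_05_27_00:11} corresponds to Theorem~\ref{2024_02_18_00:35} under the non-hyperelliptic hypothesis $B_1\neq 0$ from Proposition~\ref{2024_08_04_19:50}; case~\ref{2024_05_27_00:12} corresponds to Theorem~\ref{2021_05_21_18:40}~\ref{2021_05_21_18:42} via Corollary~\ref{2022_03_11_14:35}, where every function field of that form is automatically non-hyperelliptic; and case~\ref{2024_05_27_00:13} corresponds to Theorem~\ref{2021_05_21_18:40}~\ref{2021_05_21_18:43} under $c_4\neq 0$. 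Proposition~\ref{2024_08_04_19:35} then yields a basis $dy,\,\sigma\, dy,\,\tau\, dy$ of the three-dimensional space of holomorphic differentials of $F|K$, with $(\sigma,\tau)=(\cz,\cy)$, $(\cw,\cy)$, $(\cz,\cy)$ respectively; since $F|K$ is non-hyperelliptic, the canonical map is a closed immersion with generic point $(1:\tau:\sigma)$, and the sought quartic is the minimal $K$-polynomial relation of total degree four between $\sigma$ and $\tau$.

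To produce this relation I exploit the simple quadratic equations satisfied by $\sigma$ and $\tau$ over the presentation of $F|K$ recorded in Section~\ref{2024_04_05_18:25}, schematically $\sigma^2+A=1/R(x)$ with $R(x)$ affine or quadratic in $x$, and $\tau^2+\sigma=L(x)(\sigma^2+A)$ with $L(x)$ affine in $x$. Solving the second equation for $x$ (using $B_1\neq 0$ or $c_4\neq 0$), substituting into the first, and clearing the denominator $(\sigma^2+A)^2$ yields a polynomial identity of total degree four in $\sigma$ and $\tau$. A renaming of the constants then reproduces the target equations: for case~\ref{2024_05_27_00:11} the dictionary is $a=A_2$, $b=1/(c_1^2 B_1)$, $c=c_1 B_1$, $d=(c_0 B_1^2+c_1 B_0 B_1+B_0^2)/B_1$ (one checks the incidental identity $b^2 c^3=B_1/c_1$); for case~\ref{2024_05_27_00:12} one sets $a=b_0$, $b=a_2$, $c=ab^2 + b_2$; and for case~\ref{2024_05_27_00:13} the immediate identifications $b=b_1$, $d=c_4/a_2$ must be supplemented by $a$ and $c$ determined from the remaining two coefficients. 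The centre and residue fields of $\pp$ then follow by evaluating $\sigma$ and $\tau$ at the unique pole of $x$ using the Frobenius equations, which yield $\sigma(\pp)^2$ and $\tau(\pp)^2$ as explicit expressions in the new constants $a,b,c,d$, and translating the residue field formulas in Theorems~\ref{2024_02_18_00:35} and~\ref{2021_05_21_18:40} accordingly.

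The main algebraic obstacle is case~\ref{2024_05_27_00:13}. Matching the constant term of the expanded quartic against the target expression $b^2 c$ forces the non-obvious identification $a=c_4^2/(a_2 b_1^2)$, which is invisible from any single monomial other than the constant term and must be reconciled with the constraint coming from the coefficient of $z^4$. One then verifies that $a\notin K^2$ is equivalent to $a_2\notin K^2$, and that $a^{1/2}$ already lies in $K(b^{1/2},(ab^2+b)^{1/4})$ so that the announced residue field $\ka(\pp)=K(a^{1/2},b^{1/2},(ab^2+b)^{1/4})$ coincides with the field $K(\sigma(\pp),\tau(\pp))$ read off directly from the centre coordinates.
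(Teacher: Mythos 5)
Your proposal follows essentially the same route as the paper: reduce to the three normal forms of Theorems~\ref{2024_02_18_00:35} and~\ref{2021_05_21_18:40}\ref{2021_05_21_18:42},\ref{2021_05_21_18:43} via the non-hyperellipticity criteria, take the canonical generators $\breve z$ (resp.\ $\breve w$) and $\breve y$ from Proposition~\ref{2024_08_04_19:35}, eliminate $x$ from their quadratic relations to obtain the quartic, and rename constants — and your dictionaries (including $d=(c_0B_1^2+c_1B_0B_1+B_0^2)/B_1$, which is the paper's $d=B_1c_0$ after the $B_0=0$ normalization, and $a=c_4^2/(a_2b_1^2)$ in case (c)) agree with the paper's. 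The only cosmetic deviations are that your schematic $\tau^2+\sigma=L(x)(\sigma^2+A)$ does not literally fit case~\ref{2024_05_27_00:12} (where the relation is $\breve y^4=b_2+b_0x^{-2}+\breve wx^{-1}$) and that you locate the centre of $\pp$ by evaluating $\sigma,\tau$ at the pole of $x$ rather than by the Jacobian criterion; neither affects correctness.
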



\begin{proof}
Since $F|K$ is non-hyperelliptic the singular prime $\pp$ has geometric singularity degrees $\de(\pp)=3$ and $\de(\pp_1)=1$ (see the beginning of Section~\ref{2024_04_05_18:25}).
Thus the function fields $F|K$ in the theorem are necessarily given as in Theorem~\ref{2024_02_18_00:35} or as in Theorem~\ref{2021_05_21_18:40}, \ref{2021_05_21_18:42} or~\ref{2021_05_21_18:43}.

(a) Conversely, a function field $F|K$ as in Theorem~\ref{2024_02_18_00:35}
is non-hyperelliptic if and only if $B_1\neq 0$ (see Proposition~\ref{2024_08_04_19:50}), in which case we can normalize $B_0=0$ by replacing $x$ with $x + B_0 B_1^{-1}$.
A quartic equation between the generators $\breve z = z \cdot (c_0 + c_1 x + x^2)^{-1}$ and $\breve y = y \cdot (c_0 + c_1 x + x^2)^{-1}$ of $F=K(\breve z,\breve y)$ is obtained by eliminating $x$ from the relations $\breve y^2 = c_1 B_1 x (A_2 + \breve z^2) + \breve z$ and $x^2 + c_1 x + c_0 = c_1^{-1}(A_2 + \breve z^2)^{-1}$, i.e.,
\[ B_1^{-2} c_1^{-2} \breve y^4 + c_0 \breve z^4 + B_1^{-1} \breve y^2 \breve z^2 + B_1^{-1}\breve z^3 + (c_1^{-1} + B_1^{-2} c_1^{-2}) \breve z^2 + B_1^{-1} A_2 \breve y^2 + B_1^{-1} A_2 \breve z + c_1^{-1} A_2 + c_0 A_2^2 = 0. \]
To get the desired equation we write $y$ and $z$ for $\breve y$ and $\breve z$ respectively, and we put $a:= A_2$, $b:= c_1^{-2} B_1^{-1}$, $c:= c_1 B_1$, $d:= B_1 c_0$, so that the mapping $(A_2,B_1,c_1,c_0) \mapsto (a,b,c,d)$ defines a bijection of the set $(K\setminus K^2)\times K^* \times  K^* \times K$ onto itself, thereby obtaining
\[ b y^4 + d z^4 + y^2 z^2 + z^3 + (b + b^2 c^3) z^2 + a y^2 + a z + a b^2 c^3 + a^2 d = 0, \]
where $a,b,c,d \in K$ are constants satisfying $a \notin K^2$ and $b,c\neq 0$.

(b)
Each function field $F|K$ as in Theorem~\ref{2021_05_21_18:40}~\ref{2021_05_21_18:42} is non-hyperelliptic by Proposition~\ref{2024_08_04_19:50}.
As $w= \breve w x$ and $y = \breve y x$, the third and the first equation mean that $z = x \breve y^2$ and $x = (a_2 + \breve w^2)^{-1}$. Thus the function field $F|K$ is generated by $\breve w$ and $\breve y$, and the second equation provides the quartic equation between the generators
\[ \breve y^4 = b_2 + b_0 a_2^2 + a_2 \breve w + \breve w^3 + b_0 \breve w^4. \]
Writing $y$ and $z$ for $\breve y$ and $\breve w$ respectively, and setting $a:=b_0$, $b:=a_2$ and $c:=b_2+b_0 a_2^2$, we obtain the representation of $F|K$ in the second item.

(c)
By Proposition~\ref{2024_08_04_19:50}, a function field $F|K$ as in Theorem~\ref{2021_05_21_18:40}~\ref{2021_05_21_18:43} is non-hyperelliptic if and only if $c_4\neq 0$, in which case we normalize $c_3=0$ by replacing $x$ with $x + c_4^{-1}c_3$.
As $z = w \breve z$ and $y = w \breve y$ the second and the third equations rewrite as follows $\breve z^2 = b_1+w^{-1}$ and $\breve y^2 = c_4 x w^{-1} + \breve z$. Eliminating $w$ and $x$ from the second and the third equation we conclude that $F=K(\breve z,\breve y)$, and entering into the first equation we obtain the quartic equation between the two generators
\[ a_2 \breve y^4 + c_4 \breve{z}^2 \breve y^2 + c_4^2 a_0 \breve{z}^4 + c_4 \breve{z}^3 + c_4 b_1 \breve y^2 + a_2 \breve{z}^2 + c_4 b_1 \breve{z} + c_4^2 (b_1^2 a_0 + 1) = 0. \]
Writing $y$ and $z$ for $\breve y$ and $\breve z$, and putting $a:= a_2^{-1} b_1^{-2} c_4^2$, $b= b_1$, $c:= a_2^{-1} c_4^2 (a_0 + b_1^{-2})$ and $d:= c_4 a_2^{-1}$ gives
\[ y^4 + d z^2 y^2 + (c+a) z^4 + d z^3 + bd\, y^2 + z^2 + bd\, z + b^2 c = 0, \]
where $a,b,c,d\in K$ satisfy $a,b\notin K^2$ and $d\neq 0$.

The point where the singular prime $\pp$ is centered is obtained in each case from the Jacobian criterion. The assertions on the residue fields of $\pp$ are due to Theorems~\ref{2024_02_18_00:35} and~\ref{2021_05_21_18:40}.
\end{proof}

Using Propositions~\ref{2024_03_25_19:55} and~\ref{2024_05_10_01:40}, together with Corollary~\ref{2024_08_04_20:25}, we obtain the isomorphism classes of the function fields in the above theorem.

\begin{prop} \label{2024_07_22_21:20} 
Let $F|K=K(z,y)|K$ and $F'|K=K(z',y')|K$ be two function fields as in Theorem~\ref{2024_05_27_00:10}.
\begin{enumerate}[\upshape (a)]
    \item 
    If $F|K$ and $F'|K$ are of type~\ref{2024_05_27_00:11}, then they are isomorphic if and only if there exist constants $\mu_2,\mu_3,\mu_4,\mu_5 \in K$ satisfying $(\mu_4,\mu_5)\neq (0,0)$ and
    \begin{align*}
        \eps^6 a' &= a + \ga^2, \qquad \eps^3 b' = b, \qquad c' = \eps c, \\
        d' &= \eps ( \mu_4 \mu_5 + \mu_3^2)^2 b + \eps^2 (\mu_4 \mu_5 + \mu_3^2) + \eps^2 ( \mu_5^2 b^2 c^3 + \eps d),
    \end{align*}
    where $\eps:= \mu_4^2 + \mu_5^2 a \neq 0$ and $\ga := \eps^{-1}(\mu_2^2 + \mu_3^2 a)$.
    The corresponding $K$-isomorphisms $F' \overset\sim\to F$ are given by
    \begin{align*}
        z' &\mapsto \frac 1 {\eps^3} \bigg( \ga + \frac{\mu_5 a + \mu_4 z}{\mu_4 + \mu_5 z} \bigg), \quad
        y' \mapsto \frac 1 {\eps^2} \bigg( \mu_2 + \frac{\mu_3 (\mu_5 a + \mu_4 z) + \eps y}{\mu_4 + \mu_5 z} \bigg).
    \end{align*}
    
    \item 
    If $F|K$ and $F'|K$ are of type~\ref{2024_05_27_00:12}, then they are isomorphic if and only if there exist constants $\mu_1,\mu_2,\mu_4,\mu_5 \in K$ satisfying $(\mu_4,\mu_5) \neq (0,0)$ and
    \begin{align*}
        a' &= \eps^2 a + \eps \mu_4 \mu_5 + \mu_2^4 + \mu_5^4 (c + a b^2), \qquad \eps^4 b' = b, \\
        \eps^6 c' &= c + \ga^2 + (\eps \mu_4 \mu_5 + \mu_2^4) \eps^{-2} b^2 + \eps^{-2} \mu_5^4 (c + ab^2) b^2,
    \end{align*}
    where $\eps := \mu_4^2 + \mu_5^2 b \neq 0$ and $\ga := \eps^{-1} (\mu_1^2 + \mu_2^2 b)$.
    The corresponding $K$-isomorphisms $F' \overset\sim\to F$ are given by
    \begin{align*}
        z' &\mapsto \frac 1 {\eps^2} \bigg( \frac{\mu_5 b + \mu_4 z}{\mu_4 + \mu_5 z}\bigg), \quad
        y' \mapsto \frac 1 {\eps^2} \bigg( \mu_1 + \frac{\mu_2 (\mu_5 b + \mu_4 z) + \eps y}{\mu_4 + \mu_5 z} \bigg).
    \end{align*}
    
    \item 
    If $F|K$ and $F'|K$ are of type~\ref{2024_05_27_00:13}, then they are isomorphic if and only if there exist constants $\mu_2,\mu_3,\mu_4,\mu_5 \in K$ satisfying $(\mu_4,\mu_5) \neq (0,0)$ and
    \begin{align*}
        a' &= \eps^2 \, \frac {ab^2} {(b + \ga^2)^2}, \qquad \eps^2 b' = b + \ga^2, \qquad d' = \eps d, \\
        c' &= \eps^2 (c + a) + (1 + \eps d) (\mu_3^2 + \mu_4 \mu_5) + a b^2 \Big( \mu_5^4 + \frac{\eps^2}{(b + \ga^2)^2} \Big),
    \end{align*}
    where $\eps := \mu_4^2 + \mu_5^2 b \neq 0$ and
    $\ga := \eps^{-1} (\mu_2^2 + \mu_3^2 b)$.
    The corresponding $K$-isomorphisms $F' \overset\sim\to F$ are given by
    \begin{align*}
        z' &\mapsto \frac 1 \eps \bigg( \ga + \frac{\mu_5 b + \mu_4 z}{\mu_4 + \mu_5 z}\bigg), \quad
        y' \mapsto \frac 1 \eps \bigg( \mu_2 + \frac{\mu_3 (\mu_5 b + \mu_4 z) + \eps y}{\mu_4 + \mu_5 z} \bigg).
    \end{align*}
\end{enumerate}
\end{prop}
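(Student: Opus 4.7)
The plan is to derive each part of the proposition from the corresponding isomorphism criterion of Proposition~\ref{2024_03_25_19:55} or Proposition~\ref{2024_05_10_01:40}, by pushing that criterion through the substitutions that produced the quartic normal forms in the proof of Theorem~\ref{2024_05_27_00:10} and reading off the explicit action on the generators from Corollary~\ref{2024_08_04_20:25}.

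In each of the three items, the quartic equation is an algebraic relation between $(y,z) = (\breve y, \breve z)$ (cases~(a) and~(c)) or $(y,z) = (\breve y, \breve w)$ (case~(b)). A $K$-isomorphism $\sig \colon F' \overset\sim\to F$ therefore preserves the canonical field $H$, and Corollary~\ref{2024_08_04_20:25} directly supplies the asserted fractional-linear expressions for $\sig(z')$ and $\sig(y')$ once one performs the identifications $A_2 = a$, $a_2 = b$, $b_1 = b$ in the respective cases. Conversely, any choice of parameters satisfying the stated equations lifts to a $K$-isomorphism by invoking Proposition~\ref{2024_03_25_19:55} or Proposition~\ref{2024_05_10_01:40}.

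The bulk of the work lies in translating the constant relations of the old propositions into the new ones. Cases~(a) and~(c) involve an additional normalization present in the proof of Theorem~\ref{2024_05_27_00:10}, namely $B_0 = 0$ in~(a) and $c_3 = 0$ in~(c); imposing this condition on both $F$ and $F'$ turns one equation of Proposition~\ref{2024_03_25_19:55} (respectively, Proposition~\ref{2024_05_10_01:40}~(c)) into a linear equation for $\al$, uniquely solved by $\al = c^{-1}\eps^{-1}(\mu_4\mu_5 + \mu_3^2)$ (respectively, $\al = (\eps c_4)^{-1}(\mu_3^2 + \mu_4\mu_5)$). Substituting this into the remaining four equations and then applying the bijective change of constants $(A_2, B_1, c_1, c_0) \mapsto (a, b, c, d) = (A_2,\, c_1^{-2} B_1^{-1},\, c_1 B_1,\, B_1 c_0)$ in case~(a), the analogous change $(a_2, b_1, a_0, c_4) \mapsto (a,b,c,d) = (a_2^{-1} b_1^{-2} c_4^2,\, b_1,\, a_2^{-1} c_4^2(a_0 + b_1^{-2}),\, c_4 a_2^{-1})$ in case~(c), and the substitution $(b_0, a_2, b_2 + b_0 a_2^2) \mapsto (a, b, c)$ in case~(b), yields the four equations asserted in each item. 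Case~(b) requires no elimination since Proposition~\ref{2024_05_10_01:40}~(b) already has four parameters.

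The main bookkeeping obstacle I expect is the identity $\eps^6 c' = c + \ga^2 + \cdots$ in case~(b), and its analogues for $d'$ in~(a) and for $c'$ in~(c): the relation on the highest-degree coefficient of the old proposition must be combined with the expansion of $\eps^6 a' b'^2$, where $b'^2 = b^2 \eps^{-8}$, and the characteristic-two cancellations $ab^2 + ab^2 = 0$ are what produces the clean symmetric expression on the right-hand side. Once this is checked, the remaining identities reduce to straightforward rescalings and each item follows.
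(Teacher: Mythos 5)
Your proposal is correct and is exactly the route the paper takes: the paper offers no separate proof of this proposition, stating only that it follows from Propositions~\ref{2024_03_25_19:55} and~\ref{2024_05_10_01:40} together with Corollary~\ref{2024_08_04_20:25}, and your outline — eliminating $\al$ via the normalizations $B_0=B_0'=0$ and $c_3=c_3'=0$, pushing the old constant relations through the coordinate changes $(A_2,B_1,c_1,c_0)\mapsto(a,b,c,d)$ etc., and reading the fractional-linear action off the corollary — is precisely that derivation, including the correct solved values of $\al$ and the characteristic-two cancellation in case~(b).
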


\begin{cor}\label{2024_08_04_11:55}
Let $F|K$ be a function field as in Theorem~\ref{2024_05_27_00:10}.
Then the group $\mathrm{Aut}(F|K)$ of automorphisms of $F|K$ is trivial.
If $F|K$ is of type~\ref{2024_05_27_00:11} or~\ref{2024_05_27_00:13} then
\begin{enumerate}[\upshape (a)]
    \item $\iota = b c^3$,
    \setcounter{enumi}{2}
    \item $\iota = a b^2 d^2$,
\end{enumerate}
is an invariant of the function field $F|K$. 
\end{cor}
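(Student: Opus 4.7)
The plan is to derive both assertions as direct consequences of the explicit isomorphism formulas recorded in Proposition~\ref{2024_07_22_21:20}, invoking also the earlier results of Section~\ref{2024_04_05_18:25} that classify the ambient (possibly hyperelliptic) families.

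First I would establish the triviality of $\mathrm{Aut}(F|K)$. The three families of Theorem~\ref{2024_05_27_00:10} are precisely the non-hyperelliptic members of the larger families classified in Theorems~\ref{2024_02_18_00:35} and~\ref{2021_05_21_18:40}. By Proposition~\ref{2024_08_04_19:50}, non-hyperellipticity in cases~\ref{2024_05_27_00:11} and~\ref{2024_05_27_00:13} is equivalent to $B_1 \neq 0$ and $c_4 \neq 0$ respectively, which forces the invariants $c_1 B_1^2$ and $c_4^4/a_2^3$ of Corollary~\ref{2024_03_25_11:50} and Proposition~\ref{2024_05_10_01:40}~(c) to be nonzero. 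The cited results then yield $\mathrm{Aut}(F|K) = 1$ in these two cases, while in case~\ref{2024_05_27_00:12} triviality is supplied unconditionally by Proposition~\ref{2024_05_10_01:40}~(b). Alternatively, specializing the isomorphism formulas of Proposition~\ref{2024_07_22_21:20} to $F' = F$ and solving the resulting system for the parameters $\mu_i$ recovers the same conclusion.

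Next I would verify the invariance of the two expressions by direct substitution of the transformation laws. In case~\ref{2024_05_27_00:11}, Proposition~\ref{2024_07_22_21:20}~(a) gives $\eps^3 b' = b$ and $c' = \eps c$, whence
\[
b' (c')^3 = \eps^{-3} b \cdot \eps^3 c^3 = b c^3.
\]
In case~\ref{2024_05_27_00:13}, Proposition~\ref{2024_07_22_21:20}~(c) gives $a' = \eps^2 a b^2 (b+\ga^2)^{-2}$, $b' = \eps^{-2}(b+\ga^2)$ and $d' = \eps d$, so
\[
a' (b')^2 (d')^2 = \eps^2 \cdot \frac{a b^2}{(b+\ga^2)^2} \cdot \frac{(b+\ga^2)^2}{\eps^4} \cdot \eps^2 d^2 = a b^2 d^2.
\]

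The only obstacle is clerical bookkeeping: tracking the powers of $\eps$ and verifying the cancellation of $(b+\ga^2)^2$ in case~\ref{2024_05_27_00:13}. No conceptual difficulty arises, since Proposition~\ref{2024_07_22_21:20} has already carried out the substantive work of classifying isomorphisms within each family, and the non-vanishing conditions supplied by Proposition~\ref{2024_08_04_19:50} ensure that the quantities $bc^3$ and $ab^2d^2$ are well-defined (non-degenerate) invariants rather than trivial ones.
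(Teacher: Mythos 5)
Your proposal is correct and follows essentially the same route as the paper: the paper derives the corollary from the transformation laws of Proposition~\ref{2024_07_22_21:20} and notes that it is also a consequence of Corollary~\ref{2024_03_25_11:50} and Proposition~\ref{2024_05_10_01:40} via the non-vanishing of the invariants $\iota = c_1 B_1^2$ and $\iota = c_4^4/a_2^3$ guaranteed by Proposition~\ref{2024_08_04_19:50}. Your explicit cancellation computations for $b'(c')^3$ and $a'(b')^2(d')^2$ are exactly the (unwritten) verifications the paper intends, and they check out.
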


The corollary is also a consequence of Corollary~\ref{2024_03_25_11:50} and Proposition~\ref{2024_05_10_01:40}.
We conclude that there exist infinitely many isomorphism classes of function fields of type~\ref{2024_05_27_00:11} and~\ref{2024_05_27_00:13} (see also Remark~\ref{2024_08_25_19:50}).

Let $F|K$ be a non-hyperelliptic geometrically rational function field of genus $g=3$, whose only singular prime $\pp$ has the property that it is not a canonical divisor.
Then $F|K$ is the function field of a regular projective geometrically integral quartic curve $C$ defined over $\Spec(K)$, as described in the three items \ref{2024_05_27_00:11}, \ref{2024_05_27_00:12} and \ref{2024_05_27_00:13} in Theorem~\ref{2024_05_27_00:10}.
The curve $C|K$ is actually the regular complete model of the function field $F|K$; in other words, its closed points correspond bijectively to the primes of $F|K$, and its local rings are the local rings of the corresponding primes, which are discrete valuation rings, i.e., regular one-dimensional local rings.
The generic point is the only non-closed point, and its local ring is equal to the function field $F$.

The extended curve $C_{\ov K} = C \otimes_K \ov K$ is a plane quartic rational curve in $\PP^2 (\ov K)$, with a unique singular point at $(1:a^{1/4}:a^{1/2})$, $(1:(a b^2 + c)^{1/4} : b^{1/2})$ and $(1:(a b^2 + b)^{1/4}:b^{1/2})$ respectively.
This is the point at which the singular prime $\pp$ is centered.
Its local ring $\OO_\pp \otimes_K \ov K$ has $\delta$-invariant 3, and 
moreover its tangent line intersects the quartic curve uniquely at this point.
If item~\ref{2024_05_27_00:12} or item~\ref{2024_05_27_00:13} occurs then this tangent line has multiplicity 2, whereas for item~\ref{2024_05_27_00:11} the multiplicity may be 2 (if $b\neq c^{-3}$) or 3 (if $b = c^{-3}$).

The quartic curve $C_{\ov K}$ is \emph{strange}, in the sense that all its tangent lines meet at the common point intersection point $(0:1:0)$.
In items~\ref{2024_05_27_00:11} and~\ref{2024_05_27_00:13} the tangent lines at the non-singular points of $C_{\ov K}$ are all bitangents, so in particular the quartic curve $C_{\ov K}$ has no inflection points.
In item~\ref{2024_05_27_00:12} the two points of tangency coincide, and so each non-singular point is a non-ordinary inflection point.

\medskip 

Combined with \cite[Theorem~1.1]{HiSt23}, Theorem~\ref{2024_05_27_00:10} yields a complete classification of all regular geometrically rational plane projective quartic curves in characteristic $p=2$, as stated in Theorem~\ref{2024_06_01_23:20}.

\begin{rem}
Let $C|K$ be a curve defined as in Theorem~\ref{2024_06_01_23:20}, item \ref{2024_06_01_23:21} or~\ref{2024_06_01_23:22}, and let $F|K=K(C)|K$ be its function field. 
It can be shown by similar considerations that a basis of holomorphic differentials and the pseudocanonical field are given by
\begin{enumerate}[(i)]
   \item $d \breve y$, $\breve x \, d \breve y$, $\breve y \, d \breve y$ and $E=K(\breve x)=F_2$, where $\breve x=x/z\in F$ and $\breve y = y/z \in F$;
   \item $d\breve y$, $\breve z \, d\breve y$, $\breve y \, d \breve y$ and $E=K(\breve z)\neq F_2$, where $\breve y=y/x\in F$ and $\breve z = z/x \in F$.
\end{enumerate}
Thus the five families in Theorem~\ref{2024_06_01_23:20} can be distinguished by intrinsic properties, as documented in Table~\ref{2025_04_03_02:45}.
\end{rem}

\section{Universal fibrations by plane projective rational quartic curves}

We now look at the fibrations that can be constructed with the three families of curves in Theorem~\ref{2024_05_27_00:10}, or equivalently, with the last three families in Theorem~\ref{2024_06_01_23:20}.
To this end we fix an algebraically closed ground field $k$ of characteristic 2.
We consider the three integral hypersurfaces
\[ Z_3 \su \PP^2 \times \AAA^4, \quad Z_4 \su \PP^2 \times \AAA^3, \quad Z_5 \su \PP^2 \times \AAA^4 \]
whose points $((x:y:z),(a,b,c,d))$, $((x:y:z),(a,b,c))$, $((x:y:z),(a,b,c,d))$ satisfy the relations
\begin{gather*}
    b y^4 + d z^4 + y^2 z^2 + x z^3 + (b + b^2 c^3) x^2 z^2 + a x^2 y^2 + a x^3 z + (a b^2 c^3 + a^2 d) x^4 = 0, \\
    y^4 + a z^4 + x z^3 + b x^3 z + c x^4 = 0, \\
    y^4 + d z^2 y^2 + (c+a) z^4 + d x z^3 + bd\, x^2 y^2 + x^2 z^2 + bd\, x^3 z + b^2 c x^4 = 0,
\end{gather*}
respectively.
The projection morphisms
\[ \pi_3:Z_3\to \AAA^4, \quad \pi_4: Z_4\to \AAA^3, \quad 
\pi_5:Z_5 \to \AAA^4  \]
yield families of plane projective rational quartic curves, whose generic fibres are curves defined over the function fields of the bases, given as in items~\ref{2024_05_27_00:11}, \ref{2024_05_27_00:12} and~\ref{2024_05_27_00:13} in Theorem~\ref{2024_05_27_00:10}, respectively.

\begin{thm}\label{2024_05_29_21:05}
Let $\phi : T \to B$ be a proper dominant morphism of irreducible smooth
algebraic varieties whose generic fibre is a non-hyperelliptic geometrically rational curve $C$ over $k(B)$ of genus $3$, that admits a non-smooth point $\pp$ that is not a canonical divisor on $C$. 
Then the fibration $\phi : T \to B$ is, up to birational equivalence, a dominant base extension of one of the fibrations $\pi_3$, $\pi_4$ or $\pi_5$.
\end{thm}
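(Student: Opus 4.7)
The plan is to use Theorem~\ref{2024_05_27_00:10} to put the generic fibre $C = T_\eta$ into explicit normal form over $K = k(B)$, to read off its coefficients as rational functions on $B$, and then to recognize $T \to B$ as a base change of the corresponding universal family $\pi_3$, $\pi_4$, or $\pi_5$. First, I would apply Theorem~\ref{2024_05_27_00:10} to $C$. Since $C$ is by assumption a non-hyperelliptic, geometrically rational, genus-three curve over $K$ whose only non-smooth point $\pp$ is not a canonical divisor, the theorem places $C$ in exactly one of the three types~\ref{2024_05_27_00:11}, \ref{2024_05_27_00:12}, or~\ref{2024_05_27_00:13}, each given by an explicit plane quartic equation with coefficients $a,b,c,d$ (or $a,b,c$ in case~\ref{2024_05_27_00:12}) in $K$ that satisfy specific non-vanishing and non-squareness conditions.

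Next, I would view these coefficients as rational functions on the smooth variety $B$; collecting them yields a rational map $\psi : B \dashrightarrow \AAA^n$, with $n = 4$ in cases~\ref{2024_05_27_00:11} and~\ref{2024_05_27_00:13} and $n = 3$ in case~\ref{2024_05_27_00:12}. Replacing $B$ by a dense open subset on which $\psi$ is a regular morphism (harmless up to birational equivalence over $B$), I would form the pullback family $\psi^{\ast} Z_i := Z_i \times_{\AAA^n} B \to B$. By construction its generic fibre is the plane quartic in $\PP^2_K$ cut out by precisely the normal-form equation of $C$, and hence is $K$-isomorphic to $C$. Invoking the Lichtenbaum--Shafarevich theorem recalled in the Introduction, according to which the regular proper model of a curve of positive genus over a base is unique up to isomorphism, the two fibrations $T \to B$ and $\psi^{\ast} Z_i \to B$, which share the generic fibre $C$, are therefore birationally equivalent over $B$. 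This exhibits $\phi$ as a dominant base extension of $\pi_i$ through $\psi$.

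The main obstacle I anticipate is purely organizational. One must check that after shrinking $B$ the non-vanishing conditions such as $b,c \neq 0$ persist on an open subset; they define open conditions on $\AAA^n$, so their preimages under $\psi$ are open in $B$. The non-squareness conditions like $a \notin K^2$ are automatic because $a$ is being regarded as an element of $K = k(B)$ itself and not of a smaller subfield. The conceptual reduction of the classification over $B$ to the classification over $K$ is the content of Theorem~\ref{2024_05_27_00:10}, so once that theorem is in hand this last step is essentially formal.
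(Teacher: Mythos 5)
Your argument is correct and is essentially the proof the paper has in mind (the paper omits it, deferring to the analogous \cite[Theorem~5.1]{HiSt23}): normalize the generic fibre via Theorem~\ref{2024_05_27_00:10}, read its coefficients as rational functions on $B$, shrink $B$ so that the coefficient map $\psi$ is regular and the open non-vanishing conditions hold, and pull back the corresponding universal family. One minor remark: the appeal to Lichtenbaum--Shafarevich is unnecessary and not really applicable here (that theorem concerns relatively minimal regular models over one-dimensional bases, whereas $B$ may have any dimension); for birational equivalence over $B$ it suffices that the two fibrations have $K$-isomorphic generic fibres, since this already gives $k(T)\cong K(C)\cong k(\psi^{*}Z_i)$ as extensions of $K=k(B)$.
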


The theorem states that the three fibrations $\pi_i$ are universal in the sense that any fibration whose generic fibre satisfies the hypotheses of  Theorem~\ref{2024_05_27_00:10} is obtained from one of them by a base extension.
Its proof is analogous to that of \cite[Theorem~5.1]{HiSt23}, and so we omit it.

We next describe the fibres of the fibrations $\pi_i$. 
Generically, they behave in the same manner as the geometric generic fibres $C_{\ov K}$ discussed at the end of the preceding section. But there are special fibres that behave differently.

We start with the fibration $\pi_3:Z_3\to \AAA^4$.
If $b=0$ then the fibre $\pi_3^{-1}(P)$ over the point $P=(a,b,c)$ is the union of a smooth quadric and a double line, hence it is reducible and non-reduced.
If $b\neq0$ then we are in the generic case, where the fibre behaves in the same way as the geometric generic fibre $C_{\ov K}$ described in the previous section. 
Precisely, $\pi_3^{-1}(P)$ is a plane rational quartic curve with a unique singular point of multiplicity 2 (if $\iota := b c^3 \neq 1$) or 3 (if $\iota = b c^3 = 1$), 
whose unique tangent line meets the curve only at that point; moreover, each of the remaining tangent lines is a bitangent, and all of them pass through a common intersection point, i.e., the curve is strange.

Next we consider the fibration $\pi_4:Z_4\to \AAA^3$.
If $b\neq 0$ then we are in the generic case, i.e., the fibre $\pi_4^{-1}(P)$ over the point $P=(a,b,c)$ is a plane rational quartic curve with a unique singular point of multiplicity 2, and with all its tangent lines meeting in a unique common point, i.e., the curve is strange; in addition, every tangent line intersects the curve at a unique point, hence every non-singular point is a non-ordinary inflection point.
In the opposite case $b=0$ the fibre has the same properties as the fibres in the generic case, with only one difference: the singular point has multiplicity 3.

We next we consider the fibration $\pi_5:Z_5\to \AAA^4$.
If $d=0$ then the fibre $\pi_5^{-1}(P)$ over the point $P=(a,b,c,d)$ is a double smooth quadric and is therefore non-reduced.
In the opposite case $d\neq0$, the generic case occurs if and only if 
$\iota := ab^2 d^{2}\neq 1$.
In other words, if $\iota \neq 1$ then the fibre is a plane rational quartic curve satisfying the following conditions: it has a unique singular point, which has a unique tangent line of multiplicity 2; all its tangent lines intersect in a unique common point; the tangent lines at the non-singular points are bitangents.
If $\iota = 1$, then the fibre has the same properties as in the generic case, the only difference being that the multiplicity of the singular point becomes 3.

The total space $Z_4$ of the fibration $\pi_4:Z_4\to \AAA^4$ is smooth. 
The total spaces $Z_3$ and $Z_5$ of $\pi_3:Z_3\to \AAA^3$ and $\pi_5:Z_5\to \AAA^4$, on the other hand, are not smooth, but they become smooth after restricting the bases $\AAA^3$ and $\AAA^4$ to the dense open subsets $\{ bc \neq 0\}$ and $\{ b\neq 0 \}$ respectively.

Combining Theorem~\ref{2024_05_29_21:05} with Theorems 5.1 and 5.2 in \cite{HiSt23} we obtain five universal fibrations from which every fibration by plane rational quartic curves arises as a base extension.


We end this section by showing that the total spaces $Z_i$ are uniruled.
This will be a consequence of a more general result, valid in any characteristic $p>0$.

\begin{prop}\label{2024_09_26_19:35}
Let $k$ be an algebraically closed ground field of characteristic $p>0$.
Let $T\to B$ be a dominant morphism of integral varieties whose generic fibre $C=T_\eta$ is a regular geometrically integral curve over $K=k(B)$.
Assume that for some $n$ the Frobenius pullback $C^{(p^n)}|K$ is rational.
Then the total space $T$ is (inseparably) uniruled.
If in addition the base $B$ is rational then $T$ is (inseparably) unirational.
\end{prop}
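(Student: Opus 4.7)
The plan is to translate the rationality of $C^{(p^n)}$ into an inclusion of function fields that displays $T$ as dominated by a product with $\mathbb{P}^1$. Writing $F=k(T)=K(C)$ with $K=k(B)$, the hypothesis that $C^{(p^n)}$ is rational over $K$ says that the $n$-th Frobenius pullback $F_n=KF^{p^n}$ equals $K(t)$ for some transcendental $t$. Since $F^{p^n}\subset K(t)\subset F$, the extension $F/K(t)$ is purely inseparable, and extracting $p^n$-th roots inside an algebraic closure of $F$ yields $F\subset K(t)^{1/p^n}$. An elementary characteristic-$p$ computation identifies $K(t)^{1/p^n}=K^{1/p^n}(s)$ with $s=t^{1/p^n}$.

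Because $k$ is algebraically closed and hence perfect, $K^{1/p^n}$ is a finitely generated field extension of $k$ of transcendence degree equal to $\dim B$, so it is the function field of some integral $k$-variety $\tilde B$, equipped with a natural purely inseparable morphism $\tilde B\to B$ corresponding to the inclusion $K\hookrightarrow K^{1/p^n}$. Consequently $K^{1/p^n}(s)=k(\tilde B\times\mathbb{P}^1)$, and the inclusion $F\hookrightarrow k(\tilde B\times\mathbb{P}^1)$ constructed above is a $k$-algebra homomorphism, hence corresponds to a dominant rational $k$-morphism $\tilde B\times\mathbb{P}^1\dashrightarrow T$, which by definition witnesses that $T$ is inseparably uniruled. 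For the second assertion, if $B$ is rational we may write $K=k(y_1,\ldots,y_m)$; perfectness of $k$ then yields $K^{1/p^n}=k(y_1^{1/p^n},\ldots,y_m^{1/p^n})$, again purely transcendental over $k$. Hence $K^{1/p^n}(s)$ is purely transcendental over $k$ of transcendence degree $\dim T$, so $\tilde B\times\mathbb{P}^1$ is rational and the same map exhibits $T$ as inseparably unirational.

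I do not foresee a substantial obstacle; the entire argument is a sequence of formal manipulations with purely inseparable extensions. The step requiring the most care is the identification $K(t)^{1/p^n}=K^{1/p^n}(s)$ together with the realization of $K^{1/p^n}$ as the function field of a $k$-variety of the expected dimension. Both use the perfectness of $k$ essentially: without it, $K^{1/p^n}$ would acquire extra transcendence from $k^{1/p^n}$, the constructed map would only be defined up to a Frobenius twist of the base field, and the resulting statement about uniruledness would require reformulation.
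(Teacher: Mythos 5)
Your proposal is correct and follows essentially the same route as the paper: both arguments untwist the Frobenius by passing to the purely inseparable base extension $K^{1/p^n}$, over which the rational Frobenius pullback becomes a genuine rational generic fibre, yielding a purely inseparable dominant map from a ruled variety onto $T$. You phrase this purely field-theoretically via $F\subseteq K(t)^{1/p^n}=K^{1/p^n}(s)$, whereas the paper realizes $K^{1/p^n}$ as the function field of an explicit scheme $B'$ and forms $T'=T\times_B B'$, but the underlying mechanism is identical.
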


By specializing $n=3$ we obtain the uniruledness of the varieties $Z_i$.
The cases $n=1$ and $n=2$ imply the well-known fact that quasi-elliptic surfaces are uniruled (see e.g. \cite[Theorem~9.4]{Lied13} or \cite[Corollary~4.1.16]{CDL23}).

\begin{rem}
According to \cite[Lemma~1.2]{Sc09} or \cite[Corollary~2.7]{HiSt22}, the assumption that $C^{(p^n)}|K$ is rational for some $n$ is equivalent to the condition that the curve $C$ is geometrically rational, i.e., the geometric generic fibre $C_{\ov K}=C\otimes_K \ov K$ is a rational curve over $\ov K$, i.e., most of the fibres of $T\to B$ are rational curves.
\end{rem}

\begin{proof}

Note first that the generic fibre $C$ of the fibration $T\to B$ has function field
\[ F := K(C) = k(T). \]
If $n=0$ then by assumption the generic fibre $C|K$ is rational, say $F=K(t)$ for some $t$ transcendental over $K$, i.e., $k(T)=k(B)(t)$, and so the total space $T$ is  birationally equivalent to $B\times \PP^1$, i.e., it is ruled over $B$. 

Let us look at the case $n>0$.
Let $B'$ be the integral $\Spec(k)$-scheme whose underlying topological space is equal to $B$ and whose spaces of sections are the sub-$k$-algebras of $\ov K$ defined by
$ \OO_{B'}(U) := \OO_B(U)^{p^{-n}}$, with $U\su B$ open.
This scheme has function field $k(B') = K' := K^{p^{-n}}$, and it is isomorphic, as an abstract scheme, to $B$ via the $n$th absolute Frobenius morphism $B \to B'$ that raises sections to their $p^n$-powers.
However, $B$ and $B'$ are not necessarily isomorphic as $\Spec(k)$-schemes.
In fact $B'$ is, up to isomorphism, the $\Spec(k)$-scheme
$ B \to \Spec(k) \to \Spec(k) $
whose structure morphism has been modified by composing with the $n$th absolute Frobenius morphism $F_k^n : \Spec(k) \to \Spec(k)$.

The inclusion homomorphisms $\OO_B(U)\to \OO_{B'}(U)$ define a purely inseparable $\Spec(k)$-morphism
$ B' \to B$
of degree $[K':K]=p^{n \dim(B)}$.
In turn we get an extended fibration $T':= T \times_B B'\to B'$, whose total space $T'$ has function field 
\[ F' := k(T') = F K' = F K^{p^{-n}}. \]
In particular the morphism $T' \to T$ is purely inseparable of degree $[F':F]=[K':K]=p^{n \dim (B)}$.
Moreover the function field $F'|K'$ of $T'\to B'$ is isomorphic to the function field $F^{p^n} {\cdot} K|K$ of the Frobenius pullback $C^{(p^n)}|K$, via the $n$th absolute Frobenius homomorphism.
It follows that the function field $F'|K'$ is rational by assumption, and as in the case $n=0$ we conclude that $T'$ is birationally equivalent to $B'\times \PP^1$.
Thus the composition 
$ B'\times \PP^1 \ttto T' \to T $ is a dominant purely inseparable rational map of degree $p^{n \dim(B)}$,
thereby proving that $T$ is (inseparably) uniruled. 

To complete the proof it remains to note that if $B$ is rational then so is $B'$.
\end{proof}

For a similar proof in terms of classical algebraic geometry we refer to \cite[Section~5]{HiSt23}.
Note that since $T$ is \emph{inseparably} uniruled, it does not necessarily have negative Kodaira dimension (see \cite[p.\,265]{Lied13}).

Since the normalized Frobenius pullbacks $C_1|K$ of the curves $C|K$ in the preceding section are quasi-elliptic, the proof of the proposition yields the following consequence for surfaces (compare \cite[Corollary~4.1.16]{CDL23}).
As before, we assume that the algebraically closed ground field $k$ has characteristic $p=2$.

\begin{cor}\label{2025_10_10_16:30}
Every smooth surface admitting a fibration by rational quartic curves admits a purely inseparable cover of degree $p=2$ by a quasi-elliptic surface.
\end{cor}

\section{A pencil of rational quartics in characteristic two}
\label{2024_06_02_00:30}

In this section we discuss the pencil of rational quartics obtained from Theorem~\ref{2024_05_27_00:10}~\ref{2024_05_27_00:12}, by specializing $a=c=0$.

\medskip

Let $k$ be an algebraically closed ground field of characteristic two.
We consider the integral projective algebraic surface
\[ S\su \PP^2 \times \PP^1 \]
of the pairs $((x:y:z),(t_0:t_1))$ that satisfy the bihomogeneous equation
\[ t_0 (y^4 + x z^3) + t_1 x^3 z = 0. \]
This defines a pencil of plane projective quartic curves, whose base points (i.e., the common points of its members) are equal to $(1:0:0)$ and $(0:0:1)$.
For each point of the form $(1:c)$ in $\PP^1$, the corresponding member is the plane projective integral curve cut out by the equation
\[ y^4 + xz^3+ c x^3 z = 0. \]
The only singular point $(1:0:c^{1/2})$ of the curve is unibranch of multiplicity $2$ (if $c \neq 0$) or $3$ (if $c = 0$).
The tangent line at the singular point as well as the tangent line at each non-singular point does not intersect the curve at any other point.
Thus the non-singular points of the quartic curve are non-ordinary inflection points.
Moreover the curve is strange, because its tangent lines pass through the common point $(0:1:0)$.
The member of the pencil corresponding to the point $(0:1)$, on the other hand, degenerates to the non-reduced reducible curve $V(x z^3)$ consisting of the line $V(x)$ and the triple line $V(z^3)$.

By the Jacobian criterion, the surface $S\subset \PP^2 \times \PP^1$ has exactly two singular points, namely $P:=((1:0:0),(1:0))$ and $Q:=((0:0:1),(0:1))$. The fibres of the second projection morphism
\[ S\tto \PP^1 \]
are up to isomorphism the members of the above pencil.
The first projection
\[ S\tto \PP^2 \]
is a birational morphism whose inverse
\[ \PP^2 \ttto S, \quad (x:y:z) \mapsto \big( (x:y:z),(x^3 z : y^4 + x z^3) \big) \]
is not defined only at the base points $(1:0:0)$ and $(0:0:1)$.
More precisely, the morphism $S \to \PP^2$ contracts the horizontal lines $(1:0:0) \times \PP^1$ and $(0:0:1) \times \PP^1$ to the base points $(1:0:0)$ and $(0:0:1)$, and restricts to an isomorphism
\[ S \setminus ( (1:0:0) \times \PP^1 \cup (0:0:1) \times \PP^1 )  \overset \sim \tto \PP^2 \setminus \{ (1:0:0),(0:0:1) \}. \]
The inverse map $\PP^2 \ttto S \subset \PP^2 \times \PP^1$ is equal to $(\textrm{id},\tau)$, where $\tau$ is the rational map $\PP^2 \ttto S \to \PP^1$, i.e.,
\[ \tau: \PP^2 \ttto \PP^1, \quad (x:y:z) \mapsto (x^3 z : y^4 + x z^3), \]
which is also undefined only at $(1:0:0)$ and $(0:0:1)$.

To resolve the singularities of the rational surface $S\subset \PP^2 \times \PP^1$ we resolve the indeterminacies of the rational map $\tau:\PP^2 \ttto \PP^1$. 
This provides us with a commutative diagram
\[ 
\begin{tikzcd}
    \wt S \ar[d,"\lam"]\ar[dr,"\wt \tau", bend left = 20] \\
    \PP^2 \ar[r,dashed,"\tau"] & \PP^1 
\end{tikzcd} 
\]
where $\wt S$ is a smooth integral projective surface and where $\lam: \wt S \to \PP^2$ and $\wt \tau: \wt S \to \PP^1$ are morphisms \cite[p.\,263, Theorem~4.8]{Shaf13a}.
This pair of morphisms defines the desingularization morphism
\[ (\lam,\wt \tau): \wt S \tto S \subset \PP^2 \times \PP^1. \]
For a more standard treatment of the singularities of a similar fibration we refer to \cite[Section 6]{HiSt23}.
As the intersection multiplicities of the two members $V(y^4 + xz^3)$ and $V(x^3 z)$ at the base points $(1:0:0)$ and $(0:0:1)$ are equal to $4$ and $12$, the morphism $\lam: \wt S \to \PP^2$ is obtained by a chain of 4 blowups over $(1:0:0)$ and 12 blowups over $(0:0:1)$, as follows from \cite[p.\,262, Corollary~4.3]{Shaf13a}.
More precisely, computation shows that the exceptional curves (i.e., the birational transforms of the exceptional lines), say $E_i$ ($i=1,\dots,4$) and $F_i$ ($i=1,\dots,12$), intersect transversely according to the Dynkin diagrams $A_4$ and $A_{12}$, i.e.,
\[
\centering
\begin{tikzpicture}[line cap=round,line join=round,x=0.8cm,y=0.8cm]
\draw [line width=1.2pt] (-7.,0.)-- (-4.,0.);
\begin{scriptsize}
\draw [fill=black] (-7,0) circle (2pt);
\draw[color=black] (-7,0.5) node {$E_{1}$};    
\draw [fill=black] (-6,0) circle (2pt);
\draw[color=black] (-6,0.5) node {$E_{2}$};
\draw [fill=black] (-5,0) circle (2pt);
\draw[color=black] (-5,0.5) node {$E_3$};
\draw [fill=black] (-4,0) circle (2pt);
\draw[color=black] (-4,0.5) node {$E_4$};
\end{scriptsize}
\end{tikzpicture}
\]
and
\[
\centering
\begin{tikzpicture}[line cap=round,line join=round,x=0.8cm,y=0.8cm]
\draw [line width=1.2pt] (-7.,0.)-- (4.,0.);
\begin{scriptsize}
\draw [fill=black] (-7,0) circle (2pt);
\draw[color=black] (-7,0.5) node {$F_{1}$};    
\draw [fill=black] (-6,0) circle (2pt);
\draw[color=black] (-6,0.5) node {$F_{2}$};
\draw [fill=black] (-5,0) circle (2pt);
\draw[color=black] (-5,0.5) node {$F_3$};
\draw [fill=black] (-4,0) circle (2pt);
\draw[color=black] (-4,0.5) node {$F_4$};
\draw [fill=black] (-3,0) circle (2pt);
\draw[color=black] (-3,0.5) node {$F_5$};
\draw [fill=black] (-2,0) circle (2pt);
\draw[color=black] (-2,0.5) node {$F_6$};
\draw [fill=black] (-1,0) circle (2pt);
\draw[color=black] (-1,0.5) node {$F_7$};
\draw [fill=black] (0,0) circle (2pt);
\draw[color=black] (0,0.5) node {$F_8$};
\draw [fill=black] (1,0) circle (2pt);
\draw[color=black] (1,0.5) node {$F_9$};
\draw [fill=black] (2,0) circle (2pt);
\draw[color=black] (2,0.5) node {$F_{10}$};
\draw [fill=black] (3,0) circle (2pt);
\draw[color=black] (3,0.5) node {$F_{11}$};
\draw [fill=black] (4,0) circle (2pt);
\draw[color=black] (4,0.5) node {$F_{12}$};
\end{scriptsize}
\end{tikzpicture}.
\]
Moreover $E:=E_4$ and $F:= F_{12}$ are horizontal curves, namely the birational transforms of the horizontal lines $(1:0:0) \times \PP^1$ and $(0:0:1) \times \PP^1$ under the desingularization morphism $(\lam,\wt \tau): \wt S \to S \subset \PP^2 \times \PP^1$.
The self-intersection numbers are equal to
\begin{align*}
    E_i \cdot E_i &= -2 \quad (i=1,2,3), \quad E \cdot E = -1, \\
    F_i \cdot F_i &= -2 \quad (i = 1,\dots,11), \quad F \cdot F = -1.
\end{align*}
The fibres of the morphism $\wt \tau: \wt S \to \PP^1$ over $(1:0)$ and $(0:1)$ contain the exceptional curves $E_i$ except $E_4 = E$, and $F_i$ except $F_{12}=F$, respectively.
Hence the singular points $P$ and $Q$ on the surface $S$ are rational double points of type $A_3$ and $A_{11}$, respectively.

We denote by $W$, $X$ and $Z$ the birational transforms of the plane projective curves $V(y^4 + xz^3)$, $V(x)$ and $V(z)$ under the morphism $\lam: \wt S \to \PP^2$.
Note that $W$, $X$ and $Z$ are smooth rational curves; indeed the singularity of $V(y^4 + xz^3)$ is resolved by the first blowup over $(1:0:0)$.

The computations also show that the fibre of $\wt \tau$ over the point $(1:0)$ is the Weil divisor
\begin{equation}\label{2024_07_22_13:30}
    \wt \tau^* (1:0) = W + 2 E_1 + 2 E_2 + E_3,
\end{equation}
where the components intersect according to the configuration
\[
\begin{scriptsize}
\begin{tikzpicture}[line cap=round,line join=round,x=0.7cm,y=0.7cm]
\draw plot [smooth,tension=1] coordinates {(-1.5,-2) (0,0) (-1.5,2)}; \draw (-1.5,2) node[left] {$W$};
\draw (-1.5,0) -- (3,0) node[right] {$E_2$};
\draw (0,-2) -- (0,2) node[above] {$E_1$};
\draw (2,-2) -- (2,2) node[above] {$E_3$};

\end{tikzpicture}
\end{scriptsize}.
\]
The intersection matrix of $W$, $E_1$, $E_2$, $E_3$ is equal to
\[ \left( \begin{array}{cccc} 
-6 & 2 & 1 & 0  \\
2 & -2 & 1 & 0 \\
1 & 1 & -2 & 1 \\
0 & 0 & 1 & -2 \\
\end{array} \right) \]
where the self-intersection numbers can be obtained from the property that a fibre meets each of its components with intersection number zero.

The fibre of $\wt \tau$ over the point $(0:1)$ is the Weil divisor
\begin{equation}\label{2024_07_22_13:35}
\begin{aligned}
    \wt \tau^*(0:1) &= 3X + Z + 2 F_1 + 4 F_2 + 6 F_3 + 8 F_4 + 7 F_5 + 6 F_6  \\
    &\qquad + 5 F_7 + 4 F_8 + 3 F_9 + 2 F_{10} + F_{11}
\end{aligned}
\end{equation}
whose components intersect transversely according to the Coxeter-Dynkin diagram
\[
\begin{tikzpicture}[line cap=round,line join=round,x=0.8cm,y=0.8cm]
\draw [line width=1.2pt] (-7.,0.)-- (3.,0.);
\draw [line width=1.2pt] (-4.,0.)-- (-4.,-2.);
\begin{scriptsize}
\draw [fill=black] (-7,0) circle (2pt);
\draw[color=black] (-7,0.5) node {$F_{1}$};    
\draw [fill=black] (-6,0) circle (2pt);
\draw[color=black] (-6,0.5) node {$F_{2}$};
\draw [fill=black] (-5,0) circle (2pt);
\draw[color=black] (-5,0.5) node {$F_3$};
\draw [fill=black] (-4,0) circle (2pt);
\draw[color=black] (-4,0.5) node {$F_4$};
\draw [fill=black] (-4,-1.) circle (2pt);
\draw[color=black] (-4.5 + 1,-1) node {$X$};
\draw [fill=black] (-4.,-2.) circle (2pt);
\draw[color=black] (-4.5 + 1,-2) node {$Z$};
\draw [fill=black] (-3,0) circle (2pt);
\draw[color=black] (-3,0.5) node {$F_5$};
\draw [fill=black] (-2.,0.) circle (2pt);
\draw[color=black] (-2.,.5) node {$F_6$};
\draw [fill=black] (-1.,0.) circle (2pt);
\draw[color=black] (-1.,.5) node {$F_7$};
\draw [fill=black] (0.,0.) circle (2pt);
\draw[color=black] (0.,.5) node {$F_8$};
\draw [fill=black] (1.,0.) circle (2pt);
\draw[color=black] (1.,0.5) node {$F_9$};
\draw [fill=black] (2.,0.) circle (2pt);
\draw[color=black] (2.,.5) node {$F_{10}$};
\draw [fill=black] (3.,0.) circle (2pt);
\draw[color=black] (3.,0.5) node {$F_{11}$};
\end{scriptsize}
\end{tikzpicture}
\]
Moreover $X\cdot X = Z \cdot Z = -3$.
Note also that $W \cdot F = Z \cdot E = 1$.

To summarize we put the curves appearing in the discussion of the bad fibres $\wt \tau^*(1:0)$ and $\wt \tau^*(0:1)$ into a unique configuration in Figure~\ref{2024_07_23_20:15}.
\begin{figure}[h]

\centering
\begin{tikzpicture}[line cap=round,line join=round,x=0.7cm,y=0.7cm]

\begin{scriptsize}


\draw (-6,0 - .6) -- (-6,2 + .6) ;  \draw[color=black] (-6,2 + .6) node[above] {$F_{11}$}; 
\draw (-4,0 - .6) -- (-4,2 + .6) ;  \draw[color=black] (-4,2 + .6) node[above] {$F_9$}; 
\draw (-2,0 - .6) -- (-2,2 + .6) ;  \draw[color=black] (-2,2 + .6) node[above] {$F_7$}; 
\draw (0,0 - .6) -- (0,2 + .6) ;  \draw[color=black] (0,2 + .6) node[above] {$F_5$}; 
\draw (2,0 - .6) -- (2,2 + .6) ;  \draw[color=black] (2,2 + .6) node[above] {$F_3$}; 
\draw (4,0 - .6) -- (4,2 + .6) ;  \draw[color=black] (4,2 + .6) node[above] {$F_1$}; 

\draw (-6- .6,2) -- (-4 + .6,2) ; \draw[color=black] (-5,2 + .0) node[above] {$F_{10}$}; 
\draw (-4- .6,0) -- (-2 + .6,0) ;  \draw[color=black] (-3,0 + .0) node[above] {$F_8$}; 
\draw (-2- .6,2) -- (0 + .6,2) ;  \draw[color=black] (-1,2 + .0) node[above] {$F_6$}; 
\draw (0- .6,0) -- (2 + .6,0) ;  \draw[color=black] (2 + .6,0) node[right] {$F_4$};
\draw (2- .6,2) -- (4 + .6,2) ;  \draw[color=black] (3,2 + .0) node[above] {$F_2$}; 

\draw (1,0.5) -- (2.5,-2); \draw[color=black] (1,0.5) node[above] {$X$};
\draw (1.5,-1.5) -- (6.1,-1.5); \draw[color=black] (6.1,-1.5) node[right] {$Z$}; 


\draw (-5 +3,-5 - .6) -- (-5 +3,-3 + .6) ; \draw[color=black] (-5 +3,-5 - .6) node[below] {$E_1$}; 
\draw (-2 +3,-5 - .6) -- (-2 +3,-3 + .6) ; \draw[color=black] (-2 +3,-5 - .6) node[below] {$E_3$}; 
\draw (-5- .6 +3,-4) -- (-2 + .6 +3,-4) ;  \draw[color=black] (-5 +3 + 1.5,-4) node[above] {$E_2$};
\draw plot [smooth,tension=1] coordinates {(-2 -5 +3,-1.5 -4) (0-5 +3,0-4) (-2 -5 +3,1.5 -4)}; \draw (-2 -5 +3,-1.5 -4) node[left] {$W$};

\draw (-6 - .6/2,0 + .6) -- (-3 - .2/2,-6 + .2); \draw[color=black] (-4.5,-3) node[left] {$F$}; 
\draw (1 - .6,-5 - .6*4/5) -- (6,-1); \draw[color=black] (3.5,-3) node[below] {$E$};

\end{scriptsize}
\end{tikzpicture}
\caption{Configuration of curves on $\wt S$}\label{2024_07_23_20:15}
\end{figure}
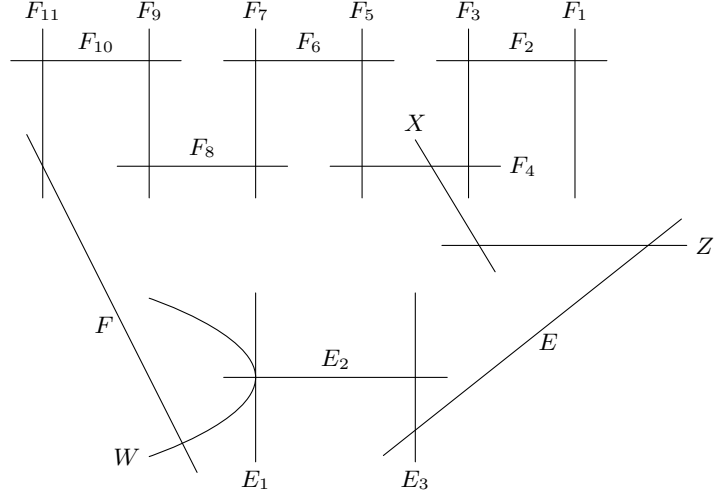

The remaining fibres of $\wt \tau$ are integral curves of self-intersection number zero.
As the fibres of $\wt \tau$ do not contain curves of self-intersection $-1$, that is, curves that are contractible according to Castelnuovo's contractibility criterion, we deduce that the fibration $\wt \tau: \wt S \to \PP^1$ is a relatively minimal model and hence by a theorem of Lichtenbaum--Shafarevich (see \cite[Theorem~4.4]{Lic68}, \cite[p.\,155]{Sha66}, or \cite[p.\,422]{Liu02}) the (unique) minimal model of the function field $F|K = k(S)|k(\PP^1)$.
However, as the two horizontal curves $E=E_4$ and $F=F_{12}$ are contractible, the smooth surface $\wt S$ is not relatively minimal over $\Spec(k)$.

Since the Frobenius pullback $F_1|K$ of the function field $F|K = k(\wt S)|k(\PP^1)$ is quasi-elliptic, it follows that the fibration $\wt \tau: \wt S \to \PP^1$ is an inseparable covering of degree two of a quasi-elliptic fibration.
In the remaining of this section we describe this covering.

Let $S' \su \PP^2 \times \PP^1$ be the integral projective algebraic surface of the pairs $((u:v:w),(t_0:t_1))$ satisfying the bihomogeneous equation
\[ t_0 (u v^2 + w^3) + t_1 u^2 w = 0. \]
This defines a pencil of plane projective cubic curves, whose base points are equal to $(1:0:0)$ and $(0:1:0)$.
The members of this pencil are up to isomorphisms the fibres of the second projection morphism $S\to \PP^1$.
These fibres are cuspidal cubic curves, except the fibre over the point $(0:1)$, which consists of the line $V(w)$ and the double line $V(u^2)$.

The surface $S' \su \PP^2 \times \PP^1$ has precisely two singularities at the points $P' = ((1:0:0),(1:0))$ and $Q'=((0:1:0),(0:1))$.
The first projection $S' \to \PP^2$ is a birational morphism, whose inverse is the rational map
\[ (\mathrm{id},\tau'):\PP^2 \ttto S' \subset \PP^2 \times \PP^1 \]
where $\tau':\PP^2 \ttto \PP^1$, $(u:v:w) \mapsto (u^2 w: uv^2 + w^3)$, is undefined only at the base points $(1:0:0)$ and $(0:1:0)$.
Resolving the indeterminacy of $\tau'$ we obtain the commutative diagram
\[
\begin{tikzcd}
    \wt S' \ar[d,"\lam'"] \ar[dr,"\wt \tau'", bend left=20] \\
    \PP^2 \ar[r,dashed,"\tau'"] & \PP^1
\end{tikzcd}
\]
where $\wt S'$ is a smooth integral projective surface and where $\lam'$ and $\wt \tau'$ are morphisms, which provide a desingularization morphism
\[ (\lam',\wt \tau') : \wt S' \tto S' \subset \PP^2 \times \PP^1 \]
of the variety $S'$.
The morphism $\lam':\wt S' \to \PP^2$ is obtained by a chain of 2 blowups over $(1:0:0)$ and $7$ blowups over $(0:1:0)$.
The corresponding exceptional curves $E_i'$ ($i=1,2$) and $F_i'$ ($i=1,\dots,7$) on $\wt S'$ intersect according to the Dynkin diagrams $A_2$ and $A_7$, respectively.
Moreover, $E':= E_2'$ and $F':= F_7'$ are horizontal curves of self-intersection number $-1$, namely the birational transforms of the horizontal lines $(1:0:0)\times \PP^1$ and $(0:1:0)\times \PP^1$ under the desingularization morphism $\wt S' \to S'$.
The singular points $P'$ and $Q'$ on the surface $S'$ are rational double points of type $A_1$ and $A_6$, respectively.

We denote by $W'$, $X'$ and $Z'$ the birational transforms of the plane projective curves $V(uv^2 + w^3)$, $V(u)$ and $V(w)$.
The fibres of the morphism $\wt \tau' : \wt S' \to \PP^1$ over the points $(1:0)$ and $(0:1)$ are of type $\wt A_1^*$ and $\wt E_7$ respectively (see \cite[Theorem~4.1.4 and Corollary~4.3.22]{CDL23}).
More precisely, these fibres are given by
\[ \wt \tau'^* (1:0) = W' + E_1' \]
where $W'$ and $E_1'$ meet with intersection number $2$ at only one point, and
\[ \wt \tau'^*(0:1) = 2 X' + Z' + 2 F'_1 + 3 F'_2 + 4 F'_3 + 3 F'_4 + 2 F'_5 + F'_6  \]
where the components intersect transversely according to the diagram
\[
\begin{tikzpicture}[line cap=round,line join=round,x=0.8cm,y=0.8cm]
\draw [line width=1.2pt] (-7.,0.)-- (-1.,0.);
\draw [line width=1.2pt] (-4.,0.)-- (-4.,-1.);
\begin{scriptsize}
\draw [fill=black] (-7,0) circle (2pt);
\draw[color=black] (-7,0.5) node {$Z'$};    
\draw [fill=black] (-6,0) circle (2pt);
\draw[color=black] (-6,0.5) node {$F_1'$};
\draw [fill=black] (-5,0) circle (2pt);
\draw[color=black] (-5,0.5) node {$F_2'$};
\draw [fill=black] (-4,0) circle (2pt);
\draw[color=black] (-4,0.5) node {$F_3'$};
\draw [fill=black] (-4,-1.) circle (2pt);
\draw[color=black] (-4.5 + 1,-1) node {$X'$};
\draw [fill=black] (-3,0) circle (2pt);
\draw[color=black] (-3,0.5) node {$F_4'$};
\draw [fill=black] (-2.,0.) circle (2pt);
\draw[color=black] (-2.,.5) node {$F_5'$};
\draw [fill=black] (-1.,0.) circle (2pt);
\draw[color=black] (-1.,.5) node {$F_6'$};
\end{scriptsize}
\end{tikzpicture}
\]
The self-intersection numbers of the curves $E_1',F_1',\dots,F_5',W',X',Z'$ are equal to $-2$.
The remaining fibres of $\wt \tau'$ are integral curves of self-intersection number zero.
Thus the fibration $\wt S' \to \PP^1$ is the minimal model of the Frobenius pullback $F_1|K$ of the function field $F|K$.

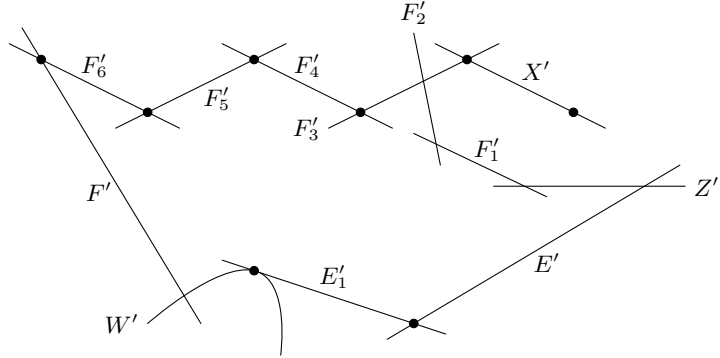
\begin{figure}[h]

\centering
\begin{tikzpicture}[line cap=round,line join=round,x=0.7cm,y=0.7cm]

\begin{scriptsize}

\draw (-8 -.6 +2,-4 + .6/2 -6) -- (-6 + .6 +2,-5 - .6/2 -6) ;  \draw[color=black] (-5,-10.5) node[above] {$F_6'$};
\draw (-4 -.6 +2,-4 + .6/2 -6) -- (-2 + .6 +2,-5 - .6/2 -6) ;  \draw[color=black] (-1,-10.5) node[above] {$F_4'$};
\draw (0 -.6 +2,-4 + .6/2 -6) -- (2 + .6 +2,-5 - .6/2 -6) ;  \draw[color=black] (3 + .3,-10.5 - .3/2 + .1) node[above] {$X'$};

\draw (-4 + .6 +2,-4 + .6/2 -6) -- (-6 - .6 +2,-5 - .6/2 -6) ;  \draw[color=black] (-3 + .3,-10.5 + .3/2) node[below] {$F_5'$}; 
\draw (0 + .6 +2,-4 + .6/2 -6) -- (-2 - .6 +2,-5 - .6/2 -6) ;  \draw[color=black] (-2 - .6 +2,-5 - .6/2 -6) node[left] {$F_3'$}; 

\draw (1,-9.5) -- (1.5,-12); \draw (1,-9.5) node[above] {$F_2'$};
\draw (1,-11.4) -- (3.5,-12.6); \draw (2.8,-11.7) node[left] {$F_1'$};
\draw (2.5,-12.4) -- (6 + .1,-12.4); \draw[color=black] (6 + .1,-12.4) node[right] {$Z'$};


\draw (-2 -.6 ,-14 + .6/3) -- (1 + .6,-15 - .6/3) ;  \draw[color=black] (-.5,-14.5) node[above] {$E_1'$};
\draw [fill=black] (-2,-14) circle (1.6pt);
\draw [fill=black] (1,-15) circle (1.6pt);

\draw plot [smooth,tension=1] coordinates {(-4,-15) (-2,-14) (0 -1.5,-16+.4)}; \draw (-4,-15) node[left] {$W'$};

\draw [fill=black] (-8 +2,-4 -6) circle (1.6pt);
\draw [fill=black] (-6 +2,-5 -6) circle (1.6pt);
\draw [fill=black] (-4 +2,-4 -6) circle (1.6pt);
\draw [fill=black] (-2 +2,-5 -6) circle (1.6pt);
\draw [fill=black] (0 +2.,-4 -6) circle (1.6pt);
\draw [fill=black] (2 +2,-5 -6) circle (1.6pt);

\draw (-6 - .6*3/5,-10 + .6) -- (-3,-15); \draw[color=black] (-4.5,-12.5) node[left] {$F'$};
\draw (1 - .5,-15 - .5*3/5) -- (6,-12); \draw[color=black] (3.5,-13.5) node[below] {$E'$};

\end{scriptsize}
\end{tikzpicture}
\caption{Configuration of curves on $\wt S'$}\label{2024_07_23_20:20}
\end{figure}

We finally explain how the fibration $f:\wt S\to \PP^1$ covers the quasi-elliptic fibration $f':\wt S'\to \PP^1$. To this end we consider the commutative diagram
\[
\begin{tikzcd}
    S \ar[d,dashed] \ar[r] & \PP^2 \ar[d,dashed] \\
    S' \ar[r] & \PP^2 
\end{tikzcd}
\]
where the dashed arrows represent the rational maps
\[ ((x:y:z),(t_0:t_1)) \mapsto ((x^2:y^2:xz),(t_0:t_1)) \quad \text{and} \quad (x:y:z)\mapsto (x^2 : y^2 : xz). \]
These maps are undefined only at the points $Q\in S$ and $(0:0:1)\in \PP^2$ respectively.
We analyse how the corresponding rational map
\[ \wt S \ttto \wt S' \]
transforms the fibres over $\PP^1$.
The fibres of $\wt \tau: \wt S \to \PP^1$ over the points different from $(1:0)$ and $(0:1)$ are applied to the corresponding fibres of $\wt \tau' : \wt S' \to \PP^1$ by the quadratic transformation $(x:y:z)\mapsto (x^2 : y^2 : xz)$.
The exceptional curves $E_1$, $E_3$, $F_1$, $F_3$, $F_5$, $F_7$, $F_9$, $F_{11}$ with odd indices are contracted to points,
the exceptional curves $E_2$, $E=E_4$, $F_2$, $F_4$, $F_6$, $F_8$, $F_{10}$, $F=F_{12}$ with even indices are mapped isomorphically onto the curves $E_1'$, $E'=E_2'$, $X'$, $F_3'$, $F_4'$, $F_5'$, $F_6'$, $F'=F_7'$, 
and the curves $X$, $Z$ and $W$ are mapped onto the curves $F_2'$, $Z'$ and $W'$ by inseparable morphisms of degree two.
However, the curve $F_1' \subset \wt S'$ is not covered by any curve on $S$, and so the rational map $\wt S \ttto \wt S'$ is not a morphism.
More precisely, the rational map $\wt S \ttto \wt S'$ is undefined only at the intersection point of $X$ and $Z$. This indeterminacy can be resolved by a unique blowup, whose exceptional line is mapped isomorphically onto the curve $F_1'$.

\begin{bibdiv}
\begin{biblist}
\bib{BedSt87}{article}{
  author={Bedoya, Hernando},
  author={St\"ohr, Karl-Otto},
  title={An algorithm to calculate discrete invariants of singular primes in function fields},
  journal={J. Number Theory},
  volume={27},
  date={1987},
  number={3},
  pages={310--323},
}

\bib{BM76}{article}{
  author={Bombieri, Enrico},
  author={Mumford, David},
  title={Enriques' classification of surfaces in char. $p$. III},
  journal={Invent. Math.},
  volume={35},
  date={1976},
  pages={197--232},
}

\bib{BM77}{article}{
  author={Bombieri, Enrico},
  author={Mumford, David},
  title={Enriques' classification of surfaces in char. $p$. II},
  book={ title={Complex Analysis and Algebraic Geometry}, editor={W. L. Baily}, editor={T. Shioda}, publisher={Iwanami Shoten, Tokyo}, },
  date={1977},
  pages={23--42},
}

\bib{CDL23}{book}{
  author={Cossec, François},
  author={Dolgachev, Igor},
  author={Liedtke, Christian},
  title={Enriques Surfaces I},
  publisher={Springer Nature Singapore},
  date={2025},
  pages={xxi+681 pp.},
}

\bib{JVV24}{article}{
  author={Frei, Sarah},
  author={Ji, Lena},
  author={Sankar, Soumya},
  author={Viray, Bianca},
  author={Vogt, Isabel},
  title={Curve classes on conic bundle threefolds and applications to rationality},
  journal={Algebr. Geom.},
  volume={11},
  date={2024},
  number={3},
  pages={421--459},
}

\bib{HiSt22}{article}{
  author={Hilario, Cesar},
  author={St\"ohr, Karl-Otto},
  title={On regular but non-smooth integral curves},
  journal={J. Algebra},
  volume={661},
  date={2025},
  pages={278--300},
}

\bib{HiSt23}{article}{
  author={Hilario, Cesar},
  author={St\"ohr, Karl-Otto},
  title={Fibrations by plane quartic curves with a canonical moving singularity},
  journal={J. Pure Appl. Algebra},
  volume={229},
  date={2025},
  number={4},
  pages={Paper No. 107918, 24},
}

\bib{Kod63}{article}{
  author={Kodaira, Kunihiko},
  title={On compact analytic surfaces. II, III},
  journal={Ann. of Math. (2)},
  volume={77},
  date={1963},
  pages={563--626; {\bf 78} (1963), 1--40},
}

\bib{Lan79}{article}{
  author={Lang, William E.},
  title={Quasi-elliptic surfaces in characteristic three},
  journal={Ann. Sci. \'Ecole Norm. Sup. (4)},
  volume={12},
  date={1979},
  number={4},
  pages={473--500},
}

\bib{Lic68}{article}{
  author={Lichtenbaum, Stephen},
  title={Curves over discrete valuation rings},
  journal={Amer. J. Math.},
  volume={90},
  date={1968},
  pages={380--405},
}

\bib{Lied13}{article}{
  author={Liedtke, Christian},
  title={Algebraic surfaces in positive characteristic},
  conference={ title={Birational geometry, rational curves, and arithmetic}, },
  book={ series={Simons Symp.}, publisher={Springer, Cham}, },
  date={2013},
  pages={229--292},
}

\bib{Liu02}{book}{
  author={Liu, Qing},
  title={Algebraic geometry and arithmetic curves},
  series={Oxford Graduate Texts in Mathematics},
  volume={6},
  publisher={Oxford University Press, Oxford},
  date={2002},
  pages={xvi+576 pp.},
}

\bib{Ner64}{article}{
  author={N\'eron, Andr\'e},
  title={Mod\`eles minimaux des vari\'et\'es ab\'eliennes sur les corps locaux et globaux},
  journal={Inst. Hautes \'Etudes Sci. Publ. Math.},
  date={1964},
  number={21},
  pages={128 pp.},
}

\bib{Pro18}{article}{
  author={Prokhorov, Yuri},
  title={The rationality problem for conic bundles},
  journal={Russian Math. Surveys},
  volume={73},
  date={2018},
  number={3},
  pages={375--456},
}

\bib{Sal11}{article}{
  author={Salom\~ao, Rodrigo},
  title={Fibrations by nonsmooth genus three curves in characteristic three},
  journal={J. Pure Appl. Algebra},
  volume={215},
  date={2011},
  number={8},
  pages={1967--1979},
}

\bib{Sal14}{article}{
  author={Salom\~ao, Rodrigo},
  title={Fibrations by curves with more than one nonsmooth point},
  journal={Bull. Braz. Math. Soc. (N.S.)},
  volume={45},
  date={2014},
  number={2},
  pages={267--292},
}

\bib{Sc09}{article}{
  author={Schr\"oer, Stefan},
  title={On genus change in algebraic curves over imperfect fields},
  journal={Proc. Amer. Math. Soc.},
  volume={137},
  date={2009},
  number={4},
  pages={1239-1243},
}

\bib{SchSh10}{article}{
  author={Sch\"{u}tt, Matthias},
  author={Shioda, Tetsuji},
  title={Elliptic surfaces},
  conference={ title={Algebraic geometry in East Asia -- Seoul 2008}, },
  book={ series={Adv. Stud. Pure Math.}, volume={60}, publisher={Math. Soc. Japan, Tokyo}, },
  date={2010},
  pages={51--160},
}

\bib{Shaf13a}{book}{
  author={Shafarevich, Igor R.},
  title={Basic Algebraic Geometry 1},
  edition={3},
  publisher={Springer, Heidelberg},
  date={2013},
  pages={xviii+310 pp.},
}

\bib{Sha66}{book}{
  author={Shafarevich, Igor R.},
  title={Lectures on minimal models and birational transformations of two dimensional schemes},
  series={Tata Institute of Fundamental Research Lectures on Mathematics and Physics},
  volume={37},
  note={Notes by C. P. Ramanujam},
  publisher={Tata Institute of Fundamental Research, Bombay},
  date={1966},
  pages={iv+175 pp.},
}

\bib{St04}{article}{
  author={St\"ohr, Karl-Otto},
  title={On Bertini's theorem in characteristic $p$ for families of canonical curves in $\PP ^{(p-3)/2}$},
  journal={Proc. London Math. Soc. (3)},
  volume={89},
  date={2004},
  number={2},
  pages={291--316},
}

\bib{Tate52}{article}{
  author={Tate, John},
  title={Genus change in inseparable extensions of function fields},
  journal={Proc. Amer. Math. Soc.},
  volume={3},
  date={1952},
  pages={400--406},
}
\end{biblist}
\end{bibdiv}

\end{document}